\numberwithin{equation}{section}
\numberwithin{algorithm}{section}
\newtheorem{definition}{Definition}[section]
\newtheorem{theorem}{Theorem}[section]
\newtheorem{lemma}{Lemma}[section]
\newtheorem{corollary}{Corollary}[section]
\newtheorem{problem}{Problem}[section]
\newtheorem{remark}{Remark}[section]
\title{\bf Randomized Algorithms for the Low Multilinear Rank Approximations of Tensors}
\author{Maolin Che\thanks{E-mail: chncml@outlook.com and cheml@swufe.edu.cn. School of
Economic Mathematics, Southwestern University of Finance and Economics, Chengdu, 611130, P. R. of
China. This author is supported by the National Natural Science
Foundation of China under grant 11901471.}
\and Yimin Wei\thanks{E-mail: ymwei@fudan.edu.cn and yimin.wei@gmail.com. School of
Mathematical Sciences and Key Laboratory of Mathematics for
Nonlinear   Sciences,  Fudan University, Shanghai, 200433, P. R. of
China. This author is supported by the National Natural Science
Foundation of China under grant 11771099 and Innovation Program of Shanghai Municipal Education Commission.}
\and Hong Yan\thanks{E-mail: h.yan@cityu.edu.hk. Department of Electrical Engineering, City University of Hong Kong, 83 Tat Chee Avenue, Kowloon, Hong Kong. This author is supported by the Hong Kong Research Grants Council (Project C1007-15G) and and City University of Hong Kong (Project 9610308).}}
\begin{document}

\maketitle

\begin{abstract}
In this paper, we focus on developing  randomized algorithms for the computation of low multilinear rank approximations of tensors based on the random projection and the singular value decomposition.  Following the theory of the singular values of sub-Gaussian matrices, we make a  probabilistic analysis for the error bounds for the randomized algorithm. We demonstrate the effectiveness of proposed algorithms via several numerical examples.
  \bigskip

  {\bf Keywords:} Randomized algorithms; low multilinear rank approximation; sub-Gaussian matrices; singular value decomposition; singular values.
  \bigskip

  {\bf AMS subject classifications:} 15A18, 15A69, 65F15, 65F10
\end{abstract}

\section{Introduction}
\label{SUB:sect1}

A wide range of  applications, such as in chemometrics, signal processing and high order statistics \cite{cichocki2015tensor,nonnegative,tensor_app,Kolda,vervliet2014breaking}, involve the manipulation of quantities with elements addressed by more than two indices. With three indices or more, these higher-order expansions of vectors (first-order) and matrices (second-order) are called higher-order tensors, multidimensional matrices, or multiway arrays.

We use the symbol $\mathcal{A}\in\mathbb{R}^{I_1\times I_2\times I_3}$ to represent a three-dimensional array of real numbers with entries given by
$a_{i_1i_2 i_3}\in\mathbb{R}$ for $i_n=1,2,\dots,I_n$ and $n=1,2,3$. For notational simplicity, we illustrate our results by using third-order tensors whenever generalizations to higher-order cases are straightforward. Slight differences will be explained when needed.

In this paper, we consider the low multilinear rank approximation of a tensor, which is defined as follows.
\begin{problem}\label{SUB:prob1}
	Suppose that $\mathcal{A}\in\mathbb{R}^{I_1\times I_2\times I_3}$. The goal is to require three orthonormal matrices ${\bf Q}_n\in\mathbb{R}^{I_{n}\times \mu_{n}}$ with $\mu_n\leq I_n$, such that
	\begin{equation*}
	a_{i_1i_2i_3}\approx\sum_{j_1,j_2,j_3=1}^{I_1,I_2,I_3}a_{j_1j_2j_3}p_{1,i_1j_1}p_{2,i_2j_2}p_{3,i_3j_3},
	\end{equation*}
	where $\mathbf{P}_n=\mathbf{Q}_n\mathbf{Q}_n^\top\in\mathbb{R}^{I_n\times I_n}$ is a projection matrix and $p_{n,i_nj_n}$ is the $(i_n,j_n)$-element of $\mathbf{P}_n$.
\end{problem}

Problem \ref{SUB:prob1} can be solved by a number of recently developed algorithms, such as higher-order orthogonal iteration \cite{lathauwer_rank_one_2000}, the Newton-Grassmann method \cite{elden_newton_2009}, the Riemannian
trust-region method \cite{trust_region_2011}, the Quasi-Newton method \cite{quasi_app_2010}, semi-definite programming (SDP) \cite{semidefinite_2009}, and Lanczos-type iteration \cite{GOS_SISC_2012,SE_T_2009}. The readers can refer to two  surveys \cite{tensor_survery_Grasedyck,Kolda} for the relevant information. If the columns of each $\mathbf{Q}_n$ are extracted from the mode-$n$ unfolding matrix $\mathbf{A}_{(n)}$, then the solution of Problem \ref{SUB:prob1} is called as the {\it CUR}-type decomposition of $\mathcal{A}$, which can be obtained by the different versions of the cross approximation method.  We  refer to \cite{tensor_cur_2010,tensor_based_2007_laa,GT_SMMCS_2001,tensor_cur_2008,OST_SIMAX_2008,OST_NLAA_2010} for more details about a {\it CUR}-type decomposition of tensors. On the other hand, for Problem \ref{SUB:prob1}, when we restrict the entries of the tensor $\mathcal{A}$ and the matrices $\mathbf{Q}_n$ to be nonnegative and admit the matrices $\mathbf{Q}_n$ not being orthonormal, the solution of Problem \ref{SUB:prob1} is  called a nonnegative Tucker decomposition  \cite{FH_CMS_2008,nonegative_tucker_zhang2016,nonnegative_tucker_zhou2012,nonnegative_tucker_zhou2015}.

Low-rank matrix approximations, such as the truncated singular value decomposition \cite[page 291]{Golub} and the rank-revealing QR decomposition \cite{rank_QR_1987}, play a central role in data analysis and scientific computing. Halko {\it et al.}  \cite{random_2011_siamreview} present a modular framework to construct randomized algorithms for computing partial matrix decompositions. Randomized algorithms for low-rank approximations and their theory have been well established in terms of its near optimality compared to the Eckart-Young theorem. We  recommend three surveys \cite{drineas2016randnla,mahoney2011randomized,woodruff2014sketching} for more details about the randomized algorithms for computing low rank matrix approximations.

Randomized algorithms have recently been applied to tensor decompositions. Drineas and Mahoney \cite{tensor_based_2007_laa} present and analyze randomized algorithms for computing the CUR-type decomposition of a tensor, which can be viewed as the generalization of the Linear-Time-SVD algorithm \cite{Drineas2005fast} and the Fast-Approximate-SVD algorithm \cite{deshpande2006matrix} for the low-rank approximations of matrices to tensors, which are originally for matrices. Battaglino {\it et al.} \cite{Battaglino2017a} extend randomized least squares methods to tensors and show the workload of CANDECOMP/PARAFAC-ALS can be drastically reduced without sacrifice in quality. Vervliet and De Lathauwer \cite{vervliet2016a} present the randomized block sampling canonical polyadic decomposition method, which combines increasingly popular ideas from randomization and stochastic optimization to tackle the computational problems.

Zhou {\it et al.} \cite{decomposition_big_tensor} propose a distributed randomized Tucker decomposition for arbitrarily big
tensors but with relatively low multilinear rank. Che and Wei \cite{che2018randomized} design adaptive randomized algorithms for computing the low multilinear rank approximation of tensors and the approximate tensor train decomposition. More results about this topic can be found in \cite{biagioni2015randomized,tensor_sparsification_2015,tensor_fast_2010} and their references. More recently, many researchers propose randomized algorithms for low multilinear rank approximations \cite{ahmadiasl2020randomized,che2020theory,kressner2017recompression,
minster2019randomized,sun2019low,WCW2020}.

Suppose that $\mu_n$ is a given positive integer and $R$ is the oversampling parameter. In the work of \cite{decomposition_big_tensor}, the column space of each mode-$n$ unfolding of $\mathbf{A}_{(n)}$ is approximated by that of $\mathbf{A}_{(n)}\bm{\Omega}_n$, where $\bm{\Omega}_n\in\mathbb{R}^{I^2\times (\mu_n+R)}$ is a standard Gaussian matrix and $\mathbf{A}_{(n)}$ is the mode-$n$ unfolding of $\mathcal{A}\in\mathbb{R}^{I\times I\times I}$. However, the column space of each mode-$n$ unfolding of $\mathbf{A}_{(n)}$ is approximated by that of $\mathbf{A}_{(n)}\bm{\Omega}_n$ with $\bm{\Omega}_n=(\bm{\Omega}_{n,1}\odot\bm{\Omega}_{n,2})$ in \cite{che2018randomized}, where $\bm{\Omega}_{n,1},\bm{\Omega}_{n,2}\in\mathbb{R}^{I\times (\mu_n+R)}$ are standard Gaussian matrices. The difference between \cite{che2018randomized} and \cite{decomposition_big_tensor} is that the storage capacity of $\bm{\Omega}_n$ is different. As shown in \cite{che2018randomized,decomposition_big_tensor}, comparison with the deterministic algorithms for low multilinear rank approximations, randomized algorithms are often faster and more robust and the algorithm in \cite{che2018randomized} is faster than that of \cite{decomposition_big_tensor}.

 The main contribution of this paper is to design a more effective randomized algorithm for the computation of low multilinear rank approximations of tensors. Our proposed algorithm can be divided into two stages. Suppose that $\mathcal{A}\in\mathbb{R}^{I_1\times I_2\times I_3}$. In the first stage, for each $n$, the Kronecker product of two standard Gaussian matrices of suitable dimensions are applied to the mode-$n$ unfolding of $\mathcal{A}$, which is an $I_n\times \prod_{m=1,m\neq n}^3L_{n,m}$ matrix $\mathbf{B}_{n,(n)}$. In the second stage, we utilize the  singular value decomposition (SVD) to obtain an orthonormal matrix, satisfying the requirement that the column space of the matrix can be used to approximate $\mathbf{B}_{n,(n)}$. Note that Algorithm \ref{SUB:alg3} can be viewed as the generalization of the core idea of the randomized algorithm in \cite{martinsson2011a}. As shown in Section \ref{SUB:sect6}, in terms of CPU times, the proposed algorithm is faster than the existing algorithms for low multilinear rank approximations; and in terms of RLNE, the proposed algorithms are often more accurate than the existing algorithms.


Throughout this paper, we assume that $I$, $J$, and $N$ denote the index upper bounds, unless stated otherwise. We adopt lower case letters $x,u,v,\dots$ for scalars, lower case bold letters $\mathbf{x},\mathbf{u},
\mathbf{v},\dots$ for vectors, bold capital letters $\mathbf{A},\mathbf{B},\mathbf{C},\dots$ for matrices, and calligraphic letters $\mathcal{A},\mathcal{B},\mathcal{C},\dots$ for tensors. This notation is consistently used for lower-order parts of a given structure. For example, the entry with row index $i$ and column index $j$ in a matrix ${\bf A}$, i.e., $({\bf A})_{ij}$, is represented as $a_{ij}$ (also $(\mathbf{x})_i=x_i$ and $(\mathcal{A})_{i_1i_2i_3}=a_{i_1i_2i_3}$).
For a vector $\mathbf{x}\in\mathbb{R}^{I}$, we use $\|\mathbf{x}\|_2$ and $\mathbf{x}^\top$ to denote its 2-norm and transpose, respectively. $\mathbf{0}$ denotes the zero vector in $\mathbb{R}^{I}$.  $\mathbf{A} \otimes \mathbf{B}$
denotes the Kronecker product of matrices
$\mathbf{A}\in\mathbb{R}^{I\times J}$ and $\mathbf{B}\in\mathbb{R}^{K\times L}$.  $\mathbf{A} \odot \mathbf{B}$ is the Khatri-Rao product of matrices
$\mathbf{A}\in\mathbb{R}^{I\times L}$ and $\mathbf{B}\in\mathbb{R}^{J\times L}$. $\mathbf{A}^{\dag}$ represents the Moore-Penrose pseudoinverse of $\mathbf{A}\in\mathbb{R}^{I\times J}$. A matrix $\mathbf{Q}\in\mathbb{R}^{I\times K}$ with $K<I$ is orthonormal if $\mathbf{Q}^\top\mathbf{Q}=\mathbf{I}_K$.

The rest of our paper is organized as follows. In Section \ref{SUB:sect2}, we introduce basic tensor operations and singular values of random matrices. We present the higher-order singular value decomposition and higher-order orthogonal iteration for the low multilinear rank approximation in Section \ref{SUB:sect3}. The randomized algorithms for the low multilinear rank approximation are presented in Section \ref{SUB:sect4}. In the same section, we provide  probabilistic error bounds and analyze computational complexity of these three algorithms. The  error bounds are analyzed in Section \ref{SUB:sect5}. We illustrate our algorithms via numerical examples in Section \ref{SUB:sect6}. We conclude this paper and discuss future research topics in Section \ref{SUB:sect7}.
\section{Preliminaries}
\label{SUB:sect2}

We introduce the basic notations and concepts involving tensors which will be used in this paper.
The mode-$n$ product \cite{nonnegative,Kolda} of a real tensor $\mathcal{A}\in \mathbb{R}^{I_1\times I_2\times I_3}$ by a matrix ${\bf B}\in \mathbb{R}^{J\times I_n}$, denoted
by $\mathcal{C}=\mathcal{A}\times_{n}{\bf B}$:
\begin{equation*}
\begin{split}
n=1:\ c_{ji_2i_3}=\sum_{i_1=1}^{I_1}a_{i_1i_2i_3}b_{ji_1};\
n=2:\ c_{i_1ji_3}=\sum_{i_2=1}^{I_2}a_{i_1i_2i_3}b_{ji_2};\
n=3:\ c_{i_1i_2j}=\sum_{i_3=1}^{I_3}a_{i_1i_2i_3}b_{ji_3}.
\end{split}
\end{equation*}


For a tensor $\mathcal{A}\in \mathbb{R}^{I_1\times I_2\times I_3}$ and three matrices $\mathbf{F}\in \mathbb{R}^{J_n\times I_n}$, $\mathbf{G}\in \mathbb{R}^{J_m\times I_m}$ and $\mathbf{H}\in\mathbb{R}^{J'_n\times J_n}$, one has \cite{Kolda}
\begin{equation*}
\begin{cases}
&(\mathcal{A}\times_{n}\mathbf{F})\times_{m}\mathbf{G}
=(\mathcal{A}\times_{m}\mathbf{G})\times_{n}\mathbf{F}=\mathcal{A}\times_{n}\mathbf{F}\times_{m}\mathbf{G},\\ &(\mathcal{A}\times_{n}\mathbf{F})\times_{n}\mathbf{H}=\mathcal{A}\times_{n}(\mathbf{H}\cdot \mathbf{F}),
\end{cases}
\end{equation*}
where `$\cdot$' represents the multiplication of two matrices.

 For two tensors $\mathcal{A},\mathcal{B}\in \mathbb{R}^{I_1\times I_2\times I_3}$, the {\it Frobenius norm} of a tensor $\mathcal{A}$ is given by  $\|\mathcal{A}\|_{F}=\sqrt{\langle\mathcal{A},\mathcal{A}\rangle}$ and the scalar product $\langle\mathcal{A},\mathcal{B}\rangle$ is defined as \cite{hosvd,Kolda}
$$\langle\mathcal{A},\mathcal{B}\rangle=\sum_{i_1,i_2,i_3=1}^{I_1,I_2,I_3}a_{i_{1}i_{2} i_{3}}b_{i_{1}i_{2}i_{3}}.$$

  The mode-$n$ unfolding matrix of a third-order tensor can be understood as the process of the construction of a matrix containing all the mode-$n$ vectors of the tensor. The order of the columns is not unique and the unfolding matrix of $\mathcal{A}\in \mathbb{R}^{I_1\times I_2\times I_3}$, denoted by ${\bf A}_{(n)}$, arranges the mode-$n$ fibers into columns of this matrix. More specifically, a tensor element $(i_1,i_2,i_3)$ maps on a matrix element $(i_n,j)$, where
\begin{equation*}
\begin{split}
n=1:\ j=i_2+(i_3-1)I_2;\quad n=2:\ j=i_1+(i_3-1)I_1;\quad n=3:\ j=i_1+(i_2-1)I_1.
\end{split}
\end{equation*}

\subsection{Singular values of random matrices}

We first review the definition of the sub-Gaussian random variable. Sub-Gaussian variables are an important class of random variables that have strong tail decay properties.
\begin{definition}{{\bf (\cite[Definition 3.2]{shabat2016randomized})}}
	\label{SUB:def4}
	A real valued random variable $X$ is called a sub-Gaussian random variable if there exist $b>0$ such that for all $t>0$ we have $\mathbf{E}(e^{tX})\leq e^{b^2t^2/2}$. A random variable $X$ is centered if $\mathbf{E}(X)=0$.
\end{definition}

We cite several results adapted from \cite{litvak2005smallest,rudelson2009smallest} about random matrices whose entries are sub-Gaussian. We emphasize the case where $\mathbf{A}$ is an $I\times J$ matrix with $J>(1+1/\ln(I))I$. Similar results can be found in \cite{litvak2012smallest} for the square and almost square matrices.
\begin{definition}
	\label{SUB:def5}
	Assume that $\mu\geq1$, $a_1>0$ and $a_2>0$. The set $\mathbb{A}(\mu,a_1,a_2,I,J)$ consists of all $I\times J$ random matrices $\mathbf{A}$ whose entries are the centered  independent identically distributed real valued random variables satisfying the following conditions: {\rm (a)} moments: $\mathbf{E}(|a_{ij}|^3)\leq \mu^3$; {\rm (b)} norm: $\mathbf{P}(\|\mathbf{A}\|_2>a_1\sqrt{J})\leq e^{-a_2J}$; {\rm (c)} variance: $\mathbf{E}(|a_{ij}|^2)\leq 1$.
\end{definition}

It is proven in \cite{litvak2005smallest} that if $\mathbf{A}$ is sub-Gaussian, then $\mathbf{A}\in\mathbb{A}(\mu,a_1,a_2,I,J)$. For a Gaussian matrix with zero mean and unit variance, we have $\mu=(4/\sqrt{2\pi})^{1/3}$.
\begin{theorem}{{\bf (\cite[Section 2]{litvak2005smallest})}}
	\label{SUB:thm4}
	Suppose that $\mathbf{A}\in \mathbb{R}^{I\times J}$ is sub-Gaussian with $I\leq J$, $\mu\geq1$ and $a_2>0$. Then
$
	\mathbf{P}(\|\mathbf{A}\|_2>a_1\sqrt{J})\leq e^{-a_2J}
$,
	where $a_1=6\mu\sqrt{a_2+4}$.
\end{theorem}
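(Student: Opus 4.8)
The plan is to bound the operator norm $\|\mathbf{A}\|_2$ by a discretization (\emph{$\epsilon$-net}) argument that exploits the independence and sub-Gaussianity of the entries $a_{ij}$. Writing $S^{d-1}$ for the unit Euclidean sphere in $\mathbb{R}^d$, recall that $\|\mathbf{A}\|_2=\sup_{\mathbf{x}\in S^{J-1},\,\mathbf{y}\in S^{I-1}}\mathbf{y}^\top\mathbf{A}\mathbf{x}$. First I would fix a parameter $\epsilon\in(0,1/2)$ and choose $\epsilon$-nets $\mathcal{N}\subset S^{J-1}$ and $\mathcal{M}\subset S^{I-1}$; a standard volumetric estimate gives $|\mathcal{N}|\leq(3/\epsilon)^{J}$ and $|\mathcal{M}|\leq(3/\epsilon)^{I}$. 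The usual net-approximation lemma then yields $\|\mathbf{A}\|_2\leq(1-2\epsilon)^{-1}\max_{\mathbf{x}\in\mathcal{N},\,\mathbf{y}\in\mathcal{M}}\mathbf{y}^\top\mathbf{A}\mathbf{x}$, so it suffices to control the bilinear form on the finite set $\mathcal{N}\times\mathcal{M}$ and then pay the factor $(1-2\epsilon)^{-1}$. I will take $\epsilon=1/4$, so that this factor equals $2$ and $3/\epsilon=12$.

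For a fixed pair $(\mathbf{x},\mathbf{y})\in\mathcal{N}\times\mathcal{M}$, the quantity $Z=\mathbf{y}^\top\mathbf{A}\mathbf{x}=\sum_{i,j}y_i a_{ij} x_j$ is a linear combination of the independent centered sub-Gaussian variables $a_{ij}$. Using Definition \ref{SUB:def4} entrywise together with independence, I would compute $\mathbf{E}(e^{tZ})=\prod_{i,j}\mathbf{E}(e^{t y_i x_j a_{ij}})\leq\exp\!\big(\tfrac{1}{2}b^2t^2(\sum_i y_i^2)(\sum_j x_j^2)\big)=e^{b^2t^2/2}$, where $b$ is the sub-Gaussian parameter of the entries; thus $Z$ is itself sub-Gaussian with the same parameter. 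The Chernoff bound $\mathbf{P}(Z>s)\leq\inf_{t>0}e^{-ts+b^2t^2/2}=e^{-s^2/(2b^2)}$, together with the symmetric estimate for $-Z$, then gives $\mathbf{P}(|Z|>s)\leq 2e^{-s^2/(2b^2)}$ for every $s>0$. The only place where the moment parameter $\mu$ enters is through the bound $b\leq\mu$ relating the sub-Gaussian constant of each entry to the third-moment/variance data of Definition \ref{SUB:def5}; since $\mu\geq1$ this is exactly the normalization used to express the final constant in terms of $\mu$.

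A union bound over $\mathcal{N}\times\mathcal{M}$ then gives $\mathbf{P}\big(\max_{(\mathbf{x},\mathbf{y})}|Z|>s\big)\leq 2\,(3/\epsilon)^{I+J}e^{-s^2/(2\mu^2)}$. Here the hypothesis $I\leq J$ is used to replace $I+J$ by $2J$, so that $(3/\epsilon)^{I+J}\leq e^{2J\ln(3/\epsilon)}$ and every exponent becomes linear in $J$. Setting $s=\mu\sqrt{2}\,\tau\sqrt{J}$ and choosing $\tau^2=a_2+2\ln(3/\epsilon)+\ln2$ makes the right-hand side at most $e^{-a_2J}$ for $J\geq1$; on the complementary event the net-approximation lemma yields $\|\mathbf{A}\|_2\leq 2s=2\mu\sqrt{2}\sqrt{a_2+2\ln12+\ln2}\,\sqrt{J}$. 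The proof concludes by checking the elementary numerical inequality $2\sqrt{2}\sqrt{a_2+2\ln12+\ln2}\leq 6\sqrt{a_2+4}$ for all $a_2>0$ (squaring reduces it to $8(a_2+2\ln12+\ln2)\leq36(a_2+4)$, which holds since $2\ln12+\ln2<18$), giving $\|\mathbf{A}\|_2\leq 6\mu\sqrt{a_2+4}\,\sqrt{J}=a_1\sqrt J$ off an event of probability at most $e^{-a_2J}$.

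The conceptual skeleton — net, single-pair sub-Gaussian tail, union bound — is entirely standard, so I expect the real work to be bookkeeping rather than ideas. The main obstacle is to pin down the constants so that they collapse to precisely $a_1=6\mu\sqrt{a_2+4}$: this requires (i) a clean justification of the net-approximation lemma and the covering-number estimate $(3/\epsilon)^d$, (ii) the correct relation $b\leq\mu$ between the entrywise sub-Gaussian parameter and the moment parameter of Definition \ref{SUB:def5}, and (iii) a choice of $\epsilon$ (here $1/4$) that simultaneously keeps $(1-2\epsilon)^{-1}$ small and the logarithmic net cost $\ln(3/\epsilon)$ absorbable into the additive constant $4$ under the square root. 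Any of these could force a slightly different $\epsilon$ or a re-optimization of $\tau$, but the slack built into the constants $6$ and $+4$ should leave enough room to close the argument.
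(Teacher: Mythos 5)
Your proposal cannot be compared against an internal proof, because the paper contains none: Theorem \ref{SUB:thm4} is imported verbatim (via \cite{shabat2016randomized}) from \cite{litvak2005smallest}, and the paper never proves it. What you have written is essentially the standard argument by which such norm bounds are established in that literature: two $\epsilon$-nets, an entrywise Laplace-transform bound giving sub-Gaussianity of $\mathbf{y}^\top\mathbf{A}\mathbf{x}$, Chernoff, and a union bound. I checked your bookkeeping and it closes: with $\epsilon=1/4$ the approximation factor is $(1-2\epsilon)^{-1}=2$, the nets have cardinality at most $12^{J}$ and $12^{I}\leq 12^{J}$, the choice $\tau^2=a_2+2\ln 12+\ln 2$ makes the union bound at most $2^{1-J}e^{-a_2J}\leq e^{-a_2J}$, and the final numerical inequality $8(a_2+2\ln 12+\ln 2)\leq 36(a_2+4)$ holds since $2\ln 12+\ln 2<18$. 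One small point: Definition \ref{SUB:def4} states the Laplace condition only for $t>0$, while your factorization $\mathbf{E}(e^{ty_ix_ja_{ij}})$ involves coefficients $y_ix_j$ of either sign, so you implicitly need the condition for all real $t$; this is harmless for centered/symmetric variables but should be said.

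The one genuine gap is the sentence asserting $b\leq\mu$. The parameter $\mu$ of Definition \ref{SUB:def5} is a third-moment bound, and no inequality of the form $b\leq C\mu$ can hold: take entries equal to $\pm M$ with probability $\tfrac{1}{2}M^{-6}$ each and $0$ otherwise, so that $\mathbf{E}|a_{ij}|^2=M^{-4}\leq 1$ and $\mathbf{E}|a_{ij}|^3=M^{-3}\leq 1$ (so $\mu=1$ is admissible), yet the requirement $\mathbf{E}(e^{ta_{ij}})\leq e^{b^2t^2/2}$ for all $t$ forces $b^2\geq M^2/(2(6\ln M+\ln 2))\to\infty$. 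With such entries the conclusion of the theorem, read with the moment parameter $\mu$, actually fails (a single entry of size $M$ already makes $\|\mathbf{A}\|_2\geq M$ with probability polynomially small in $M$, not exponentially small in $J$), so the statement is only true when $\mu$ is interpreted as (an upper bound for) the sub-Gaussian Laplace constant of the entries --- which is how the constant is expressed in \cite{litvak2005smallest}, and which is what your proof actually establishes. This defect is inherited from the loose transcription of the theorem rather than introduced by you, and it is immaterial for the Gaussian matrices used in this paper, where $b=1\leq\mu=(4/\sqrt{2\pi})^{1/3}$; but to make your argument airtight you should delete the claim $b\leq\mu$ and instead state the hypothesis as ``the entries are sub-Gaussian with parameter at most $\mu$.''
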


Theorem \ref{SUB:thm4} establishes an upper bound for the largest singular value that depends on the desired probability. Theorem \ref{SUB:thm5}  bounds from the upper below the smallest singular value of a random sub-Gaussian matrices.
\begin{theorem}{{\bf (\cite[Section 2]{litvak2005smallest})}}
	\label{SUB:thm5}
	Let $\mu\geq1$, $a_1>0$ and $a_2>0$. Suppose that $\mathbf{A}\in\mathbb{A}(\mu,a_1,a_2,I,J)$ with $J>(1+1/\ln(I))I$. Then, there exist positive constants $c_1$ and $c_2$ such that
	\begin{equation*}
	\mathbf{P}(\sigma_I(\mathbf{A})\leq c_1\sqrt{J})\leq e^{-J}+e^{-c''J/(2\mu^6)}+e^{-a_2J}\leq e^{-c_2J}.
	\end{equation*}
\end{theorem}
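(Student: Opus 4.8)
The plan is to bound $\sigma_I(\mathbf{A})$ below by the now-classical $\varepsilon$-net method for the smallest singular value of a matrix with more columns than rows. Since $I\le J$, the matrix has full row rank generically and
$$
\sigma_I(\mathbf{A})=\min_{\mathbf{y}\in\mathbb{R}^I,\ \|\mathbf{y}\|_2=1}\|\mathbf{A}^\top\mathbf{y}\|_2 ,
$$
so it suffices to show that $\|\mathbf{A}^\top\mathbf{y}\|_2$ is of order $\sqrt{J}$ uniformly over the unit sphere $S^{I-1}$. First I would fix a direction, establish a small-ball lower bound for $\|\mathbf{A}^\top\mathbf{y}\|_2$, and then discretize the sphere and take a union bound, absorbing the discretization error through the operator-norm estimate of Theorem \ref{SUB:thm4}.

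For the fixed-direction step, write $\mathbf{a}_1,\dots,\mathbf{a}_J\in\mathbb{R}^I$ for the columns of $\mathbf{A}$. Because the entries are centered, independent and identically distributed, the coordinates $\langle\mathbf{a}_j,\mathbf{y}\rangle$ of $\mathbf{A}^\top\mathbf{y}$ are independent, centered, sub-Gaussian random variables, with variance controlled by condition {\rm (c)} and third absolute moment by condition {\rm (a)}. The crux of this step is anti-concentration: each non-degenerate coordinate places only a bounded fraction of its mass in any short interval about the origin, so $\langle\mathbf{a}_j,\mathbf{y}\rangle^2$ carries a uniformly positive amount of mass away from zero. Tensorizing over the $J$ independent coordinates through a Laplace-transform (Markov) estimate then yields a bound of the form
$$
\mathbf{P}\bigl(\|\mathbf{A}^\top\mathbf{y}\|_2\le c_1\sqrt{J}\bigr)\le e^{-c''J/(2\mu^6)} ,
$$
the factor $\mu^6$ arising when the cubic moment control of {\rm (a)} is squared.

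Next I would fix an $\varepsilon$-net $\mathcal{N}\subset S^{I-1}$ with $|\mathcal{N}|\le(3/\varepsilon)^I$ and apply a union bound, so that with probability at least $1-(3/\varepsilon)^I e^{-c''J/(2\mu^6)}$ every net point obeys $\|\mathbf{A}^\top\mathbf{y}'\|_2> c_1\sqrt{J}$. To pass to an arbitrary $\mathbf{y}\in S^{I-1}$, pick $\mathbf{y}'\in\mathcal{N}$ with $\|\mathbf{y}-\mathbf{y}'\|_2\le\varepsilon$ and estimate
$$
\|\mathbf{A}^\top\mathbf{y}\|_2\ge\|\mathbf{A}^\top\mathbf{y}'\|_2-\|\mathbf{A}\|_2\,\|\mathbf{y}-\mathbf{y}'\|_2> c_1\sqrt{J}-a_1\sqrt{J}\,\varepsilon ,
$$
where the operator-norm bound $\|\mathbf{A}\|_2\le a_1\sqrt{J}$ holds off an event of probability at most $e^{-a_2J}$ by condition {\rm (b)} (equivalently Theorem \ref{SUB:thm4}). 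Choosing $\varepsilon$ a small constant multiple of $c_1/a_1$ keeps the right-hand side a fixed positive multiple of $\sqrt{J}$, which after renaming $c_1$ gives $\sigma_I(\mathbf{A})>c_1\sqrt{J}$ on the intersection of the two good events.

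The step I expect to be the main obstacle is verifying that the net entropy does not swallow the deviation exponent. The net contributes $(3/\varepsilon)^I=e^{I\ln(3/\varepsilon)}$, so the union-bound exponent is $c''J/(2\mu^6)-I\ln(3/\varepsilon)$. The hypothesis $J>(1+1/\ln(I))I$, equivalently $J-I>I/\ln(I)$, is exactly what allows $\varepsilon$ to be taken as a negative power of $I$---so that $\ln(3/\varepsilon)=O(\ln I)$---while keeping this exponent bounded below by a positive multiple of $J$. Collecting the contributions of the small-ball estimate, the net balancing, and the operator-norm bound produces the three summands
$$
\mathbf{P}\bigl(\sigma_I(\mathbf{A})\le c_1\sqrt{J}\bigr)\le e^{-J}+e^{-c''J/(2\mu^6)}+e^{-a_2J},
$$
and bounding each exponent below by a common $c_2J$ yields $\le e^{-c_2J}$. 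The genuine work lies in tracking how $c_1$, $c_2$ and $c''$ depend on $\mu$, $a_1$, $a_2$ and on the slack $J-I$.
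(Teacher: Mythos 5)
First, a point of comparison: the paper does not prove Theorem \ref{SUB:thm5} at all --- it is imported verbatim from \cite[Section 2]{litvak2005smallest} --- so your proposal can only be measured against that source's argument. Your overall skeleton (fixed-direction small-ball estimate, $\varepsilon$-net, union bound, operator-norm control via Theorem \ref{SUB:thm4}) is the right family of ideas, and your identification of where $\mu^6$ comes from (squaring the third-moment bound, e.g.\ via a Paley--Zygmund inequality with exponent $3/2$) is correct.

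However, the step you yourself flag as the main obstacle contains a genuine, fatal error. The Paley--Zygmund/tensorization route gives a per-direction bound whose exponent is \emph{capped} at $c''J/(2\mu^6)$ no matter how small the radius is taken: for Bernoulli-type entries the per-coordinate small-ball probability does not decrease below a constant as the radius shrinks (e.g.\ $\mathbf{P}(a_{i1}+a_{i2}=0)=1/2$). Consequently the union bound requires $I\ln(3/\varepsilon)\lesssim c''J/(2\mu^6)$. In the regime the theorem actually covers, $J$ can be as small as $(1+1/\ln I)I$, so $J/I\to 1$ and this forces $\ln(3/\varepsilon)\lesssim c''/(2\mu^6)$, i.e.\ $\varepsilon$ bounded below by a constant close to $3$ --- far too coarse for the approximation step $\|\mathbf{A}^\top\mathbf{y}\|_2\ge\|\mathbf{A}^\top\mathbf{y}'\|_2-a_1\sqrt{J}\,\varepsilon$, which needs $\varepsilon<c_1/a_1<1$. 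Your claim that the hypothesis $J-I>I/\ln I$ allows $\varepsilon=I^{-\alpha}$ is quantitatively backwards: with $\varepsilon=I^{-\alpha}$ the net entropy is $\alpha I\ln I+I\ln 3$, which for large $I$ dwarfs the capped exponent $c''J/(2\mu^6)\approx c''I/(2\mu^6)$; the hypothesis provides only $O(1/\ln I)$ surplus per dimension, whereas a polynomially small $\varepsilon$ costs $\Theta(\ln I)$ per dimension. As written, your argument proves the theorem only when $J\ge CI$ for a sufficiently large constant $C$ (depending on $\mu$ and $a_1$), or when $J\gtrsim I\ln I$ if one insists on polynomially small $\varepsilon$. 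The proof in \cite{litvak2005smallest} overcomes exactly this difficulty: the third-moment hypothesis is exploited through a Berry--Esseen-type anti-concentration bound, so that for ``spread'' directions the per-coordinate small-ball probability scales like the radius itself, making the per-direction exponent grow like $J\ln(1/\lambda)$ as the radius $\lambda$ shrinks; nearly sparse directions, for which anti-concentration saturates, are handled by a separate lower-entropy net. Only with this radius-dependent gain does the slack $J-I>I/\ln I$ suffice to absorb the entropy. A secondary point: condition (c) of Definition \ref{SUB:def5}, as transcribed in this paper, is an \emph{upper} bound on the variance and cannot yield any anti-concentration (it is satisfied by identically zero entries); the class in \cite{litvak2005smallest} requires a variance \emph{lower} bound, and your small-ball step silently uses it in that stronger form.
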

\begin{remark}
	\label{SUB:rem3}
	For Theorem {\rm\ref{SUB:thm5}}, the exact values of constants $c_1$, $c_2$ and $c''$ are discussed in {\rm \cite{shabat2016randomized}}.
\end{remark}
\section{HOSVD and HOOI}
\label{SUB:sect3}

A {\it Tucker decomposition} \cite{tucker_1966} of a tensor $\mathcal{A}\in \mathbb{R}^{I_1\times I_2 \times I_3}$ is defined as
\begin{equation}\label{SUB:eqn4}
\mathcal{A}\approx\mathcal{G}\times_1{\bf U}^{(1)}\times_2{\bf U}^{(2)}\times_3{\bf U}^{(3)},
\end{equation}
where ${\bf U}^{(n)}\in \mathbb{R}^{I_n\times R_n}$ are called the {\it mode-$n$ factor matrices} and $\mathcal{G}\in \mathbb{R}^{R_1\times R_2\times R_3}$ is called the {\it core tensor} of the decomposition with the set $\{R_1,R_2,R_3\}$.

The Tucker decomposition is closely related to the mode-$n$ unfolding matrix $\mathbf{A}_{(n)}$ with $n=1,2,3$. In particular, the relation (\ref{SUB:eqn4}) implies
$$
\begin{cases}
{\bf A}_{(1)}&\approx{\bf U}^{(1)}{\bf G}_{(1)}({\bf U}^{(2)}\otimes {\bf U}^{(3)})^{\top};\\
{\bf A}_{(2)}&\approx{\bf U}^{(2)}{\bf G}_{(2)}({\bf U}^{(1)}\otimes {\bf U}^{(3)})^{\top};\\
{\bf A}_{(3)}&\approx{\bf U}^{(3)}{\bf G}_{(3)}({\bf U}^{(1)}\otimes {\bf U}^{(2)})^{\top}.
\end{cases}
$$
It follows that the rank of ${\bf A}_{(n)}$ is less than or equal to $R_n$, as the mode-$n$ factor ${\bf U}^{(n)}\in\mathbb{R}^{I_n\times R_n}$  at most has rank  $R_n$. We define the multilinear rank of $\mathcal{A}$ as the tuple
$\{R_1,R_2,R_3\}$, where the rank of ${\bf A}_{(n)}$ is equal to $R_n$.
%

 Applying the singular value decomposition (SVD) to $\mathbf{A}_{(n)}$ with $n=1,2,3$, we obtain a special form of the Tucker decomposition of a given tensor, which is called the {\it higher-order singular value decomposition} (HOSVD) \cite{hosvd}. 

When $R_n<{\rm rank}(\mathbf{A}_{(n)})$ for one or more $n$, the decomposition is called the {\it truncated HOSVD}. The truncated HOSVD is not optimal in terms of giving the best fitting as measured by the Frobenius norm of the difference, but it is used to initialize iterative algorithms to compute the best approximation of a specified multilinear rank \cite{lathauwer_rank_one_2000,elden_newton_2009,trust_region_2011,quasi_app_2010}.  For given three positive integers $\mu_1$, $\mu_2$ and $\mu_3$, the low multilinear rank approximation of $\mathcal{A}$ can be rewritten as the optimization problem
respect  to the Frobenius norm
\begin{equation*}
\begin{split}
\min_{\mathcal{G},\mathbf{Q}_1,\mathbf{Q}_2,\mathbf{Q}_3}&\quad
\left\|\mathcal{A}-\mathcal{G}\times_1\mathbf{Q}_1\times_2\mathbf{Q}_2
\times_3\mathbf{Q}_3\right\|_F^2,\\
\text{subject to}&\quad\mathcal{G}\in\mathbb{R}^{\mu_1\times \mu_2\times \mu_3},
\quad\mathbf{Q}_n\in\mathbb{R}^{I_n\times \mu_n}\text{ is orthonormal}.
\end{split}
\end{equation*}

If $\mathbf{Q}_n^*$ is a solution of the above maximization problem, then we call $\mathcal{A}\times_1\mathbf{P}_1\times_2\mathbf{P}_2
\times_3\mathbf{P}_3$ as a {\it low multilinear rank approximation} of $\mathcal{A}$, where $\mathbf{P}_n=\mathbf{Q}_n^*(\mathbf{Q}_n^*)^\top$.

\section{The proposed algorithm and its analysis}\label{SUB:sect4}

In this section, we present our randomized algorithm for the low multilinear rank approximations of tensors, summarized in Algorithm \ref{SUB:alg3}. We give a slight modification of Algorithm \ref{SUB:alg3} to reduce its computational complexity.

\subsection{Framework for the algorithm}

For each $n$, Algorithm \ref{SUB:alg3} begins by projecting the mode-$n$ unfolding of the input tensor on the Kronecker product of random matrices. The result matrix captures most of the range of the mode-$n$ unfolding of the tensor. Then we compute a basis for this matrix by Lemma \ref{SUB:lem10}. Finally, we project the input tensor on it.
\begin{algorithm}[htb]
	\caption{The proposed randomized algorithm for low multilinear rank approximations with $N=3$}
	\label{SUB:alg3}
	\begin{algorithmic}[1]
		\STATEx {\bf Input}: A tensor $\mathcal{A}\in \mathbb{R}^{I_1\times I_2\times I_3}$ to decompose, the desired multilinear rank $\{\mu_1,\mu_2,\mu_3\}$, $L_{3,1}L_{3,2}\geq\mu_3+K$, $L_{2,1}L_{2,3}\geq\mu_2+K$, and $L_{1,2}L_{1,3}\geq\mu_1+K$, where $K$ is a oversampling parameter.
		\STATEx {\bf Output}: Three orthonormal matrices $\mathbf{Q}_n$ such that $\|\mathcal{A}\times_1 (\mathbf{Q}_1\mathbf{Q}_1^\top)\times_2 (\mathbf{Q}_2\mathbf{Q}_2^\top)
		\times_3 (\mathbf{Q}_3\mathbf{Q}_3^\top)-\mathcal{A}\|_F\leq
		\sum_{n=1}^3O(\Delta_{\mu_n+1}(\mathbf{A}_{(n)}))$.
		\STATE Form six real matrices $\mathbf{G}_{n,m}\in\mathbb{R}^{L_{n,m}\times I_m}$ whose entries are independent and identically distributed (i.i.d.) Gaussian random variables of zero mean and unit variance, where $m,n=1,2,3$ and $m\neq n$.
		\STATE Compute three product tensors
		\begin{equation*}
		\mathcal{B}_1=\mathcal{A}\times_2\mathbf{G}_{1,2}\times_3\mathbf{G}_{1,3},\quad
		\mathcal{B}_2=\mathcal{A}\times_1\mathbf{G}_{2,1}\times_3\mathbf{G}_{2,3},\quad
		\mathcal{B}_3=\mathcal{A}\times_1\mathbf{G}_{3,1}\times_2\mathbf{G}_{3,2}.
		\end{equation*}
		\STATE Form the mode-$n$ unfolding $\mathbf{B}_{n,(n)}$ of each tensor $\mathcal{B}_n$.
		\STATE For each $\mathbf{B}_{n,(n)}$, find a real $I_n\times \mu_n$ matrix $\mathbf{Q}$ whose columns are orthonormal, such that there exists a real $\mu_n\times \prod_{m=1,m\neq n}^3L_{n,m}$ matrix $\mathbf{S}_n$ for which
		$$\|\mathbf{Q}\mathbf{S}_n-\mathbf{B}_{n,(n)}\|_2\leq\sigma_{\mu_n+1}(\mathbf{B}_{n,(n)}),$$
		where $\sigma_{\mu_n+1}(\mathbf{B}_{n,(n)})$ is the $(\mu_n+1)$st greatest singular value of $\mathbf{B}_{n,(n)}$.
		\STATE Set $\mathbf{Q}_n:=\mathbf{Q}(:,1:\mu_n)$ for all $n=1,2,3$.
	\end{algorithmic}
\end{algorithm}
\begin{remark}
	In Algorithm {\rm\ref{SUB:alg3}}, we use the computer science interpretation of $O(\cdot)$ to refer to the class of functions whose growth is bounded and below up to a constant.
\end{remark}

Suppose that three matrices $\mathbf{Q}_n\in\mathbb{R}^{I_n\times \mu_n}$ are derived from Algorithm \ref{SUB:alg3}, then we have
\begin{equation}\label{SUB:eqn5}
\begin{split}
&\mathcal{A}-\mathcal{A}\times_1({\bf Q}_1{\bf Q}_1^{\top})\times_2({\bf Q}_2{\bf Q}_2^{\top})\times_3({\bf Q}_3{\bf Q}_3^{\top})=\mathcal{A}-\mathcal{A}\times_1({\bf Q}_1{\bf Q}_1^\top)+\mathcal{A}\times_1({\bf Q}_1{\bf Q}_1^\top)\\
&-\mathcal{A}\times_1({\bf Q}_1{\bf Q}_1^\top)\times_2({\bf Q}_2{\bf Q}_2^\top)+\mathcal{A}\times_1({\bf Q}_1{\bf Q}_1^\top)\times_{2}({\bf Q}_{2}{\bf Q}_{2}^{\top})\\
&-\mathcal{A}\times_1({\bf Q}_1{\bf Q}_1^\top)\times_{2}({\bf Q}_{2}{\bf Q}_{2}^{\top})\times_3({\bf Q}_3{\bf Q}_3^{\top}).
\end{split}
\end{equation}
According to (\ref{SUB:eqn5}), we have
\begin{equation}\label{SUB:eqn13}
\left\|\mathcal{A}-\mathcal{A}\times_1({\bf Q}_1{\bf Q}_1^{\top})\times_2({\bf Q}_2{\bf Q}_2^{\top}) \times_3({\bf Q}_3{\bf Q}_3^{\top})\right\|_F^2 \leq\sum_{n=1}^3 \left\|\mathcal{A}-\mathcal{A}\times_n({\bf Q}_n{\bf Q}_n^{\top})\right\|_F^2.
\end{equation}
The result relies on the orthogonality of the projector in the Frobenius norm \cite{vvm_2012_sisc}, i.e., for any $n=1,2,3$,
\begin{equation*}
\|\mathcal{A}\|_F^2=\left\|\mathcal{A}\times_n({\bf Q}_n{\bf Q}_n^{\top})\right\|_F^2+\left\|\mathcal{A}\times_n(\mathbf{I}_{I_n}-{\bf Q}_n{\bf Q}_n^{\top})\right\|_F^2,
\end{equation*}
and the fact that  $\|{\bf AP}\|_F\leq\|{\bf A}\|_F$ with ${\bf A}\in\mathbb{R}^{I\times J}$, where the orthogonal projection $\mathbf{P}$ satisfies \cite{Golub}
\begin{equation*}
{\bf P}^2={\bf P},\quad {\bf P}^\top={\bf P},\quad {\bf P}\in\mathbb{R}^{J\times J}.
\end{equation*}
Hence, when obtaining the error bound of $\|\mathcal{A}-\mathcal{A}\times_n({\bf Q}_n{\bf Q}_n^{\top})\|_F^2$, we present an error bound for Algorithm \ref{SUB:alg3}, summarized in the following theorem.

\begin{theorem}
	\label{SUB:thm6}
	Suppose that $I_1\leq I_2I_3$, $I_2\leq I_1I_3$ and $I_3\leq I_1I_2$. Let $\mu_1$, $L_{1,2}$ and $L_{1,3}$ be integers such that $(1+1/\ln(\sqrt{\mu_1}))\sqrt{\mu_1}<L_{1,2},L_{1,3}$ and $L_{1,2}L_{1,3}<\min(I_1,I_2I_3)$.
	Let $\mu_2$, $L_{2,1}$ and $L_{2,3}$ be integers such that $(1+1/\ln(\sqrt{\mu_2}))\sqrt{\mu_2}<L_{2,1},L_{2,3}$ and $L_{2,1}L_{2,3}<\min(I_2,I_1I_3)$.
	Let $\mu_2$, $L_{3,1}$ and $L_{3,2}$ be integers such that $(1+1/\ln(\sqrt{\mu_3}))\sqrt{\mu_3}<L_{3,1},L_{3,2}$ and $L_{3,1}L_{3,2}<\min(I_3,I_1I_2)$.  Let $\sqrt{\mu_1}$, $\sqrt{\mu_2}$ and $\sqrt{\mu_3}$ be positive integers.
	For each $n$, we define $a_{n}$, $a_{n}'$, $c_{nm}$, and $c_{nm}'$ as in Theorems {\rm\ref{SUB:thm4}} and {\rm\ref{SUB:thm5}} with $m=1,2,3$ and $m\neq n$.
	
	For a given tensor $\mathcal{A}\in\mathbb{R}^{I_1\times I_2\times I_3}$, three orthonormal matrices $\mathbf{Q}_n$ are obtained by Algorithm {\rm\ref{SUB:alg3}}. Then
	\begin{equation}\label{SUB:eqn6}
	\left\|\mathcal{A}-\mathcal{A}\times_1({\bf Q}_1{\bf Q}_1^{\top})\times_2({\bf Q}_2{\bf Q}_2^{\top}) \times_3({\bf Q}_3{\bf Q}_3^{\top})\right\|_F\leq 2\sum_{n=1}^3C_n\Delta_{\mu_n+1}(\mathbf{A}_{(n)})
	\end{equation}
	with probability at least
	\begin{equation*}	1-\left(e^{-c_{12}'L_{1,2}}+e^{-c_{13}'L_{1,3}}+e^{-c_{21}'L_{2,1}}+e^{-c_{23}'L_{2,3}}+e^{-c_{31}'L_{3,1}}+e^{-c_{32}'L_{3,2}}+e^{-a_{1}'I_2I_3}+e^{-a_{2}'I_1I_3}+e^{-a_{3}'I_1I_2}\right),
	\end{equation*}
	where  $C_1$, $C_2$ and $C_3$ are given by
	\begin{equation*}
	\begin{split}
	C_1&=\sqrt{\frac{a_{1}^2I_2I_3}{c_{12}^2c_{13}^2L_{1,2}L_{1,3}}+1}
	+\sqrt{\frac{a_{1}^2I_2I_3}{c_{12}^2c_{13}^2L_{1,2}L_{1,3}}},\
	C_2=\sqrt{\frac{a_{2}^2I_1I_3}{c_{21}^2c_{23}^2L_{2,1}L_{2,3}}+1}
	+\sqrt{\frac{a_{2}^2I_1I_3}{c_{21}^2c_{23}^2L_{2,1}L_{2,3}}},\\
	C_3&=\sqrt{\frac{a_{3}^2I_1I_2}{c_{31}^2c_{32}^2L_{3,1}L_{3,2}}+1}
	+\sqrt{\frac{a_{3}^2I_1I_2}{c_{31}^2c_{32}^2L_{3,1}L_{3,2}}}.
	\end{split}
	\end{equation*}
\end{theorem}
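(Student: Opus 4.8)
The plan is to use the reduction (\ref{SUB:eqn13}), which is already in hand, to pass from the three-mode error to three single-mode errors, and then to prove for each $n$ the per-mode estimate $\|\mathcal{A}-\mathcal{A}\times_n(\mathbf{Q}_n\mathbf{Q}_n^\top)\|_F\le 2C_n\,\Delta_{\mu_n+1}(\mathbf{A}_{(n)})$. Since $\sqrt{\sum_n x_n^2}\le\sum_n x_n$ for nonnegative $x_n$, combining these three estimates with (\ref{SUB:eqn13}) yields (\ref{SUB:eqn6}). First I would rewrite the single-mode error as the matrix quantity $\|\mathcal{A}-\mathcal{A}\times_n(\mathbf{Q}_n\mathbf{Q}_n^\top)\|_F=\|(\mathbf{I}_{I_n}-\mathbf{Q}_n\mathbf{Q}_n^\top)\mathbf{A}_{(n)}\|_F$ and record the two structural facts that drive everything: by the mode-$n$ product rules, $\mathbf{B}_{n,(n)}=\mathbf{A}_{(n)}\bm{\Omega}_n$, where $\bm{\Omega}_n$ is the transpose of the Kronecker product of the two Gaussian matrices $\mathbf{G}_{n,m},\mathbf{G}_{n,p}$ with $\{m,p\}=\{1,2,3\}\setminus\{n\}$, an $I_mI_p\times L_{n,m}L_{n,p}$ random matrix; and Step 5 of Algorithm \ref{SUB:alg3} makes $\mathbf{Q}_n$ the span of the $\mu_n$ dominant left singular vectors of $\mathbf{B}_{n,(n)}$, so that $\mathbf{Q}_n\mathbf{Q}_n^\top$ is the rank-$\mu_n$ projector associated with $\mathbf{A}_{(n)}\bm{\Omega}_n$. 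This is precisely the configuration of the randomized range finder of \cite{martinsson2011a}, except that the test matrix is Kronecker-structured.

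The next step is deterministic. Writing the SVD $\mathbf{A}_{(n)}=\mathbf{U}_n\bm{\Sigma}_n\mathbf{V}_n^\top$ with $\mathbf{V}_n=[\mathbf{V}_{n,1}\ \mathbf{V}_{n,2}]$ and $\mathbf{V}_{n,1}$ the leading $\mu_n$ right singular vectors, I set $\bm{\Omega}_{n,1}=\mathbf{V}_{n,1}^\top\bm{\Omega}_n$ and $\bm{\Omega}_{n,2}=\mathbf{V}_{n,2}^\top\bm{\Omega}_n$. Assuming $\bm{\Omega}_{n,1}$ has full row rank, I would apply a Halko--Martinsson--Tropp / \cite{martinsson2011a} type perturbation bound adapted to the truncated rank-$\mu_n$ projector: split $\mathbf{I}-\mathbf{Q}_n\mathbf{Q}_n^\top$ across the full range of $\mathbf{B}_{n,(n)}$ and its rank-$\mu_n$ truncation, estimate each piece by the relevant tail times $\|\bm{\Omega}_{n,2}\|_2\,\|\bm{\Omega}_{n,1}^\dagger\|_2$, and collect terms. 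This produces a bound of the form $2C_n\,\Delta_{\mu_n+1}(\mathbf{A}_{(n)})$ with $C_n=\sqrt{X_n+1}+\sqrt{X_n}$ and $X_n=\|\bm{\Omega}_{n,2}\|_2^2\,\|\bm{\Omega}_{n,1}^\dagger\|_2^2$, matching exactly the advertised shape of $C_n$ once $X_n$ is bounded; the factor $2$ absorbs the passage from the truncation step to the Frobenius estimate.

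It then remains to control $X_n$ probabilistically. For the numerator, since $\mathbf{V}_{n,2}$ is orthonormal, $\|\bm{\Omega}_{n,2}\|_2\le\|\bm{\Omega}_n\|_2=\|\mathbf{G}_{n,m}\|_2\|\mathbf{G}_{n,p}\|_2$ because the singular values of a Kronecker product multiply; bounding the two Gaussian factors by Theorem \ref{SUB:thm4} gives $\|\bm{\Omega}_{n,2}\|_2^2\le a_n^2\,I_mI_p$, the numerator of $X_n$, with the corresponding large-deviation term $e^{-a_n'I_mI_p}$. For the denominator I must lower-bound $\sigma_{\min}(\bm{\Omega}_{n,1})=\sigma_{\mu_n}(\mathbf{V}_{n,1}^\top\bm{\Omega}_n)$ by $c_{nm}c_{np}\sqrt{L_{n,m}L_{n,p}}$. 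Using $\sqrt{\mu_n}\in\mathbb{N}$ and the rotation invariance of the Gaussian, I would exhibit the relevant $\mu_n\times L_{n,m}L_{n,p}$ matrix as a Kronecker product of two independent $\sqrt{\mu_n}\times L_{n,m}$ and $\sqrt{\mu_n}\times L_{n,p}$ Gaussian blocks, whose smallest singular values are $\ge c_{nm}\sqrt{L_{n,m}}$ and $\ge c_{np}\sqrt{L_{n,p}}$ by Theorem \ref{SUB:thm5}; this is exactly why the hypotheses read $(1+1/\ln(\sqrt{\mu_n}))\sqrt{\mu_n}<L_{n,m},L_{n,p}$, which is the condition $J>(1+1/\ln(I))I$ of Theorem \ref{SUB:thm5} with $I=\sqrt{\mu_n}$ and $J=L_{n,\cdot}$. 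Multiplying the two factor bounds yields $\|\bm{\Omega}_{n,1}^\dagger\|_2^2\le(c_{nm}^2c_{np}^2L_{n,m}L_{n,p})^{-1}$, so $X_n\le a_n^2 I_mI_p/(c_{nm}^2c_{np}^2 L_{n,m}L_{n,p})$, precisely the quantity inside $C_n$. A union bound over the six smallest-singular-value events (two per mode, each of probability $\le e^{-c_{nm}'L_{n,m}}$) and the three largest-singular-value events (one per mode, $\le e^{-a_n'I_mI_p}$) gives the stated success probability.

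The hard part will be the lower bound on $\sigma_{\mu_n}(\mathbf{V}_{n,1}^\top\bm{\Omega}_n)$. The difficulty is genuine: $\bm{\Omega}_n$ is tall, with rank at most $L_{n,m}L_{n,p}<I_mI_p$, so it has a large kernel, and the fixed $\mu_n$-dimensional subspace $\mathbf{V}_{n,1}$ is in general \emph{not} aligned with any Kronecker factorization; the naive estimate $\sigma_{\mu_n}(\mathbf{V}_{n,1}^\top\bm{\Omega}_n)\ge\sigma_{\min}(\bm{\Omega}_n)$ is vacuous, since $\mathbf{V}_{n,1}$ could meet the kernel of $\bm{\Omega}_n^\top$. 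The whole argument therefore hinges on showing that the random Kronecker-structured projection $\mathbf{V}_{n,1}^\top(\mathbf{G}_{n,p}^\top\otimes\mathbf{G}_{n,m}^\top)$ stays well conditioned with high probability and decouples, up to the action of the fixed orthonormal $\mathbf{V}_{n,1}$, into two independent Gaussian blocks of the sizes above. This is the point where the integrality of $\sqrt{\mu_n}$ is exploited, and it is the step I expect to require the most care, as it is exactly where the Kronecker structure of the sketch interacts with the unstructured singular subspace of $\mathbf{A}_{(n)}$.
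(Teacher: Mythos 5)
Your outline follows the paper's proof almost line for line: the reduction (\ref{SUB:eqn13}); the deterministic per-mode bound built from the SVD $\mathbf{A}_{(n)}=\mathbf{U}_n\bm{\Sigma}_n\mathbf{V}_n^\top$ and the pseudoinverse of $\mathbf{H}_n=\mathbf{V}_{n,1}^\top\bm{\Omega}_n$ in the style of \cite{martinsson2011a} (this is exactly the paper's Lemma \ref{SUB:lem11} combined with the choice $\mathbf{F}=(\mathbf{H}_n^\dag\ \ \mathbf{0})\,\mathbf{V}_n^\top$ inside Theorem \ref{SUB:thm9}, and it produces the same constants $C_n=\sqrt{X_n+1}+\sqrt{X_n}$); the control of $\|\bm{\Omega}_{n,2}\|_2$ via Theorem \ref{SUB:thm4} (the paper's Theorems \ref{SUB:thm10} and \ref{SUB:thm11}); and a union bound over the same nine events. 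The one step you could not carry out is, however, a genuine gap and not a technicality: the lower bound $\sigma_{\mu_n}(\mathbf{V}_{n,1}^\top\bm{\Omega}_n)\ge c_{nm}c_{np}\sqrt{L_{n,m}L_{n,p}}$, i.e.\ the bound on $\|\bm{\Omega}_{n,1}^\dag\|_2$, which carries all of the probabilistic difficulty. The mechanism you propose --- rotation invariance of the Gaussian, used to ``decouple'' $\mathbf{V}_{n,1}^\top(\mathbf{G}_{n,p}\otimes\mathbf{G}_{n,m})^\top$ into a Kronecker product of two independent $\sqrt{\mu_n}\times L_{n,m}$ and $\sqrt{\mu_n}\times L_{n,p}$ Gaussian blocks --- cannot work as stated: the law of $\mathbf{G}_{n,p}\otimes\mathbf{G}_{n,m}$ is invariant only under orthogonal maps that respect the Kronecker structure (those of the form $\mathbf{P}\otimes\mathbf{Q}$), whereas $\mathbf{V}_{n,1}$, coming from the SVD of the unstructured matrix $\mathbf{A}_{(n)}$, mixes the two factors. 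No such decoupling exists, so your proof is incomplete at precisely the step you flagged as the crux.

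You should also know how the paper itself closes this step, because it bears directly on your (well-founded) misgivings. The paper invokes its Lemma \ref{SUB:lemma1add}, $\sigma_{\min}(\mathbf{Q}^\top\mathbf{A})\ge\sigma_{\min}(\mathbf{A})$ for orthonormal $\mathbf{Q}$, with $\mathbf{Q}=\mathbf{V}_{n,1}$ and $\mathbf{A}=\bm{\Omega}_n$; this removes $\mathbf{V}_{n,1}$ deterministically, after which $\sigma_{\min}(\bm{\Omega}_n)=\sigma_{\min}(\mathbf{G}_{n,m})\,\sigma_{\min}(\mathbf{G}_{n,p})$ and Theorem \ref{SUB:thm5} is applied to each Gaussian factor separately (so the hypotheses involving $\sqrt{\mu_n}$ are never actually used in the paper's argument). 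But this is exactly the ``naive estimate'' you rejected, and your objection is mathematically sound: for $K<J<I$ the inequality fails whenever the range of $\mathbf{Q}$ is orthogonal, or nearly orthogonal, to the range of $\mathbf{A}$ --- take $\mathbf{A}=(\mathbf{e}_1\ \mathbf{e}_2)$ and $\mathbf{Q}=\mathbf{e}_3$ in $\mathbb{R}^3$, so that $\mathbf{Q}^\top\mathbf{A}=\mathbf{0}$ --- and the paper's proof of that lemma rests on characterizing $\sigma_{\min}$ as the minimum of the bilinear form $\mathbf{u}^\top\mathbf{M}\mathbf{v}/(\|\mathbf{u}\|_2\|\mathbf{v}\|_2)$, which is not $\sigma_{\min}$ (that minimum equals $-\sigma_{\max}$). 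In short, your attempt does not complete the proof, but the step it is missing is the same step the paper fills with an invalid lemma; a rigorous proof would require a least-singular-value bound for Kronecker-structured random matrices compressed onto a fixed, unstructured $\mu_n$-dimensional subspace, which neither your sketch nor the paper supplies.
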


\begin{remark}
	We assume that $\sqrt{\mu_1}$, $\sqrt{\mu_2}$ and $\sqrt{\mu_3}$ are positive integers in Theorem {\rm \ref{SUB:thm6}}. In general, we can also consider the case that $\sqrt{\mu_1}$, $\sqrt{\mu_2}$ and $\sqrt{\mu_3}$ are not positive integers.
\end{remark}

Suppose that $\mathbf{A}_{(1)}\in\mathbb{R}^{I_1\times I_2I_3}$ is the mode-1 unfolding of $\mathcal{A}\in\mathbb{R}^{I_1\times I_2\times I_3}$. Let $\mathbf{A}_{(1)}=\mathbf{U}\bm{\Sigma}\mathbf{V}^\top$ be the singular value decomposition of $\mathbf{A}_{(1)}$, where $\mathbf{U}\in\mathbb{R}^{I_1\times I_1}$ and $\mathbf{V}\in\mathbb{R}^{I_2I_3\times I_2I_3}$ are orthogonal and $\bm{\Sigma}\in\mathbb{R}^{I_1\times I_2I_3}$ is diagonal with positive diagonal elements. If $\mathcal{B}=\mathbf{A}\times_1 \mathbf{Q}_1\times_2 \mathbf{Q}_2\times_3 \mathbf{Q}_3$, where $\mathbf{Q}_n\in\mathbb{R}^{I_n\times I_n}$ are orthogonal with $n=1,2,3$, then we have
$$\mathbf{B}_{(1)}=(\mathbf{Q}_1\mathbf{U})\bm{\Sigma}(\mathbf{V}(\mathbf{Q}_3\otimes \mathbf{Q}_2))^\top,$$
where $\mathbf{B}_{(1)}$ is the mode-1 unfolding of $\mathcal{B}$. It implies that the singular values of $\mathbf{B}_{(1)}$ are the same as that of $\mathbf{A}_{(1)}$. Similarly, the singular values of the mode-$n$ unfolding of $\mathcal{A}$ are the same as that of the mode-$n$ unfolding of $\mathcal{B}$ with $n=1,2,3$. Thus, the upper bound in Theorem \ref{SUB:thm6} is orthogonal invariant.

For the case of $n=1$, we set $L_{1,2}L_{1,3}\geq\mu_1+K$ in Algorithm \ref{SUB:alg3} and $\min(I_1,I_2I_3)>L_{1,2}L_{1,3}>(1+1/\ln(\mu_1))\mu_1$ in Theorem \ref{SUB:thm6}. In practical, we set $L_{1,2}L_{1,3}$ is the smallest positive integer such that $L_{1,2}L_{1,3}\geq\mu_1+K$ and $\min(I_1,I_2I_3)>L_{1,2}L_{1,3}>(1+1/\ln(\mu_1))\mu_1$. Let $M=\max(\mu_1+K,(1+1/\ln(\mu_1))\mu_1)$. In practice, we set $L_{1,2}={\rm ceil}(\sqrt{M})$ and $L_{1,3}={\rm round}(\sqrt{M})$, where for $x\in\mathbb{R}$, ${\rm ceil}(x)$ rounds the value of $x$ to the nearest integer towards plus infinity and ${\rm round}(x)$ rounds the value of $x$ to the nearest integer.

In practice, in order to reduce the computational complexity of Algorithm \ref{SUB:alg3}, similar to Algorithm 3.2 in \cite{vvm_2012_sisc}, a slight modification of Algorithm \ref{SUB:alg3} is summarized in Algorithm \ref{SUB:alg8}. Based on (\ref{SUB:eqn5}) and the fact $\|\mathbf{A}\mathbf{Q}\|_F\leq\|\mathbf{A}\|_F$ for $\mathbf{A}\in\mathbb{R}^{I\times J}$ and any  orthonormal matrix $\mathbf{Q}\in\mathbb{R}^{J\times K}\ (K\leq J)$, the temporary tensor $\mathcal{C}$ in Algorithm \ref{SUB:alg8} is updated for each $n$.

\begin{algorithm}[htb]
	\caption{A slight modification of Algorithm \ref{SUB:alg3}}
	\label{SUB:alg8}
	\begin{algorithmic}[1]
		\STATEx {\bf Input}: A tensor $\mathcal{A}\in \mathbb{R}^{I_1\times I_2\times I_3}$ to decompose, the desired multilinear rank $\{\mu_1,\mu_2,\mu_3\}$, $L_{3,1}L_{3,2}\geq\mu_3+K$, $L_{2,1}L_{2,3}\geq\mu_2+K$, $L_{1,2}L_{1,3}\geq\mu_1+K$, and a processing order $\mathbf{p}\in\mathbb{S}_3$, where $K$ is a oversampling parameter.
		\STATEx {\bf Output}: Three orthonormal matrices $\mathbf{Q}_n$ such that $\|\mathcal{A}\times_1 (\mathbf{Q}_1\mathbf{Q}_1^\top)\times_2 (\mathbf{Q}_2\mathbf{Q}_2^\top)
		\times_3 (\mathbf{Q}_3\mathbf{Q}_3^\top)-\mathcal{A}\|_F\leq
		\sum_{n=1}^3O(\Delta_{\mu_n+1}(\mathbf{A}_{(n)}))$.
		\STATE Set  the temporary tensor: $\mathcal{C}=\mathcal{A}$.
		\FOR {$n=p_1,p_2,p_3$}
		\STATE Form two real matrices $\mathbf{G}_{n,m}\in\mathbb{R}^{L_{n,m}\times I_m}$ whose entries are i.i.d. Gaussian random variables of zero mean and unit variance, where $m=1,2,3$ and $m\neq n$.
		\STATE Compute the product tensor
		\begin{equation*}
		\mathcal{B}_n=\mathcal{C}\times_1\mathbf{G}_{n,1}\dots\times_{m-1}\mathbf{G}_{n,m-1}
		\times_{m+1}\mathbf{G}_{n,m+1}\dots\times_3\mathbf{G}_{n,3}.
		\end{equation*}
		\STATE Form the mode-$n$ unfolding $\mathbf{B}_{n,(n)}$ of the tensor $\mathcal{B}_n$.
		\STATE For the $\mathbf{B}_{n,(n)}$, find a real $I_n\times \mu_n$ matrix $\mathbf{Q}_n$ whose columns are orthonormal, such that there exists a real $\mu_n\times \prod_{m=1,m\neq n}^3L_{n,m}$ matrix $\mathbf{S}_n$ for which
		$$\|\mathbf{Q}_n\mathbf{S}_n-\mathbf{B}_{n,(n)}\|_2\leq\sigma_{\mu_n+1}(\mathbf{B}_{n,(n)}).$$
		\STATE Set $I_n=\mu_n$ and $\mathbf{Q}_n=\mathbf{Q}_n(:,1:\mu_n)$, and compute $\mathcal{C}=\mathcal{C}\times_n\mathbf{Q}_n^\top$.
		\ENDFOR
	\end{algorithmic}
\end{algorithm}
\begin{remark}
	Note that $\mathbb{S}_3$ is the Nth order symmetric group on the set $\{1, 2, 3\}$. Since the cardinality of $\mathbb{S}_3$ is $6$, choosing an optimal processing order is an open problem. In practice, the processing order is chosen with $I_{p_1}\geq I_{p_2}\geq I_{p_3}$.
\end{remark}

\subsection{Computational complexity analysis}

In this paper, for clarity, we assume that $I_1= I_2= I_3= I$, $\mu_1=\mu_2=\mu_3=\mu$ and $L_{n,1}= L_{n,2}= L_{n,3}= L$ with $m=1,2,3$ in complexity estimates\footnote{We can also assume that $I_1\sim I_2\sim I_3\sim I$, $\mu_1\sim\mu_2\sim\mu_3\sim\mu$ and $L_{n,1}\sim L_{n,2}\sim L_{n,3}\sim L$ in complexity estimates \cite[Page A2]{GOS_SISC_2012}, where $I_n\sim I$ means $I_n=\alpha_n I$ for some constant $\alpha_n$.}.

To compute the number of floating points operations in Algorithm \ref{SUB:alg3}, we evaluate the complexity of each step:
\begin{enumerate}
	\item[(a)] Generating six standard Gaussian matrices requires $6IL$ operations.
	\item[(b)] Computing three product tensors $\mathcal{B}_{n}\ (n=1,2,3)$ needs $6(L I^3+L^2 I^2)$ operations for the tensor $\mathcal{A}$.
	\item[(c)] Forming the mode-$n$ unfolding $\mathbf{B}_{n,(n)}$ requires $O(IL^2)$ operations.
	\item[(d)] Computing $\mathbf{Q}_n$ requires $O(IL^4)$ operations with $n=1,2,3$.
	\item[(e)] For each $n$, selecting the first $\mu$ columns (we do not modify them) requires $O(1)$ operations.
\end{enumerate}
By summing up the complexities of all the steps above, then Algorithm \ref{SUB:alg3} necessitates
\begin{equation*}
6(IL+L I^3+L^2 I^2)+O(IL^2+IL^4)
\end{equation*}
operations for tensor $\mathcal{A}$.

In order to compute the number of floating points operations in Algorithm \ref{SUB:alg8}, we set $p_1=1$, $p_2=2$ and $p_3=3$.

For the case of $n=1$, generating two standard Gaussian matrices requires $2IL$ operations, computing the product tensor $\mathcal{B}_{1}$ needs $2(I^3L+I^2L^2)$ operations and computing $\mathcal{C}$ requires $2I^3\mu$ operations. For the case of $n=2$, generating two standard Gaussian matrices requires $I(L+\mu)$ operations, computing the product tensor $\mathcal{B}_{1}$ needs $2(LI^2\mu+I^2L^2)$ operations and computing $\mathcal{C}$ requires $2I^2\mu^2$ operations. For the case of $n=3$, generating two standard Gaussian matrices requires $2\mu L$ operations and computing the product tensor $\mathcal{B}_{1}$ needs $2(LI\mu^2+IL^2\mu)$ operations.

Note that for each $n$, the number of entries of $\mathcal{B}_n$ in Algorithm \ref{SUB:alg8} is $IL^2$, then for each $n$, we have
\begin{enumerate}[(i)]
	\item forming the mode-$n$ unfolding $\mathbf{B}_{n,(n)}$ requires $O(IL^2)$ operations;
	\item computing $\mathbf{Q}_n$ requires $O(IL^4)$ operations;
	\item selecting the first $\mu$ columns (we do not modify them) requires $O(1)$ operations.
\end{enumerate}

By summing up the complexities of all the steps above, then Algorithm \ref{SUB:alg8} necessitates
\begin{equation*}
\begin{split}
&2(LI\mu^2+IL^2\mu+2LI^2\mu+I^2\mu^2+\mu I^3+2I^2L^2+L I^3)\\
&+3I(L+\mu)+O(IL^2+IL^4)
\end{split}
\end{equation*}
operations for tensor $\mathcal{A}$.

Note that the main difference between Algorithms \ref{SUB:alg3} and \ref{SUB:alg8} is that the temporary tensor $\mathcal{C}$ are updated after each $n$. We illustrate the difference via an example. The test tensor is defined as $\mathcal{A}={\rm sptenrand}([400,400,400],8000)\in\mathbb{R}^{400\times 400\times 400}$, where ${\rm sptenrand}([400,400,400],8000)$ creates a random sparse tensor in $\mathbb{R}^{400\times 400\times 400}$ with approximately $8000$ nonzero entries \cite{tensortool}. Figure \ref{SUB:fig3add} shows that Algorithm \ref{SUB:alg8} is more efficient than Algorithm \ref{SUB:alg3} for computing low multilinear rank approximations. In the following, Algorithm \ref{SUB:alg8} is denoted as Tucker-SVD.
\begin{figure}[htb]
	\centering
	\includegraphics[width=7in, height=2.5in]{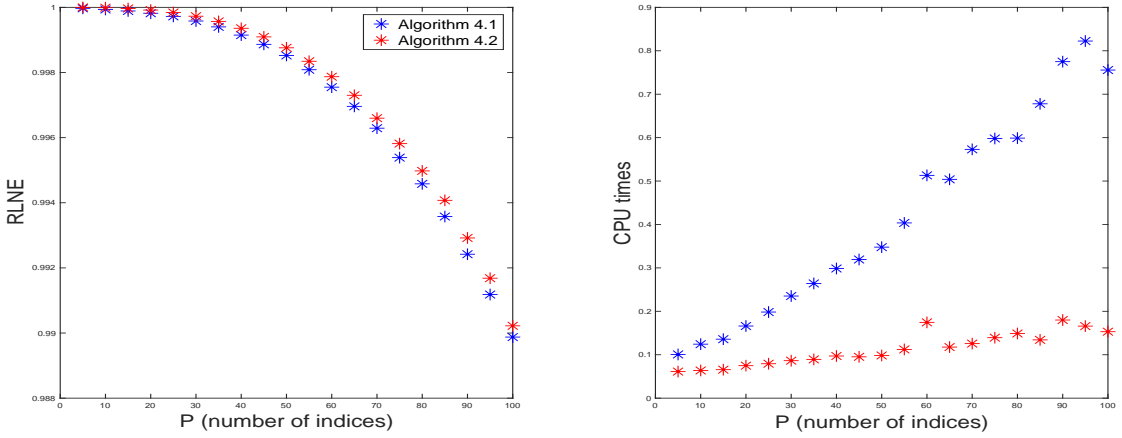}\\
	\caption{Numerical simulation results of applying Algorithms \ref{SUB:alg3} and \ref{SUB:alg8} to  tensor $\mathcal{A}$ with $P=5,10,\dots,100$ and $I=400$. Note that RLNE in the left part is defined in (\ref{SUB:eqn21}).}\label{SUB:fig3add}
\end{figure}
\subsection{Comparison with the existing randomized algorithms}
\label{SUB:sect41}

Suppose that the multilinear rank of $\mathcal{A}\in\mathbb{R}^{I_1\times I_2\times I_3}$ is given as $\{\mu_1,\mu_2,\mu_3\}$, then Algorithm 3.2 in  \cite{che2018randomized} can be represented as follows:
\begin{algorithmic}[1]
	\STATE Set $L_1'\geq\mu_1+K$, $L_2'\geq\mu_2+K$ and $L_3'\geq\mu_3+K$, where $K$ is an oversampling parameter.
	\STATE Set the temporary tensor: $\mathcal{C}=\mathcal{A}$.
	\FOR {$n=p_1,p_2,p_3$}
	\STATE Compute ${\bf B}_{n,(n)}={\bf C}_{(n)}{\bf\Omega}_{(n)}$, where ${\bf \Omega}_{(n)}={\bf \Omega}_{1}'\odot\dots\odot{\bf \Omega}_{n-1}'\odot{\bf\Omega}_{n+1}'\odot\dots\odot{\bf\Omega}_{3}'$ and ${\bf\Omega}_{m}'\in\mathbb{R}^{I_m\times L_m'}$ is a standard Gaussian matrix with $m\neq n$ and $m=1,2,3$.
	\STATE Compute ${\bf Q}_n$ as an orthonormal basis of ${\bf Z}_{(n)}$ by using the QR decomposition and let ${\bf Q}_n={\bf Q}_n(:,1:\mu_n)$.
	\STATE Set $\mathcal{C}=\mathcal{C}\times {\bf Q}_n^{\top}$ and let $I_n=\mu_n$.
	\ENDFOR
\end{algorithmic}

We also list the Randomized Tucker decomposition \cite[Algorithm 2]{decomposition_big_tensor} as follows:
\begin{algorithmic}[1]
	\STATE Set $L_1'\geq\mu_1+K$, $L_2'\geq\mu_2+K$ and $L_3'\geq\mu_3+K$, where $K$ is an oversampling parameter.
	\STATE Set  the temporary tensor: $\mathcal{C}=\mathcal{A}$.
	\FOR {$n=p_1,p_2,p_3$}
	\STATE Compute ${\bf B}_{n,(n)}={\bf C}_{(n)}{\bf\Omega}_{(n)}$, where ${\bf \Omega}_{(n)}$ is an $(\prod_{k\neq n}^3I_k)$-by-$L_n'$ standard Gaussian matrix.
	\STATE Compute ${\bf Q}_n$ as an orthonormal basis of ${\bf Z}_{(n)}$ by using the QR decomposition and let ${\bf Q}_n={\bf Q}_n(:,1:\mu_n)$.
	\STATE Set $\mathcal{C}=\mathcal{C}\times {\bf Q}_n^{\top}$ and let $I_n=\mu_n$.
	\ENDFOR
\end{algorithmic}

Algorithm \ref{SUB:alg8} can be rewritten as follows:
\begin{algorithmic}[1]
	\STATE Set $L_{1,2}L_{1,3}\geq\mu_1+K$, $L_{2,1}L_{2,3}\geq\mu_2+K$ and $L_{3,1}L_{3,2}\geq\mu_3+K$, where $K$ is an oversampling parameter.
	\STATE Set  the temporary tensor: $\mathcal{C}=\mathcal{A}$.
	\FOR {$n=p_1,p_2,p_3$}
	\STATE Compute ${\bf B}_{n,(n)}={\bf C}_{(n)}{\bf\Omega}_{(n)}$, where ${\bf \Omega}_{(n)}={\bf \Omega}_{1}'\otimes\dots\otimes{\bf \Omega}_{n-1}'\times{\bf\Omega}_{n+1}'\otimes\dots\otimes{\bf\Omega}_{3}'$ and ${\bf\Omega}_{n}'\in\mathbb{R}^{L_{n,m}\times I_m}$ is a standard Gaussian matrix with $m\neq n$ and $m=1,2,3$.
	\STATE Compute ${\bf Q}_n$ as an orthonormal basis of ${\bf Z}_{(n)}$ by using singular value decomposition and let ${\bf Q}_n={\bf Q}_n(:,1:\mu_n)$.
	\STATE Set $\mathcal{C}=\mathcal{C}\times {\bf Q}_n^{\top}$ and let $I_n=\mu_n$.
	\ENDFOR
\end{algorithmic}

The main difference among Algorithm \ref{SUB:alg8}, Algorithm 3.2 in  \cite{che2018randomized} and Algorithm 2 in \cite{decomposition_big_tensor} is how to generate the matrix ${\bf B}_{n,(n)}$ for each $n$. For all $n$, generating six standard Gaussian matrices requires $3I(L+\mu)$ operations for Algorithm \ref{SUB:alg8}, $3I(L'+\mu)$ operations for Algorithm 3.2 in  \cite{che2018randomized} and $I^2L'+IL'\mu+L'\mu^2$ for Algorithm 2 in \cite{decomposition_big_tensor}, where we assume that $L_1'=L_2'=L_3'=L'>L$.

\section{Proof for main theorems}\label{SUB:sect5}

In this section, we provide the proof for our main theorem.

\subsection{Some lemmas}

In this section, we obtain some prerequisite results for proving Theorem \ref{SUB:thm6}.

\begin{lemma}\label{SUB:lemma1add}
	Let $I$, $J$ and $K$ be three positive integers such that $K<J<I$. Suppose that $\mathbf{Q}\in\mathbb{R}^{I\times K}$ is orthonormal. For a given $\mathbf{A}\in\mathbb{R}^{I\times J}$, we have
	$$\sigma_{\max}(\mathbf{Q}^\top\mathbf{A})\leq\sigma_{\max}(\mathbf{A}),\quad
	\sigma_{\min}(\mathbf{Q}^\top\mathbf{A})\geq\sigma_{\min}(\mathbf{A}).$$
\end{lemma}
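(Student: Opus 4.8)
The plan is to reduce everything to the singular value decomposition of $\mathbf{A}$ together with the variational (Courant--Fischer) characterizations of the extreme singular values. First I would record the only structural facts I need about $\mathbf{Q}$: since its columns are orthonormal, $\mathbf{Q}^\top$ has spectral norm $\|\mathbf{Q}^\top\|_2=1$ and $\mathbf{Q}$ acts as an isometry, i.e. $\|\mathbf{Q}\mathbf{u}\|_2=\|\mathbf{u}\|_2$ for every $\mathbf{u}\in\mathbb{R}^K$.

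The upper bound on the largest singular value is then immediate from submultiplicativity of the spectral norm:
$$\sigma_{\max}(\mathbf{Q}^\top\mathbf{A})=\|\mathbf{Q}^\top\mathbf{A}\|_2\leq\|\mathbf{Q}^\top\|_2\,\|\mathbf{A}\|_2=\|\mathbf{A}\|_2=\sigma_{\max}(\mathbf{A}).$$
Equivalently, one may extend $\mathbf{Q}$ to a full orthogonal matrix $[\mathbf{Q},\mathbf{Q}_\perp]\in\mathbb{R}^{I\times I}$, observe that $[\mathbf{Q},\mathbf{Q}_\perp]^\top\mathbf{A}$ has exactly the same singular values as $\mathbf{A}$, and note that $\mathbf{Q}^\top\mathbf{A}$ is the leading row block of this rotated matrix; since deleting rows cannot increase any singular value, the first inequality follows again.

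For the lower bound on the smallest singular value I would pass to the variational form. Because $K<J$, the matrix $\mathbf{Q}^\top\mathbf{A}$ is $K\times J$ and $\sigma_{\min}(\mathbf{Q}^\top\mathbf{A})^2=\lambda_{\min}\big((\mathbf{Q}^\top\mathbf{A})(\mathbf{Q}^\top\mathbf{A})^\top\big)$, which yields
$$\sigma_{\min}(\mathbf{Q}^\top\mathbf{A})=\min_{\|\mathbf{u}\|_2=1}\|\mathbf{A}^\top\mathbf{Q}\mathbf{u}\|_2=\min_{\mathbf{v}\in\mathrm{range}(\mathbf{Q}),\,\|\mathbf{v}\|_2=1}\|\mathbf{A}^\top\mathbf{v}\|_2,$$
where the second equality uses the isometry property to let $\mathbf{v}=\mathbf{Q}\mathbf{u}$ range over the unit sphere of $\mathrm{range}(\mathbf{Q})$. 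The claimed inequality then amounts to comparing this restricted minimum with the global quantity $\sigma_{\min}(\mathbf{A})$ read off from the SVD $\mathbf{A}=\mathbf{U}\mathbf{\Sigma}\mathbf{V}^\top$; substituting the SVD reduces the statement to a bound on $\sigma_{\min}(\mathbf{W}\mathbf{\Sigma})$ from below, where $\mathbf{W}=\mathbf{Q}^\top\mathbf{U}$.

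I expect this final comparison to be the main obstacle, and it is where I would concentrate the argument. The point is that restricting the minimization to $\mathrm{range}(\mathbf{Q})$ rather than to all of $\mathbb{R}^I$ does not by itself prevent the smallest singular value from dropping, so the proof must exploit the orthonormality of $\mathbf{Q}$ together with the dimension counting $K<J<I$ to control how $\mathrm{range}(\mathbf{Q})$ meets the singular subspaces of $\mathbf{A}$. Isolating exactly which relation among $\mathbf{Q}$, $\mathbf{U}$ and the dimensions forces $\sigma_{\min}(\mathbf{W}\mathbf{\Sigma})$ to remain at least $\sigma_{\min}(\mathbf{A})$ is the crux; the two preceding paragraphs are routine, and all the real content of the lemma lives in this lower bound for the compressed matrix.
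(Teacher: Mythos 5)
Your first inequality is proved correctly, and more cleanly than in the paper: submultiplicativity of the spectral norm (or your alternative row-deletion interlacing argument) settles $\sigma_{\max}(\mathbf{Q}^\top\mathbf{A})\leq\sigma_{\max}(\mathbf{A})$. Your reduction of the second inequality is also correct: since $\mathbf{Q}^\top\mathbf{A}$ is $K\times J$ with $K<J$, one has $\sigma_{\min}(\mathbf{Q}^\top\mathbf{A})=\min\{\|\mathbf{A}^\top\mathbf{v}\|_2:\mathbf{v}\in\mathrm{range}(\mathbf{Q}),\ \|\mathbf{v}\|_2=1\}$. But the proposal stops at what you call the crux, so as written it is not a proof; and, more importantly, the crux cannot be filled, because the second inequality is false in general. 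Your own reformulation shows why: $\ker(\mathbf{A}^\top)$ has dimension at least $I-J\geq 1$, and nothing in the hypotheses keeps $\mathrm{range}(\mathbf{Q})$ away from it. Concretely, with $I=3$, $J=2$, $K=1$, let $\mathbf{A}$ consist of the first two columns of $\mathbf{I}_3$ (so $\sigma_{\min}(\mathbf{A})=1$) and let $\mathbf{Q}=\mathbf{e}_3$; then $\mathbf{Q}^\top\mathbf{A}$ is the $1\times 2$ zero matrix and $\sigma_{\min}(\mathbf{Q}^\top\mathbf{A})=0<1=\sigma_{\min}(\mathbf{A})$. Deterministically, orthonormality of $\mathbf{Q}$ and the dimension count $K<J<I$ only yield the opposite comparison $\sigma_{\min}(\mathbf{Q}^\top\mathbf{A})\leq\sigma_{K}(\mathbf{A})$ (interlacing), so the relation among $\mathbf{Q}$, $\mathbf{U}$ and the dimensions that you hoped would force the lower bound does not exist.

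You should also know that the paper's own proof is fallacious, so you are not missing a hidden idea. It starts from the identity $\sigma_{\min}(\mathbf{M})=\min_{\mathbf{u},\mathbf{v}}\,\mathbf{u}^\top\mathbf{M}\mathbf{v}/(\|\mathbf{u}\|_2\|\mathbf{v}\|_2)$, but that double minimum equals $-\sigma_{\max}(\mathbf{M})$, not $\sigma_{\min}(\mathbf{M})$; and once a correct variational formula is used (a min--max formula, or your restricted minimum of $\|\mathbf{A}^\top\mathbf{v}\|_2$), the paper's step of enlarging the feasible set from $\mathrm{range}(\mathbf{Q})$ to all of $\mathbb{R}^I$ produces $\min_{\mathbf{v}\in\mathbb{R}^I,\,\|\mathbf{v}\|_2=1}\|\mathbf{A}^\top\mathbf{v}\|_2=0$, which is vacuous rather than equal to $\sigma_{\min}(\mathbf{A})$. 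What is actually needed where this lemma is invoked (the bound on $\|\mathbf{H}^\dagger\|_2$ in the proof of Theorem \ref{SUB:thm9}, with $\mathbf{Q}=\mathbf{V}(:,1:\mu_1)$ and $\mathbf{A}$ the random test matrix) is a probabilistic statement: when $\mathbf{A}$ has i.i.d. Gaussian entries and is independent of the orthonormal matrix $\mathbf{Q}$, rotation invariance makes $\mathbf{Q}^\top\mathbf{A}$ again an i.i.d. Gaussian $K\times J$ matrix, whose smallest singular value admits a high-probability lower bound of the type in Theorem \ref{SUB:thm5}. That probabilistic version is the correct lemma to formulate and prove; the deterministic statement as given admits the counterexample above.
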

\begin{proof}
	The proof is straightforward, but tedious, as follows. By the definition of singular values of matrices, we have
	\begin{equation*}
	\begin{split}
	&\sigma_{\min}(\mathbf{Q}^\top\mathbf{A})
	=\min_{\mathbf{u}\in\mathbb{R}^{K},\mathbf{u}\neq\mathbf{0}_K;\atop
		\mathbf{v}\in\mathbb{R}^{J},\mathbf{v}\neq\mathbf{0}_J}
	\frac{\mathbf{u}^\top\mathbf{Q}^\top\mathbf{A}\mathbf{v}}
	{\|\mathbf{u}\|_2\|\mathbf{v}\|_2}\\
	&=\min_{\mathbf{u}\in\mathbb{R}^{K},\mathbf{u}\neq\mathbf{0}_K;\atop
		\mathbf{v}\in\mathbb{R}^{J},\mathbf{v}\neq\mathbf{0}_J}
	\frac{(\mathbf{Q}\mathbf{u})^\top\mathbf{A}\mathbf{v}}
	{\|\mathbf{u}\|_2\|\mathbf{v}\|_2}=\min_{\mathbf{u}\in\mathbb{R}^{K},\mathbf{u}\neq\mathbf{0}_K;\atop
		\mathbf{v}\in\mathbb{R}^{J},\mathbf{v}\neq\mathbf{0}_J}
	\frac{(\mathbf{Q}\mathbf{u})^\top\mathbf{A}\mathbf{v}}
	{\|\mathbf{Q}\mathbf{u}\|_2\|\mathbf{v}\|_2}\\
	&\geq
	\min_{\widetilde{\mathbf{u}}\in\mathbb{R}^{I},\widetilde{\mathbf{u}}\neq\mathbf{0}_I;\atop
		\widetilde{\mathbf{v}}\in\mathbb{R}^{J},\widetilde{\mathbf{v}}\neq\mathbf{0}_J}
	\frac{\widetilde{\mathbf{u}}^\top\mathbf{A}\widetilde{\mathbf{v}}}
	{\|\widetilde{\mathbf{u}}\|_2\|\widetilde{\mathbf{v}}\|_2}=\sigma_{\min}(\mathbf{A}).
	\end{split}
	\end{equation*}
	The third equality holds for the fact that $\|\mathbf{Q}\mathbf{u}\|_2=\|\mathbf{u}\|_2$ and the inequality holds for the basic results of optimization theory. Similarly, we can prove $\sigma_{\max}(\mathbf{Q}^\top\mathbf{A})\leq\sigma_{\max}(\mathbf{A})$.
\end{proof}

For two given $\mathbf{A}\in\mathbb{R}^{I\times J}$ and $\mathbf{G}\in\mathbb{R}^{J\times K}$, the following lemma states the singular value of the product $\mathbf{A}\mathbf{G}$ are at most $\|\mathbf{G}\|_2$ times greater than the corresponding singular values of $\mathbf{A}$.

\begin{lemma}{{\bf (\cite[Lemma 3.9]{woolfe2008a})}}
	\label{SUB:lem5}
	Suppose that $\mathbf{A}\in\mathbb{R}^{I\times J}$ and $\mathbf{G}\in\mathbb{R}^{J\times K}$. Then for all $k=1,2,\dots,\min\{I,J,K\}-1,\min\{I,J,K\}$, the $k$th greatest singular value $\sigma_k(\mathbf{A}\mathbf{G})$ of $\mathbf{A}\mathbf{G}$ is at most a factor of $\|\mathbf{G}\|_2$ times greater than the $k$th greatest singular value $\sigma_k(\mathbf{A})$ of $\mathbf{A}$, that is,
	\begin{equation*}
	\sigma_k(\mathbf{A}\mathbf{G})\leq\|\mathbf{G}\|_2\sigma_k(\mathbf{A}).
	\end{equation*}
\end{lemma}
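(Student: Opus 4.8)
The plan is to prove the inequality through the Courant--Fischer min--max characterization of singular values, which converts the product $\mathbf{A}\mathbf{G}$ into a comparison over cleverly chosen subspaces. Recall that for any matrix $\mathbf{M}\in\mathbb{R}^{m\times n}$ and any $k$ with $1\leq k\leq\min\{m,n\}$,
$$\sigma_k(\mathbf{M})=\min_{\substack{S\subseteq\mathbb{R}^n\\ \dim S=k-1}}\ \max_{\substack{\mathbf{x}\in S^\perp\\ \|\mathbf{x}\|_2=1}}\|\mathbf{M}\mathbf{x}\|_2,$$
where $S^\perp$ denotes the orthogonal complement of $S$ in $\mathbb{R}^n$. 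I would take this identity as known, since it is the standard variational formula for singular values, and apply it to $\mathbf{M}=\mathbf{A}\mathbf{G}\in\mathbb{R}^{I\times K}$.

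First I would fix a subspace associated with $\mathbf{A}$. Let $S\subseteq\mathbb{R}^J$ be the span of the $k-1$ leading right singular vectors of $\mathbf{A}$; then $\dim S=k-1$ and, by the defining property of the singular value decomposition, $\|\mathbf{A}\mathbf{y}\|_2\leq\sigma_k(\mathbf{A})\,\|\mathbf{y}\|_2$ for every $\mathbf{y}\in S^\perp$. Next I would transport this subspace back through $\mathbf{G}$ by setting $T=\mathbf{G}^\top S\subseteq\mathbb{R}^K$. The key observation is that if $\mathbf{x}\perp T$, then for every $\mathbf{s}\in S$ we have $\langle\mathbf{G}\mathbf{x},\mathbf{s}\rangle=\langle\mathbf{x},\mathbf{G}^\top\mathbf{s}\rangle=0$, so $\mathbf{G}\mathbf{x}\in S^\perp$. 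Consequently, for any unit vector $\mathbf{x}\in T^\perp$,
$$\|\mathbf{A}\mathbf{G}\mathbf{x}\|_2\leq\sigma_k(\mathbf{A})\,\|\mathbf{G}\mathbf{x}\|_2\leq\sigma_k(\mathbf{A})\,\|\mathbf{G}\|_2\,\|\mathbf{x}\|_2=\sigma_k(\mathbf{A})\,\|\mathbf{G}\|_2,$$
where the second inequality is the submultiplicativity $\|\mathbf{G}\mathbf{x}\|_2\leq\|\mathbf{G}\|_2\|\mathbf{x}\|_2$.

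Finally I would feed $T$ into the min--max formula for $\sigma_k(\mathbf{A}\mathbf{G})$. Since $\dim T=\dim(\mathbf{G}^\top S)\leq k-1$, I can enlarge $T$ to a subspace $\widetilde{T}$ of dimension exactly $k-1$; enlarging $T$ only shrinks its orthogonal complement, so the bound just derived continues to hold over $\widetilde{T}^\perp$. Because the min--max formula takes the minimum over all $(k-1)$-dimensional subspaces, the value attained at $\widetilde{T}$ is an upper bound, giving
$$\sigma_k(\mathbf{A}\mathbf{G})\leq\max_{\substack{\mathbf{x}\in\widetilde{T}^\perp\\ \|\mathbf{x}\|_2=1}}\|\mathbf{A}\mathbf{G}\mathbf{x}\|_2\leq\|\mathbf{G}\|_2\,\sigma_k(\mathbf{A}),$$
which is the claim. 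The only genuinely delicate point is the bookkeeping in this last step: $\mathbf{G}^\top$ may collapse $S$ to a strictly lower-dimensional image, so one must argue that passing to a $(k-1)$-dimensional enlargement $\widetilde{T}$ is legitimate and does not break the norm estimate. Everything else follows directly from the variational characterization and submultiplicativity of the spectral norm.
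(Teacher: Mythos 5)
Your proof is correct. Note, however, that the paper does not prove this lemma at all: it is imported verbatim, with citation, from Woolfe et al.\ (Lemma 3.9 of that reference), so there is no internal proof to compare against; your argument therefore supplies something the paper leaves to the literature. The Courant--Fischer route you take is sound and self-contained: the variational formula you quote is the standard one (a subspace $S$ of dimension $k-1$ has orthogonal complement of dimension $n-k+1$); choosing $S$ as the span of the $k-1$ leading right singular vectors of $\mathbf{A}$ indeed gives $\|\mathbf{A}\mathbf{y}\|_2\leq\sigma_k(\mathbf{A})\|\mathbf{y}\|_2$ on $S^\perp$; the observation that $\mathbf{x}\perp\mathbf{G}^\top S$ forces $\mathbf{G}\mathbf{x}\in S^\perp$ is exactly right; and you handle the one delicate point correctly, since $\mathbf{G}^\top S$ may have dimension strictly less than $k-1$, but enlarging $T$ to a $(k-1)$-dimensional $\widetilde{T}$ only shrinks the orthogonal complement ($\widetilde{T}^\perp\subseteq T^\perp$), so the pointwise bound survives and $\widetilde{T}$ is an admissible competitor in the minimum. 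For comparison, a shorter route to the same inequality uses the characterization of $\sigma_k$ as the spectral-norm distance to matrices of rank at most $k-1$: writing $\mathbf{A}_{k-1}$ for the best rank-$(k-1)$ approximation of $\mathbf{A}$, the matrix $\mathbf{A}_{k-1}\mathbf{G}$ has rank at most $k-1$, whence
\begin{equation*}
\sigma_k(\mathbf{A}\mathbf{G})\leq\|\mathbf{A}\mathbf{G}-\mathbf{A}_{k-1}\mathbf{G}\|_2
\leq\|\mathbf{A}-\mathbf{A}_{k-1}\|_2\,\|\mathbf{G}\|_2=\sigma_k(\mathbf{A})\,\|\mathbf{G}\|_2.
\end{equation*}
Both arguments are elementary; yours avoids invoking the Eckart--Young/Weyl distance characterization, at the cost of the subspace bookkeeping you already flagged.
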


Similar to Lemma \ref{SUB:lem5}, we have the following corollary.
\begin{corollary}
	\label{SUB:lem6}
	Suppose that $\mathbf{A}\in\mathbb{R}^{I\times J}$ and $\mathbf{G}\in\mathbb{R}^{J\times K}$ with $K\leq\min(I,J)$. Then for all $k=1,2,\dots,\min(I,J,K)-1,\min(I,J,K)$, we have
	\begin{equation*}
	\sum_{i=k}^K\sigma_i(\mathbf{A}\mathbf{G})^2
	\leq\|\mathbf{G}\|_2^2\sum_{j=k}^{\min(I,J)}\sigma_j(\mathbf{A})^2.
	\end{equation*}
\end{corollary}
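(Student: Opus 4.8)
The plan is to reduce the corollary directly to the term-by-term bound already established in Lemma \ref{SUB:lem5}. Since $K\leq\min(I,J)\leq I$, the product $\mathbf{A}\mathbf{G}\in\mathbb{R}^{I\times K}$ has exactly $\min(I,K)=K$ singular values, so each summand $\sigma_i(\mathbf{A}\mathbf{G})^2$ on the left-hand side is well-defined for $i=k,\dots,K$, and likewise $\sigma_j(\mathbf{A})$ is defined for all $j$ up to $\min(I,J)$. I would first record this index bookkeeping to be sure every quantity appearing in the claimed inequality makes sense.

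The core step is to apply Lemma \ref{SUB:lem5} index by index. For each fixed $i$ with $k\leq i\leq K\leq\min(I,J,K)$, Lemma \ref{SUB:lem5} gives $\sigma_i(\mathbf{A}\mathbf{G})\leq\|\mathbf{G}\|_2\,\sigma_i(\mathbf{A})$; squaring (both sides being nonnegative) yields $\sigma_i(\mathbf{A}\mathbf{G})^2\leq\|\mathbf{G}\|_2^2\,\sigma_i(\mathbf{A})^2$. Summing these inequalities over $i=k,\dots,K$ then produces
\begin{equation*}
\sum_{i=k}^K\sigma_i(\mathbf{A}\mathbf{G})^2
\leq\|\mathbf{G}\|_2^2\sum_{i=k}^K\sigma_i(\mathbf{A})^2.
\end{equation*}

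Finally, I would extend the range of the right-hand summation from $K$ up to $\min(I,J)$. Because $K\leq\min(I,J)$ and every singular value $\sigma_j(\mathbf{A})^2\geq0$, adjoining the extra nonnegative terms $\sigma_{K+1}(\mathbf{A})^2,\dots,\sigma_{\min(I,J)}(\mathbf{A})^2$ can only increase the sum, so
\begin{equation*}
\|\mathbf{G}\|_2^2\sum_{i=k}^K\sigma_i(\mathbf{A})^2
\leq\|\mathbf{G}\|_2^2\sum_{j=k}^{\min(I,J)}\sigma_j(\mathbf{A})^2.
\end{equation*}
Chaining the two displays gives the stated bound. There is no genuine obstacle here: the result is an immediate consequence of Lemma \ref{SUB:lem5} together with the monotonicity of a sum of nonnegative terms, and the only point requiring a moment of care is the index arithmetic that guarantees $\mathbf{A}\mathbf{G}$ indeed has $K$ singular values so that the left-hand sum is meaningful.
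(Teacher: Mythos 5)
Your proof is correct and follows exactly the route the paper intends: the paper states this corollary without proof as an immediate consequence of Lemma \ref{SUB:lem5}, namely the termwise bound $\sigma_i(\mathbf{A}\mathbf{G})\leq\|\mathbf{G}\|_2\,\sigma_i(\mathbf{A})$ squared and summed over $i=k,\dots,K$, followed by enlarging the right-hand sum to $\min(I,J)$ using nonnegativity. Your added index bookkeeping (that $K\leq\min(I,J)$ makes $\min(I,J,K)=K$, so every term is well-defined) is a sensible touch but does not change the argument.
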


The following classical lemma provides an approximation $\mathbf{Q}\mathbf{S}$ to  $\mathbf{A}\in\mathbb{R}^{I\times J}$ via an  orthonormal matrix $\mathbf{Q}\in\mathbb{R}^{I\times K}$ and $\mathbf{S}\in\mathbb{R}^{K\times J}$.

\begin{lemma}
	\label{SUB:lem10}
	Suppose that $K$, $I$ and $J$ are positive integers with $K< J$ and $J\leq I$. Let $\mathbf{A}\in\mathbb{R}^{I\times J}$. Then there exist an orthonormal matrix $\mathbf{Q}\in\mathbb{R}^{I\times K}$ and $\mathbf{S}\in\mathbb{R}^{K\times J}$ such that
	$$\|\mathbf{Q}\mathbf{S}-\mathbf{A}\|_F\leq\Delta_{K+1}(\mathbf{A}),$$
	with $\Delta_{K+1}(\mathbf{A}):=(\sum_{i=K+1}^J\sigma_i(\mathbf{A})^2)^{1/2}$, where $\sigma_{i}(\mathbf{A})$ is the $i$th greatest singular value of $\mathbf{A}$ for all $i=1,2,\dots, J$.
\end{lemma}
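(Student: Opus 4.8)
The plan is to prove this classical Eckart--Young--Mirsky statement by exhibiting the truncated singular value decomposition, which in fact achieves equality rather than merely the claimed inequality. First I would compute a thin SVD of $\mathbf{A}$, writing $\mathbf{A}=\mathbf{U}\bm{\Sigma}\mathbf{V}^\top$ with $\mathbf{U}\in\mathbb{R}^{I\times J}$ having orthonormal columns $\mathbf{u}_1,\dots,\mathbf{u}_J$, $\mathbf{V}\in\mathbb{R}^{J\times J}$ orthogonal with columns $\mathbf{v}_1,\dots,\mathbf{v}_J$, and $\bm{\Sigma}=\mathrm{diag}(\sigma_1,\dots,\sigma_J)$ with $\sigma_1\geq\cdots\geq\sigma_J\geq0$, so that $\mathbf{A}=\sum_{i=1}^J\sigma_i\mathbf{u}_i\mathbf{v}_i^\top$.

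Next I would choose the two factors explicitly. Set $\mathbf{Q}:=\mathbf{U}(:,1:K)=[\mathbf{u}_1,\dots,\mathbf{u}_K]\in\mathbb{R}^{I\times K}$; since its columns are orthonormal, $\mathbf{Q}^\top\mathbf{Q}=\mathbf{I}_K$, so $\mathbf{Q}$ is orthonormal as required, and here the hypothesis $K<J\leq I$ guarantees $\mathbf{Q}$ has the correct shape. Set $\mathbf{S}:=\mathbf{Q}^\top\mathbf{A}\in\mathbb{R}^{K\times J}$. Using $\mathbf{u}_k^\top\mathbf{u}_i=\delta_{ki}$ one checks that $\mathbf{Q}\mathbf{S}=\mathbf{Q}\mathbf{Q}^\top\mathbf{A}=\sum_{i=1}^K\sigma_i\mathbf{u}_i\mathbf{v}_i^\top$, i.e.\ $\mathbf{Q}\mathbf{S}$ is exactly the rank-$K$ truncation of the SVD.

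Then I would compute the residual directly. Subtracting gives $\mathbf{A}-\mathbf{Q}\mathbf{S}=\sum_{i=K+1}^J\sigma_i\mathbf{u}_i\mathbf{v}_i^\top$, and the rank-one matrices $\mathbf{u}_i\mathbf{v}_i^\top$ are orthonormal in the Frobenius inner product, because $\langle\mathbf{u}_i\mathbf{v}_i^\top,\mathbf{u}_j\mathbf{v}_j^\top\rangle=(\mathbf{u}_i^\top\mathbf{u}_j)(\mathbf{v}_i^\top\mathbf{v}_j)=\delta_{ij}$. Hence by the Pythagorean identity, $\|\mathbf{A}-\mathbf{Q}\mathbf{S}\|_F^2=\sum_{i=K+1}^J\sigma_i^2=\Delta_{K+1}(\mathbf{A})^2$, so that $\|\mathbf{Q}\mathbf{S}-\mathbf{A}\|_F=\Delta_{K+1}(\mathbf{A})$, which establishes the claimed bound.

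There is no genuine obstacle here: the result is the Frobenius-norm Eckart--Young--Mirsky theorem, and the explicit truncated-SVD construction delivers the bound directly, so I would not even need the optimality half of that theorem. The only points demanding minor care are the bookkeeping that $\mathbf{Q}\mathbf{Q}^\top$ is the orthogonal projector onto $\mathrm{span}\{\mathbf{u}_1,\dots,\mathbf{u}_K\}$ and the orthonormality of the dyads $\mathbf{u}_i\mathbf{v}_i^\top$ under the Frobenius inner product; both are routine.
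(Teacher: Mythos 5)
Your proof is correct and follows essentially the same route as the paper: both construct $\mathbf{Q}$ from the first $K$ left singular vectors and take $\mathbf{Q}\mathbf{S}$ to be the rank-$K$ SVD truncation (your $\mathbf{S}=\mathbf{Q}^\top\mathbf{A}$ coincides with the paper's $\bm{\Sigma}(1{:}K,1{:}K)\mathbf{V}(:,1{:}K)^\top$). The only difference is cosmetic: the paper cites the Eckart--Young best-approximation property to bound the residual, while you verify it directly via the Pythagorean identity for the orthonormal dyads, which in fact yields equality.
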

\begin{proof}
	The proof is similar to that of Lemma 3.5 in \cite{martinsson2011a}. We start by form an SVD of $\mathbf{A}$
	\begin{equation*}
	\mathbf{A}=\mathbf{U}\bm{\Sigma}\mathbf{V}^\top,
	\end{equation*}
	where $\mathbf{U}\in\mathbb{R}^{I\times J}$ is orthonormal, $\mathbf{V}\in\mathbb{R}^{J\times J}$ is orthogonal, and $\bm{\Sigma}\in^{I\times J}$ is diagonal with nonnegative diagonal entries. Let $\mathbf{Q}=\mathbf{U}(:,1:K)$ and $\mathbf{S}=\bm{\Sigma}(1:K,1:K)\mathbf{V}(:,1:K)^\top$. Note that $\mathbf{A}_K=\mathbf{U}(:;1:K)\bm{\Sigma}(1:K,1:K)\mathbf{V}(:,1:K)^\top$ is a best rank-$K$ approximation of $\mathbf{A}$. Then we have
	$$\|\mathbf{Q}\mathbf{S}-\mathbf{A}\|_F=\|\mathbf{A}_K-\mathbf{A}\|_F\leq\Delta_{K+1}(\mathbf{A}),$$
	which implies this lemma.
\end{proof}
\begin{remark}
	\label{SUB:rem2}
	In order to compute matrices $\mathbf{Q}$ and $\mathbf{S}$ in Lemma {\rm\ref{SUB:lem10}} from matrix $\mathbf{A}$, we can construct the SVD of $\mathbf{A}$, and then form $\mathbf{Q}$ and $\mathbf{S}$ from this decomposition. For example, details concerning the computation of the SVD can be found in  {\rm \cite[Chapter 8]{Golub}}.
\end{remark}

Without loss of generality, we assume that $n=1$. The following lemma states that the product $\mathcal{A}\times_1(\mathbf{Q}_1\mathbf{Q}_1^\top)$ of $\mathcal{A}$, $\mathbf{Q}_1$ and $\mathbf{Q}_1^\top$ is a good approximation to $\mathcal{A}$, provided that there exist matrices $\mathbf{G}_{1,m}\in\mathbb{R}^{L_{1,m}\times I_m}\ (m=2,3)$ and $\mathbf{S}_1\in\mathbb{R}^{\mu_1\times L_{1,2}L_{1,3}}$ such that
(a) $\mathbf{Q}_1$ is orthonormal; (b) $\mathbf{Q}_1\mathbf{S}_1$ is a good approximation to $(\mathcal{A}\times_2\mathbf{G}_{1,2}\times_3\mathbf{G}_{1,3})_{(1)}$; (c) there exist a matrix $\mathbf{F}\in\mathbb{R}^{L_{1,2}L_{1,3}\times I_2I_3}$ such that $\|\mathbf{F}\|_2$ is not too large, and $\mathcal{A}_{(1)}(\mathbf{G}_{1,3}\otimes\mathbf{G}_{1,2})^\top\mathbf{F}$ is a good approximation to $\mathcal{A}_{(1)}$.

\begin{lemma}
	\label{SUB:lem11}
	Suppose that $\mathcal{A}\in\mathbb{R}^{I_1\times I_2\times I_3}$, $\mathbf{Q}_1\in\mathbb{R}^{I_1\times \mu_1}$ is orthonormal with $\mu_1\leq I_1$, $\mathbf{S}_1$ is a real $\mu_1\times L_{1,2}L_{1,3}$ matrix, $\mathbf{F}$ is a real $L_{1,2}L_{1,3}\times I_2I_3$ matrix, and $\mathbf{G}_{1,m}$ is a real $L_{1,m}\times I_m$ matrix with $m=2,3$. Then
	\begin{equation}\label{SUB:eqn11}
	\begin{split}
	\left\|\mathcal{A}-\mathcal{A}\times_1(\mathbf{Q}_1\mathbf{Q}_1^\top)\right\|_F^2
	&\leq2\left\|\mathbf{A}_{(1)}(\mathbf{G}_{1,3}\otimes\mathbf{G}_{1,2})^\top\mathbf{F}
	-\mathbf{A}_{(1)}\right\|_F^2\\
	&+2\|\mathbf{F}\|_2^2\left\|\mathcal{S}_1\times_1\mathbf{Q}_1
	-\mathcal{A}\times_2\mathbf{G}_{1,2}\times_3\mathbf{G}_{1,3}\right\|_F^2,
	\end{split}
	\end{equation}
	where the entries of $\mathcal{S}_1\in\mathbb{R}^{\mu_1\times L_{1,2}\times L_{1,3}}$ are given by $\mathcal{S}_1(i_1,i_2,i_3)=s_{ij}$, with $i=i_1$ and $j=i_2+(i_3-1)L_{1,2}$
	for all $i_1=1,2,\dots,\mu_1$, $i_2=1,2,\dots,L_{1,2}$ and $i_3=1,2,\dots,L_{1,3}$.
\end{lemma}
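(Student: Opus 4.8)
The plan is to reduce the tensor identity on the left of (\ref{SUB:eqn11}) to a statement about the mode-1 unfolding, where the mode-1 product by the projector becomes ordinary left-multiplication, and then to telescope through a cleverly chosen rank-$\mu_1$ approximant. First I would record the three unfolding identities I need. Since $(\mathcal{A}\times_1\mathbf{M})_{(1)}=\mathbf{M}\mathbf{A}_{(1)}$ and the Frobenius norm is invariant under unfolding, the left-hand side equals $\|(\mathbf{I}_{I_1}-\mathbf{Q}_1\mathbf{Q}_1^\top)\mathbf{A}_{(1)}\|_F^2$. Using the mode-$n$ unfolding convention fixed in Section \ref{SUB:sect2}, I would write $\mathbf{B}_{1,(1)}:=(\mathcal{A}\times_2\mathbf{G}_{1,2}\times_3\mathbf{G}_{1,3})_{(1)}=\mathbf{A}_{(1)}(\mathbf{G}_{1,3}\otimes\mathbf{G}_{1,2})^\top$, and, because $\mathcal{S}_1$ is exactly the tensorization of $\mathbf{S}_1$ with $j=i_2+(i_3-1)L_{1,2}$, the identity $(\mathcal{S}_1\times_1\mathbf{Q}_1)_{(1)}=\mathbf{Q}_1\mathbf{S}_1$. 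These rewrite the two right-hand terms of (\ref{SUB:eqn11}) as $\|\mathbf{B}_{1,(1)}\mathbf{F}-\mathbf{A}_{(1)}\|_F^2$ and $\|\mathbf{Q}_1\mathbf{S}_1-\mathbf{B}_{1,(1)}\|_F^2$.

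The heart of the argument is to introduce the auxiliary approximant $\mathbf{Q}_1\mathbf{S}_1\mathbf{F}$, whose columns lie in the range of $\mathbf{Q}_1$. Setting $\mathbf{P}_1=\mathbf{Q}_1\mathbf{Q}_1^\top$ and using orthonormality $\mathbf{Q}_1^\top\mathbf{Q}_1=\mathbf{I}_{\mu_1}$, I would observe $(\mathbf{I}_{I_1}-\mathbf{P}_1)\mathbf{Q}_1=\mathbf{Q}_1-\mathbf{Q}_1=\mathbf{0}$, so that
\[
(\mathbf{I}_{I_1}-\mathbf{P}_1)\mathbf{A}_{(1)}=(\mathbf{I}_{I_1}-\mathbf{P}_1)\bigl(\mathbf{A}_{(1)}-\mathbf{Q}_1\mathbf{S}_1\mathbf{F}\bigr).
\]
Because $\mathbf{I}_{I_1}-\mathbf{P}_1$ is an orthogonal projection, it has spectral norm at most $1$, and combining this with $\|\mathbf{M}\mathbf{X}\|_F\leq\|\mathbf{M}\|_2\|\mathbf{X}\|_F$ gives the key bound $\|(\mathbf{I}_{I_1}-\mathbf{P}_1)\mathbf{A}_{(1)}\|_F\leq\|\mathbf{A}_{(1)}-\mathbf{Q}_1\mathbf{S}_1\mathbf{F}\|_F$.

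Next I would split the right-hand side by inserting $\mathbf{B}_{1,(1)}\mathbf{F}$ and applying the triangle inequality,
\[
\|\mathbf{A}_{(1)}-\mathbf{Q}_1\mathbf{S}_1\mathbf{F}\|_F\leq\|\mathbf{A}_{(1)}-\mathbf{B}_{1,(1)}\mathbf{F}\|_F+\|(\mathbf{B}_{1,(1)}-\mathbf{Q}_1\mathbf{S}_1)\mathbf{F}\|_F,
\]
and then bound the second summand through submultiplicativity, $\|(\mathbf{B}_{1,(1)}-\mathbf{Q}_1\mathbf{S}_1)\mathbf{F}\|_F\leq\|\mathbf{F}\|_2\,\|\mathbf{B}_{1,(1)}-\mathbf{Q}_1\mathbf{S}_1\|_F$. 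Squaring and invoking the elementary inequality $(a+b)^2\leq 2a^2+2b^2$ produces the factor-$2$ form claimed in (\ref{SUB:eqn11}); translating the matrix quantities back through the unfolding identities of the first paragraph finishes the proof.

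I do not expect a genuine obstacle: the argument is essentially a telescoping through $\mathbf{Q}_1\mathbf{S}_1\mathbf{F}$ together with the projector contraction property. The only points demanding care are purely notational, namely confirming that the Kronecker-product order in $\mathbf{B}_{1,(1)}=\mathbf{A}_{(1)}(\mathbf{G}_{1,3}\otimes\mathbf{G}_{1,2})^\top$ matches the unfolding convention of Section \ref{SUB:sect2}, and that the indexing of $\mathcal{S}_1$ is arranged so that $(\mathcal{S}_1\times_1\mathbf{Q}_1)_{(1)}=\mathbf{Q}_1\mathbf{S}_1$ holds on the nose.
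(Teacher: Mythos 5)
Your proof is correct, and it takes a cleaner route than the paper's. The paper proves the lemma by telescoping the unfolded error $\mathbf{A}_{(1)}-\mathbf{Q}_1\mathbf{Q}_1^\top\mathbf{A}_{(1)}$ into \emph{three} terms (through the intermediates $\mathbf{Q}_1\mathbf{Q}_1^\top\mathbf{A}_{(1)}(\mathbf{G}_{1,3}\otimes\mathbf{G}_{1,2})^\top\mathbf{F}$ and $\mathbf{A}_{(1)}(\mathbf{G}_{1,3}\otimes\mathbf{G}_{1,2})^\top\mathbf{F}$), bounding each term separately with $\|\mathbf{Q}_1\mathbf{Q}_1^\top\|_2\leq1$ and a second, nested telescoping through $\mathbf{Q}_1\mathbf{S}_1$; your proof instead exploits the annihilation property $(\mathbf{I}_{I_1}-\mathbf{Q}_1\mathbf{Q}_1^\top)\mathbf{Q}_1=\mathbf{0}$ to insert the single comparator $\mathbf{Q}_1\mathbf{S}_1\mathbf{F}$ for free, and then needs only one triangle inequality, submultiplicativity, and $(a+b)^2\leq2a^2+2b^2$. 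Beyond being shorter, your route is actually more rigorous than the paper's: the paper's opening step (its display (\ref{SUB:eqn8})) asserts
\begin{equation*}
\|x+y+z\|_F^2\leq\|x\|_F^2+\|y\|_F^2+\|z\|_F^2
\end{equation*}
for the three telescoping pieces, which is not a valid inequality in general (the squared triangle inequality requires a factor of $3$, or an orthogonality argument, and one can construct explicit $2\times2$ counterexamples to the paper's version); the same issue recurs in its inner telescoping, where it is harmless only because the middle term vanishes and the remaining cross term can be handled by a Pythagorean argument. Your derivation produces exactly the factor $2$ in (\ref{SUB:eqn11}) without ever invoking such a step, so it both matches the stated constant and closes the gap in the published argument. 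Your two notational caveats are also resolved correctly: with the paper's mode-$1$ unfolding convention $j=i_2+(i_3-1)I_2$, one indeed has $(\mathcal{A}\times_2\mathbf{G}_{1,2}\times_3\mathbf{G}_{1,3})_{(1)}=\mathbf{A}_{(1)}(\mathbf{G}_{1,3}\otimes\mathbf{G}_{1,2})^\top$, and the stated indexing of $\mathcal{S}_1$ makes $(\mathcal{S}_1\times_1\mathbf{Q}_1)_{(1)}=\mathbf{Q}_1\mathbf{S}_1$ hold exactly.
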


\begin{proof}
	The proof is straightforward, but tedious, as follows. By using the triangular inequality, we have
	\begin{equation}\label{SUB:eqn8}
	\begin{split}
	\left\|\mathcal{A}-\mathcal{A}\times_1(\mathbf{Q}_1\mathbf{Q}_1^\top)\right\|_F^2&\leq\left\|(\mathbf{Q}_1\mathbf{Q}_1^\top)\mathbf{A}_{(1)}(\mathbf{G}_{1,3}\otimes\mathbf{G}_{1,2})^\top\mathbf{F}
	-(\mathbf{Q}_1\mathbf{Q}_1^\top)\mathbf{A}_{(1)}\right\|_F^2\\
	&+\left\|(\mathbf{Q}_1\mathbf{Q}_1^\top)\mathbf{A}_{(1)}(\mathbf{G}_{1,3}\otimes\mathbf{G}_{1,2})^\top\mathbf{F}
	-\mathbf{A}_{(1)}(\mathbf{G}_{1,3}\otimes\mathbf{G}_{1,2})^\top\mathbf{F}\right\|_F^2\\
	&+\left\|\mathbf{A}_{(1)}(\mathbf{G}_{1,3}\otimes\mathbf{G}_{1,2})^\top\mathbf{F}
	-\mathbf{A}_{(1)}\right\|_F^2.
	\end{split}
	\end{equation}
	For the first term in the right-hand side of (\ref{SUB:eqn8}), we have
	\begin{equation*}
	\begin{split}
	\left\|(\mathbf{Q}_1\mathbf{Q}_1^\top)\mathbf{A}_{(1)}(\mathbf{G}_{1,3}\otimes\mathbf{G}_{1,2})^\top\mathbf{F}
	-(\mathbf{Q}_1\mathbf{Q}_1^\top)\mathbf{A}_{(1)}\right\|_F^2\leq\left\|\mathbf{A}_{(1)}(\mathbf{G}_{1,3}\otimes\mathbf{G}_{1,2})^\top\mathbf{F}
	-\mathbf{A}_{(1)}\right\|_F^2\|\mathbf{Q}_1\mathbf{Q}_1^\top\|_2^2.
	\end{split}
	\end{equation*}
	Since $\|\mathbf{Q}_1\mathbf{Q}_1^\top\|_2\leq1$, then
	\begin{equation}\label{SUB:eqn9}
	\begin{split}
	\left\|(\mathbf{Q}_1\mathbf{Q}_1^\top)\mathbf{A}_{(1)}(\mathbf{G}_{1,3}\otimes\mathbf{G}_{1,2})^\top\mathbf{F}
	-(\mathbf{Q}_1\mathbf{Q}_1^\top)\mathbf{A}_{(1)}\right\|_F^2\leq\left\|\mathbf{A}_{(1)}(\mathbf{G}_{1,3}\otimes\mathbf{G}_{1,2})^\top\mathbf{F}
	-\mathbf{A}_{(1)}\right\|_F^2.
	\end{split}
	\end{equation}
	Now, we provide a bound for the second term in the right-hand side of (\ref{SUB:eqn8}). Clearly, we have
	\begin{equation*}
	\begin{split}
	&\left\|(\mathbf{Q}_1\mathbf{Q}_1^\top)\mathbf{A}_{(1)}(\mathbf{G}_{1,3}\otimes\mathbf{G}_{1,2})^\top\mathbf{F}
	-\mathbf{A}_{(1)}(\mathbf{G}_{1,3}\otimes\mathbf{G}_{1,2})^\top\mathbf{F}\right\|_F^2\\
	&\leq\left\|(\mathbf{Q}_1\mathbf{Q}_1^\top)\mathbf{A}_{(1)}(\mathbf{G}_3\otimes\mathbf{G}_2)^\top
	-\mathbf{A}_{(1)}(\mathbf{G}_{1,3}\otimes\mathbf{G}_{1,2})^\top\right\|_F^2\|\mathbf{F}\|_2^2.
	\end{split}
	\end{equation*}
	It follows from the triangular inequality that
	\begin{equation*}
	\begin{split}
	&\left\|(\mathbf{Q}_1\mathbf{Q}_1^\top)\mathbf{A}_{(1)}(\mathbf{G}_{1,3}\otimes
\mathbf{G}_{1,2})^\top
-\mathbf{A}_{(1)}(\mathbf{G}_{1,3}\otimes\mathbf{G}_{1,2})^\top\right\|_F^2\\
&\leq\left\|(\mathbf{Q}_1\mathbf{Q}_1^\top)\mathbf{A}_{(1)}(\mathbf{G}_{1,3}\otimes\mathbf{G}_{1,2})^\top
	-\mathbf{Q}_1\mathbf{Q}_1^\top\mathbf{Q}_1\mathbf{S}_1\right\|_F^2\\
	&+\left\|\mathbf{Q}_1\mathbf{Q}_1^\top\mathbf{Q}_1\mathbf{S}_1
	-\mathbf{Q}_1\mathbf{S}_1\right\|_F^2
	+\left\|\mathbf{Q}_1\mathbf{S}_1-\mathbf{A}_{(1)}(\mathbf{G}_{1,3}\otimes\mathbf{G}_{1,2})^\top\right\|_F^2.
	\end{split}
	\end{equation*}
	Since $\mathbf{Q}_1^\top\mathbf{Q}_1=\mathbf{I}_{\mu_1}$, then
	$$\left\|(\mathbf{Q}_1\mathbf{Q}_1^\top)\mathbf{Q}_1\mathbf{S}_1
	-\mathbf{Q}_1\mathbf{S}_1\right\|_F^2=0.$$
	Since $\|\mathbf{Q}_1\mathbf{Q}_1^\top\|_2 =1$, then
	\begin{equation*}
	\begin{split}
	\left\|(\mathbf{Q}_1\mathbf{Q}_1^\top)\mathbf{A}_{(1)}(\mathbf{G}_{1,3}\otimes\mathbf{G}_{1,2})^\top
	-\mathbf{Q}_1\mathbf{Q}_1^\top\mathbf{Q}_1\mathbf{S}_1\right\|_F^2\leq
	\left\|\mathbf{Q}_1\mathbf{S}_1-\mathbf{A}_{(1)}(\mathbf{G}_{1,3}\otimes\mathbf{G}_{1,2})^\top\right\|_F^2.
	\end{split}
	\end{equation*}
	Hence we have
	\begin{equation}\label{SUB:eqn10}
	\begin{split}
	&\left\|(\mathbf{Q}_1\mathbf{Q}_1^\top)\mathbf{A}_{(1)}(\mathbf{G}_{1,3}\otimes\mathbf{G}_{1,2})^\top\mathbf{F}
	-\mathbf{A}_{(1)}(\mathbf{G}_{1,3}\otimes\mathbf{G}_{1,2})^\top\mathbf{F}\right\|_F^2\\
	&\leq 2
	\|\mathbf{F}\|_2^2
	\left\|\mathbf{Q}_1\mathbf{S}_1-\mathcal{A}_{(1)}(\mathbf{G}_{1,3}\otimes\mathbf{G}_{1,2})^\top\right\|_F^2.
	\end{split}
	\end{equation}
	Combining (\ref{SUB:eqn8}), (\ref{SUB:eqn9}) and (\ref{SUB:eqn10}) yields (\ref{SUB:eqn11}).
\end{proof}

The upper bound of (\ref{SUB:eqn11}) is given in the following theorem.
\begin{theorem}
	\label{SUB:thm9}
	Suppose that $\mathcal{A}\in\mathbb{R}^{I_1\times I_2\times I_3}$. Let $\mathbf{G}_{1,m}$ be a real $L_{1,m}\times I_m$ matrix whose entries are i.i.d. Gaussian random variables with zero mean and unit variance for $m=2,3$. Let $\mu_1$, $L_{1,2}$ and $L_{1,3}$ be integers such that $(1+1/\ln(\sqrt{\mu_1}))\sqrt{\mu_1}<L_{1,2},L_{1,3}$ and $L_{1,2}L_{1,3}<\min(I_1,I_2I_3)$. Let $\sqrt{\mu_1}$ be a positive integer.
	We define $a_{1}$, $a_{1}'$, $c_{12}$, $c_{12}'$, $c_{13}$ and $c_{13}'$ as in Theorems {\rm\ref{SUB:thm4}} and {\rm\ref{SUB:thm5}}. Then there exists a matrix $\mathbf{F}\in\mathbb{R}^{L_{1,2}L_{1,3}\times I_2I_3}$ such that
	\begin{equation*}
	\left\|\mathbf{A}_{(1)}(\mathbf{G}_{1,3}\otimes\mathbf{G}_{1,2})^\top\mathbf{F}
	-\mathbf{A}_{(1)}\right\|_F
	\leq C_1'\Delta_{\mu_1+1}(\mathbf{A}_{(1)}),
	\end{equation*}
	and
	\begin{equation*}
	\|\mathbf{F}\|_2\leq\frac{1}{c_{1}\sqrt{L_{1,2}L_{1,3}}},\ C_1'=\sqrt{\frac{a_{1}^2I_2I_3}{c_{12}^2c_{13}^2L_{1,2}L_{1,3}}+1}
	\end{equation*}
	with probability at least $1-e^{-c_{12}'L_{1,2}}-e^{-c_{13}'L_{1,3}}-e^{-a_{1}'I_2I_3}$.
\end{theorem}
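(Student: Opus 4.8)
The plan is to exhibit an explicit $\mathbf{F}$ through an interpolative/projection construction built from the leading right--singular subspace of $\mathbf{A}_{(1)}$, and then to control the two random ingredients (the spectral norm of the test matrix and the least singular value of its projection) using Theorems \ref{SUB:thm4} and \ref{SUB:thm5}. Throughout I would write $\mathbf{R}=(\mathbf{G}_{1,3}\otimes\mathbf{G}_{1,2})^\top\in\mathbb{R}^{I_2I_3\times L_{1,2}L_{1,3}}$, so the object to be approximated is $\mathbf{A}_{(1)}\mathbf{R}\mathbf{F}$. First I would form the SVD $\mathbf{A}_{(1)}=\mathbf{U}\bm{\Sigma}\mathbf{V}^\top$ and split it at index $\mu_1$, writing $\mathbf{U}=[\mathbf{U}_1,\mathbf{U}_2]$, $\mathbf{V}=[\mathbf{V}_1,\mathbf{V}_2]$, $\bm{\Sigma}=\mathrm{diag}(\bm{\Sigma}_1,\bm{\Sigma}_2)$ and $\mathbf{A}_j=\mathbf{U}_j\bm{\Sigma}_j\mathbf{V}_j^\top$, so that $\|\mathbf{A}_2\|_F=\Delta_{\mu_1+1}(\mathbf{A}_{(1)})$, and then set $\mathbf{F}:=(\mathbf{V}_1^\top\mathbf{R})^\dagger\mathbf{V}_1^\top$.

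The next step is the purely algebraic skeleton, independent of randomness. Once $\mathbf{V}_1^\top\mathbf{R}$ is shown to have full row rank $\mu_1$ we have $(\mathbf{V}_1^\top\mathbf{R})(\mathbf{V}_1^\top\mathbf{R})^\dagger=\mathbf{I}_{\mu_1}$, whence a short computation gives $\mathbf{A}_1\mathbf{R}\mathbf{F}=\mathbf{A}_1$ and therefore $\mathbf{A}_{(1)}\mathbf{R}\mathbf{F}-\mathbf{A}_{(1)}=\mathbf{A}_2\mathbf{R}\mathbf{F}-\mathbf{A}_2$. The rows of $\mathbf{A}_2\mathbf{R}\mathbf{F}$ lie in $\mathrm{span}(\mathbf{V}_1)$ while those of $\mathbf{A}_2$ lie in $\mathrm{span}(\mathbf{V}_2)$; since $\mathbf{V}_1^\top\mathbf{V}_2=\mathbf{0}$, the Pythagorean identity gives $\|\mathbf{A}_{(1)}\mathbf{R}\mathbf{F}-\mathbf{A}_{(1)}\|_F^2=\|\mathbf{A}_2\mathbf{R}\mathbf{F}\|_F^2+\Delta_{\mu_1+1}(\mathbf{A}_{(1)})^2$, which is exactly the source of the ``$+1$'' inside $C_1'$. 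The cross term is then bounded by $\|\mathbf{A}_2\mathbf{R}\mathbf{F}\|_F\le\|\mathbf{A}_2\|_F\,\|\mathbf{R}\|_2\,\|(\mathbf{V}_1^\top\mathbf{R})^\dagger\|_2=\Delta_{\mu_1+1}(\mathbf{A}_{(1)})\,\|\mathbf{R}\|_2/\sigma_{\min}(\mathbf{V}_1^\top\mathbf{R})$, and the same least singular value simultaneously controls $\|\mathbf{F}\|_2\le 1/\sigma_{\min}(\mathbf{V}_1^\top\mathbf{R})$.

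For the numerator I would apply Theorem \ref{SUB:thm4} directly to the Kronecker product $\mathbf{G}_{1,3}\otimes\mathbf{G}_{1,2}$, which has $L_{1,2}L_{1,3}$ rows and $I_2I_3$ columns, obtaining $\|\mathbf{R}\|_2\le a_1\sqrt{I_2I_3}$ except on an event of probability at most $e^{-a_1'I_2I_3}$; this produces the factor $a_1^2I_2I_3$ and the single probability term indexed by $I_2I_3$. The delicate part is the denominator, and here I would choose the orthonormal basis of the leading right--singular subspace in Kronecker form $\mathbf{V}_1=\mathbf{V}_{1,3}\otimes\mathbf{V}_{1,2}$ with $\mathbf{V}_{1,m}\in\mathbb{R}^{I_m\times\sqrt{\mu_1}}$ orthonormal (this is where the hypothesis that $\sqrt{\mu_1}$ is an integer enters), so that $\mathbf{V}_1^\top\mathbf{R}=(\mathbf{V}_{1,3}^\top\mathbf{G}_{1,3}^\top)\otimes(\mathbf{V}_{1,2}^\top\mathbf{G}_{1,2}^\top)$ factorizes. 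By rotational invariance of the Gaussian, each factor $\mathbf{V}_{1,m}^\top\mathbf{G}_{1,m}^\top$ is an i.i.d.\ standard Gaussian matrix of size $\sqrt{\mu_1}\times L_{1,m}$, and since $L_{1,m}>(1+1/\ln(\sqrt{\mu_1}))\sqrt{\mu_1}$, Theorem \ref{SUB:thm5} applies and gives $\sigma_{\min}(\mathbf{V}_{1,m}^\top\mathbf{G}_{1,m}^\top)\ge c_{1m}\sqrt{L_{1,m}}$ except on an event of probability at most $e^{-c_{1m}'L_{1,m}}$. Multiplying singular values across the Kronecker product yields $\sigma_{\min}(\mathbf{V}_1^\top\mathbf{R})\ge c_{12}c_{13}\sqrt{L_{1,2}L_{1,3}}$, which gives both $\|\mathbf{F}\|_2\le 1/(c_1\sqrt{L_{1,2}L_{1,3}})$ with $c_1=c_{12}c_{13}$ and, after substitution into the cross--term bound, $\|\mathbf{A}_{(1)}\mathbf{R}\mathbf{F}-\mathbf{A}_{(1)}\|_F\le\sqrt{a_1^2I_2I_3/(c_{12}^2c_{13}^2L_{1,2}L_{1,3})+1}\;\Delta_{\mu_1+1}(\mathbf{A}_{(1)})$. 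A union bound over the three failure events completes the estimate.

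The main obstacle is precisely this last reduction. For a generic tensor the leading right--singular subspace of $\mathbf{A}_{(1)}$ need not admit a Kronecker-factored orthonormal basis, and for an arbitrary orthonormal $\mathbf{V}_1$ the projected matrix $\mathbf{V}_1^\top\mathbf{R}$ is assembled from products of independent Gaussians (rank-one random tensors projected onto $\mathrm{span}(\mathbf{V}_1)$) and is neither Gaussian nor sub-Gaussian, so Theorem \ref{SUB:thm5} cannot be invoked verbatim. The crux is therefore to justify working with a Kronecker-structured subspace, or equivalently to prove a lower bound on the least singular value of $\mathbf{V}_1^\top(\mathbf{G}_{1,3}\otimes\mathbf{G}_{1,2})^\top$ that survives the loss of rotational invariance. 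I expect this step, which is exactly where the integrality of $\sqrt{\mu_1}$ and the two separate conditions on $L_{1,2}$ and $L_{1,3}$ are used, to carry essentially all the difficulty; the remaining pieces (the Pythagorean split, the spectral-norm bound on $\mathbf{R}$, and the union bound) are routine.
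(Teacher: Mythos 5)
Your construction of $\mathbf{F}$ is exactly the paper's: its proof sets $\mathbf{F}=\mathbf{P}\mathbf{V}^\top$ with $\mathbf{P}=\bigl(\mathbf{H}^\dag\;\;\mathbf{0}\bigr)$ and $\mathbf{H}=\mathbf{V}(:,1:\mu_1)^\top(\mathbf{G}_{1,3}\otimes\mathbf{G}_{1,2})^\top$, which is precisely your $(\mathbf{V}_1^\top\mathbf{R})^\dag\mathbf{V}_1^\top$, and it reduces the error in the same way to the two random quantities $\|(\mathbf{G}_{1,3}\otimes\mathbf{G}_{1,2})^\top\|_2$ (bounded via Theorem \ref{SUB:thm4}, as you do) and $\sigma_{\min}(\mathbf{H})$. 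Your Pythagorean splitting is in fact a small improvement on the paper's triangle inequality: your route yields the stated constant $C_1'=\sqrt{x+1}$, whereas the paper's own chain of inequalities only gives $\sqrt{x}+1$.

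The step you single out as the crux is, however, a genuine gap in your proposal --- and you could not have repaired it by consulting the paper, because the paper's treatment of that same step is flawed. For an arbitrary orthonormal $\mathbf{V}_1$, the matrix $\mathbf{V}_1^\top(\mathbf{G}_{1,3}\otimes\mathbf{G}_{1,2})^\top$ has dependent entries that are quadratic in the Gaussian inputs, so it does not satisfy Definition \ref{SUB:def5} and Theorem \ref{SUB:thm5} cannot be invoked; your Kronecker-factored reduction covers only the special case $\mathbf{V}_1=\mathbf{V}_{1,3}\otimes\mathbf{V}_{1,2}$, and a generic tensor admits no such basis. The paper bridges this point with Lemma \ref{SUB:lemma1add}, asserting $\sigma_{\min}(\mathbf{H})\geq\sigma_{\min}\bigl((\mathbf{G}_{1,3}\otimes\mathbf{G}_{1,2})^\top\bigr)=\sigma_{\min}(\mathbf{G}_{1,2})\,\sigma_{\min}(\mathbf{G}_{1,3})$, but that lemma is false in the regime $K<J<I$ in which it is applied: choosing a column of $\mathbf{Q}$ orthogonal to the range of $\mathbf{A}$ makes $\mathbf{Q}^\top\mathbf{A}$ rank deficient, so $\sigma_{\min}(\mathbf{Q}^\top\mathbf{A})=0$ while $\sigma_{\min}(\mathbf{A})>0$; the lemma's proof goes wrong by identifying $\sigma_{\min}$ with the unconstrained minimum of $\mathbf{u}^\top\mathbf{M}\mathbf{v}/(\|\mathbf{u}\|_2\|\mathbf{v}\|_2)$ over both vectors, a quantity that is in general negative. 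Moreover, even granting that lemma, applying Theorem \ref{SUB:thm5} to $\mathbf{G}_{1,m}\in\mathbb{R}^{L_{1,m}\times I_m}$ would produce a bound of order $\sqrt{I_m}$ with failure probability $e^{-cI_m}$ under a hypothesis of the form $I_m>(1+1/\ln(L_{1,m}))L_{1,m}$, none of which matches the statement; the factors $\sqrt{L_{1,2}L_{1,3}}$, the failure probabilities $e^{-c_{1m}'L_{1,m}}$, and the hypotheses that $\sqrt{\mu_1}$ is an integer with $(1+1/\ln(\sqrt{\mu_1}))\sqrt{\mu_1}<L_{1,m}$ are exactly what your reduction to $\sqrt{\mu_1}\times L_{1,m}$ Gaussian factors produces. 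So you have reconstructed the argument from which the theorem's constants were actually computed, and your assessment is accurate: a lower bound on $\sigma_{\min}\bigl(\mathbf{V}_1^\top(\mathbf{G}_{1,3}\otimes\mathbf{G}_{1,2})^\top\bigr)$ valid for an arbitrary orthonormal $\mathbf{V}_1$ is the missing ingredient, in your proposal and in the paper alike. (A more minor issue shared by both: the entries of $\mathbf{G}_{1,3}\otimes\mathbf{G}_{1,2}$ are products of Gaussians, hence sub-exponential rather than sub-Gaussian, so Theorem \ref{SUB:thm4} should be applied to each factor separately through $\|\mathbf{B}\otimes\mathbf{C}\|_2=\|\mathbf{B}\|_2\|\mathbf{C}\|_2$ rather than to the Kronecker product directly.)
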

\begin{proof}
	We begin by applying  SVD of to $\mathbf{A}_{(1)}$ such that $\mathbf{A}_{(1)}=\mathbf{U}\bm{\Sigma}\mathbf{V}^{\top}$, where $\mathbf{U}\in\mathbb{R}^{I_1\times I_1}$ is orthonormal, $\bm{\Sigma}\in\mathbb{R}^{I_1\times I_1}$ is diagonal with nonnegative entries and $\mathbf{V}\in\mathbb{R}^{I_2I_3\times I_1}$ is orthogonal.
	
	Assume that the product of $\mathbf{V}^\top$ and $\mathbf{G}_{1,3}\otimes\mathbf{G}_{1,2}$ is
	\begin{equation*}
	\mathbf{V}^\top(\mathbf{G}_{1,3}\otimes\mathbf{G}_{1,2})=\begin{pmatrix}
	\mathbf{H}\\
	\mathbf{R}
	\end{pmatrix},
	\end{equation*}
	where $\mathbf{H}$ is a $\mu_1\times L_{1,2}L_{1,3}$ matrix and $\mathbf{R}$ is an $(I_1-\mu_1)\times L_{1,2}L_{1,3}$ matrix. Since $\mathbf{G}_{1,3}\otimes\mathbf{G}_{1,2}$ is a sub-Gaussian matrix, and $\mathbf{V}$ is an orthogonal matrix, then $\mathbf{V}^\top(\mathbf{G}_{1,3}\otimes\mathbf{G}_{1,2})$ is also a sub-Gaussian matrix. Therefore, $\mathbf{H}$ and $\mathbf{R}$ are also sub-Gaussian matrices. Define  $\mathbf{F}=\mathbf{P}\mathbf{V}^\top$, where $\mathbf{P}$ is a matrix of size $L_{1,2}L_{1,3}\times I_1$ such that
	\begin{equation*}
	\mathbf{P}=\begin{pmatrix}
	\mathbf{H}^\dag&\mathbf{0}_{L_{1,2}L_{1,3}\times (I_1-\mu_1)}
	\end{pmatrix}.
	\end{equation*}
	Note that $\mathbf{H}=(\mathbf{V}(:,1:\mu_1))^\top(\mathbf{G}_{1,3}\otimes\mathbf{G}_{1,2})$. According to Lemma \ref{SUB:lemma1add} and Theorem \ref{SUB:thm5}, we get
	\begin{equation*}
	\begin{split}
	\|\mathbf{F}\|_2&=\|\mathbf{P}\mathbf{V}^\top\|_2
	=\|\mathbf{H}^\dag\|_2=
	\leq\frac{1}{\sigma_{\min}(\mathbf{H})}=\frac{1}{\sigma_{\min}(\mathbf{G}_{1,2})}\frac{1}{\sigma_{\min}(\mathbf{G}_{1,3})}
	\leq\frac{1}{c_{12}c_{13}\sqrt{L_{1,2}L_{1,3}}}
	\end{split}
	\end{equation*}
	with probability not less than $1-e^{-c_{12}'L_{1,2}}-e^{-c_{13}'L_{1,3}}$.
	
	Now, we can bound $\|\mathbf{A}_{(1)}(\mathbf{G}_{1,3}\otimes\mathbf{G}_{1,2})^\top\mathbf{F}
	-\mathbf{A}_{(1)}\|_F$.
	By using $\mathbf{A}_{(1)}=\mathbf{U}\bm{\Sigma}\mathbf{V}^{\top}$, we obtain
	\begin{equation*}
	\begin{split}
	\mathbf{A}_{(1)}(\mathbf{G}_{1,3}\otimes\mathbf{G}_{1,2})^\top\mathbf{F}
	-\mathbf{A}_{(1)}=\mathbf{U}\bm{\Sigma}\left(
	\begin{pmatrix}
	\mathbf{H}\\
	\mathbf{R}\end{pmatrix}\begin{pmatrix}
	\mathbf{H}^\dag&\mathbf{0}_{L_{1,2}L_{1,3}\times (I_1-\mu_1)}
	\end{pmatrix}-\mathbf{I}_{I_1}\right)\mathbf{V}^{\top}.
	\end{split}
	\end{equation*}
	We define $\bm{\Sigma}_2$ to be the $(I_1-\mu_1)\times (I_1-\mu_1)$ lower-right block of $\bm{\Sigma}$. Then
	\begin{equation*}
	\begin{split}
	\bm{\Sigma}\left(
	\begin{pmatrix}
	\mathbf{H}\\
	\mathbf{R}\end{pmatrix}\begin{pmatrix}
	\mathbf{H}^\dag&\mathbf{0}_{L_{1,2}L_{1,3}\times (I_1-\mu_1)}
	\end{pmatrix}-\mathbf{I}_{I_1}\right)=\bm{\Sigma}
	\begin{pmatrix}
	\mathbf{0}_{\mu_1\times \mu_1}&\mathbf{0}_{\mu_1\times (I_1-\mu_1)}\\
	\mathbf{R}\mathbf{H}^{\dag}&-\mathbf{I}_{I_1-\mu_1}
	\end{pmatrix}=\begin{pmatrix}
	\mathbf{0}_{\mu_1\times \mu_1}&\mathbf{0}_{\mu_1\times (I_1-\mu_1)}\\
	\bm{\Sigma}_2\mathbf{R}\mathbf{H}^{\dag}&-\bm{\Sigma}_2
	\end{pmatrix}.
	\end{split}
	\end{equation*}
	The Frobenius norm of the last term is
	\begin{equation*}
	\left\|\begin{pmatrix}
	\mathbf{0}_{\mu_1\times \mu_1}&\mathbf{0}_{\mu_1\times (I_1-\mu_1)}\\
	\bm{\Sigma}_2\mathbf{R}\mathbf{H}^{\dag}&-\bm{\Sigma}_2
	\end{pmatrix}\right\|_F
	\leq\left\|\bm{\Sigma}_2\mathbf{R}\mathbf{H}^{\dag}\right\|_F
	+\|\bm{\Sigma}_2\|_F.
	\end{equation*}
	Moreover, we have
	\begin{equation*}
	\begin{split}
	\|\bm{\Sigma}_2\mathbf{R}\mathbf{H}^{\dag}\|_F\leq
	\|\mathbf{H}^{\dag}\|_2\|\mathbf{R}\|_2\|\bm{\Sigma}_2\|_F\leq
	\|\mathbf{H}^{\dag}\|_2\|\mathbf{G}_{1,2}\otimes\mathbf{G}_{1,3}\|_2\|\bm{\Sigma}_2\|_F.
	\end{split}
	\end{equation*}
	By Theorem \ref{SUB:thm4}, we know
	$$\|\mathbf{R}\|_2\leq\|\mathbf{G}_{1,2}\otimes\mathbf{G}_{1,3}\|_2\leq a_{1}\sqrt{I_2I_3}$$
	with probability not less than $1-e^{-a_{1}'I_2I_3}$. Hence, this theorem is completely proved.
\end{proof}
\subsection{Proving Theorem \ref{SUB:thm6}}

In this section, we assume that $\mathbf{Q}_1$ in Lemma \ref{SUB:lem11} is derived from Algorithm \ref{SUB:thm5}. The main goal is to estimate the upper bound of $\|\mathcal{A}-\mathcal{A}\times_1(\mathbf{Q}_1\mathbf{Q}_1^\top)\|_F$. As shown in Lemma \ref{SUB:lem11} and Theorem \ref{SUB:thm9}, we only need  to derive an upper bound for the second part in the right-hand side of (\ref{SUB:eqn11}).

For a given $\mathbf{A}\in\mathbb{R}^{I\times J}$, suppose that the entries of $\mathbf{G}\in\mathbb{R}^{J\times L}$ are i.i.d. sub-Gaussian random variables of zero mean and unit variance, the following theorem provides a highly probable upper bound on the singular values of the product $\mathbf{A}\mathbf{G}$ in term of the singular values of $\mathbf{A}$.

\begin{theorem}
	\label{SUB:thm10}
	Let $\mathbf{A}$ be a real $I\times J$ matrix with $I\leq J$. Let $K$ and $L$ be integers such that $K<L<I$. Suppose that $\mu\geq1$, and the entries of $\mathbf{G}\in\mathbb{R}^{J\times K}$ are i.i.d. sub-Gaussian random variables with zero mean and unit variance. We define $a_1$ and $a_2$ as in Theorems {\rm\ref{SUB:thm4}} and {\rm\ref{SUB:thm5}}. Then
	\begin{equation*}
	\Delta_{K+1}(\mathbf{A}\mathbf{G})\leq a_1\sqrt{J}\Delta_{K+1}(\mathbf{A})
	\end{equation*}
	with probability at least $1-e^{-a_2J}$, where $a_1=6\mu\sqrt{a_2+4}$.
\end{theorem}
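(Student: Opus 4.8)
The plan is to split the claim into a deterministic comparison between the tail singular values of $\mathbf{A}\mathbf{G}$ and those of $\mathbf{A}$, followed by a high-probability bound on $\|\mathbf{G}\|_2$. First I read the hypotheses as $\mathbf{G}\in\mathbb{R}^{J\times L}$: the condition $K<L<I$ together with the appearance of $\Delta_{K+1}(\mathbf{A}\mathbf{G})$ only makes sense when $\mathbf{A}\mathbf{G}$ has more than $K$ singular values, i.e. when $\mathbf{G}$ has $L>K$ columns (the ``$K$'' in the matrix dimension appears to be a typo for $L$). With this reading $\mathbf{A}\mathbf{G}$ is an $I\times L$ matrix, and since $L<I$ it has exactly $L$ candidate singular values, so that $\Delta_{K+1}(\mathbf{A}\mathbf{G})^2=\sum_{i=K+1}^{L}\sigma_i(\mathbf{A}\mathbf{G})^2$.

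The first step is to invoke Corollary \ref{SUB:lem6} with truncation index $k=K+1$, where the role of its column parameter is played here by $L$ (the number of columns of $\mathbf{G}$). This gives
\begin{equation*}
\sum_{i=K+1}^{L}\sigma_i(\mathbf{A}\mathbf{G})^2\leq\|\mathbf{G}\|_2^2\sum_{j=K+1}^{\min(I,J)}\sigma_j(\mathbf{A})^2.
\end{equation*}
Because $I\leq J$ we have $\min(I,J)=I$, so the right-hand sum equals $\Delta_{K+1}(\mathbf{A})^2$ exactly. Taking square roots yields the purely deterministic estimate $\Delta_{K+1}(\mathbf{A}\mathbf{G})\leq\|\mathbf{G}\|_2\,\Delta_{K+1}(\mathbf{A})$, which reduces the entire theorem to controlling the spectral norm of the random matrix $\mathbf{G}$.

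The second step is to bound $\|\mathbf{G}\|_2$ through Theorem \ref{SUB:thm4}. Since $\mathbf{G}$ is tall ($J\geq L$), I would apply Theorem \ref{SUB:thm4} to its transpose $\mathbf{G}^\top\in\mathbb{R}^{L\times J}$, which meets the required shape hypothesis $L\leq J$ and inherits the centered, unit-variance, sub-Gaussian property entrywise. Theorem \ref{SUB:thm4} then gives $\|\mathbf{G}\|_2=\|\mathbf{G}^\top\|_2\leq a_1\sqrt{J}$ with $a_1=6\mu\sqrt{a_2+4}$, the failure event having probability at most $e^{-a_2J}$. On the complementary event, combining this with the deterministic bound from the first step produces $\Delta_{K+1}(\mathbf{A}\mathbf{G})\leq a_1\sqrt{J}\,\Delta_{K+1}(\mathbf{A})$ with probability at least $1-e^{-a_2J}$, which is exactly the assertion.

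I expect no genuine analytic obstacle here: the proof is essentially a clean concatenation of Corollary \ref{SUB:lem6} and Theorem \ref{SUB:thm4}. The only points demanding care are bookkeeping ones. The first is reconciling the index ranges so that the truncated sum for $\mathbf{A}\mathbf{G}$ coincides with $\Delta_{K+1}(\mathbf{A}\mathbf{G})$, which relies on $L<I\leq J$ to pin down how many singular values each matrix carries. The second is orienting $\mathbf{G}$ correctly, namely transposing so that the shape hypothesis of Theorem \ref{SUB:thm4} (rows no more than columns) is satisfied while using the invariance $\|\mathbf{G}\|_2=\|\mathbf{G}^\top\|_2$. One should also verify that Theorem \ref{SUB:thm4} applies to a generic sub-Gaussian $\mathbf{G}$ with the stated constants $a_1,a_2$, which the excerpt already grants.
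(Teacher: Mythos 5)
Your proof is correct and follows essentially the same route as the paper's: Corollary \ref{SUB:lem6} gives the deterministic estimate $\Delta_{K+1}(\mathbf{A}\mathbf{G})\leq\|\mathbf{G}\|_2\,\Delta_{K+1}(\mathbf{A})$, and Theorem \ref{SUB:thm4} bounds $\|\mathbf{G}\|_2$ by $a_1\sqrt{J}$ with probability at least $1-e^{-a_2J}$. You are in fact more careful than the paper on two bookkeeping points it glosses over: the dimension of $\mathbf{G}$ (the stated $J\times K$ is indeed best read as $J\times L$, since otherwise $\Delta_{K+1}(\mathbf{A}\mathbf{G})$ vanishes trivially) and the need to apply Theorem \ref{SUB:thm4} to $\mathbf{G}^\top$ so that its shape hypothesis is met and the bound comes out as $a_1\sqrt{J}$ rather than $a_1\sqrt{L}$.
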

\begin{proof}
	By Corollary \ref{SUB:lem6}, we have
	\begin{equation*}
	\sum_{i=K+1}^L\sigma_{i}(\mathbf{A}\mathbf{G})^2
	\leq\|\mathbf{G}\|_2^2\sum_{j=K+1}^I\sigma_{j}(\mathbf{A})^2,
	\end{equation*}
	that is,
	$$\Delta_{K+1}(\mathbf{A}\mathbf{G})\leq \|\mathbf{G}\|_2\Delta_{K+1}(\mathbf{A}).$$
	Since the entries of $\mathbf{G}\in\mathbb{R}^{J\times K}$ are i.i.d. sub-Gaussian random variables with zero mean and unit variance, then, according to Theorem \ref{SUB:thm4}, we have $\|\mathbf{G}\|_2\leq a_1\sqrt{J}$ with probability at least $1-e^{-a_2J}$. Hence, the proof is completed.
\end{proof}

\begin{theorem}
	\label{SUB:thm11}
	Suppose that $\mathcal{A}\in\mathbb{R}^{I_1\times I_2\times I_3}$. Let $\mathbf{G}_{1,m}$ be a real $L_{1,m}\times I_m$ matrix whose entries are i.i.d. Gaussian random variables with zero mean and unit variance for $m=2,3$. Let $\mu_1$, $L_2$ and $L_3$ be integers such that $(1+1/\ln(\sqrt{\mu_1}))\sqrt{\mu_1}<L_{1,2},L_{1,3}$ and $L_{1,2}L_{1,3}<\min(I_1,I_2I_3)$.
	We define $a_{1}$, and $a_{1}'$ as in Theorems {\rm\ref{SUB:thm4}} and {\rm\ref{SUB:thm5}}. Then
	\begin{equation*}
	\Delta_{\mu_1+1}(\mathbf{A}_{(1)}(\mathbf{G}_{1,3}\otimes\mathbf{G}_{1,2})^\top)
	\leq a_{1}\sqrt{I_2I_3}\Delta_{\mu_1+1}(\mathbf{A}_{(1)})
	\end{equation*}
	with probability at least $1-e^{-a_{1}'I_2I_3}$, where $a_1=6\alpha\sqrt{a_1'+4}$ for $\alpha\geq0$.
\end{theorem}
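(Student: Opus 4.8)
The plan is to mirror the proof of Theorem~\ref{SUB:thm10}, specializing the generic right factor there to the transposed Kronecker product $(\mathbf{G}_{1,3}\otimes\mathbf{G}_{1,2})^\top$. Note that $\mathbf{A}_{(1)}\in\mathbb{R}^{I_1\times I_2I_3}$ and $(\mathbf{G}_{1,3}\otimes\mathbf{G}_{1,2})^\top\in\mathbb{R}^{I_2I_3\times L_{1,2}L_{1,3}}$, so the product $\mathbf{A}_{(1)}(\mathbf{G}_{1,3}\otimes\mathbf{G}_{1,2})^\top$ is an $I_1\times L_{1,2}L_{1,3}$ matrix. The assumption $L_{1,2}L_{1,3}<\min(I_1,I_2I_3)$ ensures the inner dimension $L_{1,2}L_{1,3}$ does not exceed $\min(I_1,I_2I_3)$, and combined with $(1+1/\ln(\sqrt{\mu_1}))\sqrt{\mu_1}<L_{1,2},L_{1,3}$ it also gives $\mu_1+1\leq L_{1,2}L_{1,3}$; these are exactly the hypotheses needed to invoke Corollary~\ref{SUB:lem6}.

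First I would apply Corollary~\ref{SUB:lem6} with $\mathbf{A}:=\mathbf{A}_{(1)}$, $\mathbf{G}:=(\mathbf{G}_{1,3}\otimes\mathbf{G}_{1,2})^\top$, and $k:=\mu_1+1$. Summing the squared singular values from $\mu_1+1$ to $L_{1,2}L_{1,3}$ on the left and from $\mu_1+1$ to $\min(I_1,I_2I_3)$ on the right, then taking square roots, yields the deterministic inequality
\begin{equation*}
\Delta_{\mu_1+1}\bigl(\mathbf{A}_{(1)}(\mathbf{G}_{1,3}\otimes\mathbf{G}_{1,2})^\top\bigr)
\leq\bigl\|(\mathbf{G}_{1,3}\otimes\mathbf{G}_{1,2})^\top\bigr\|_2\,\Delta_{\mu_1+1}(\mathbf{A}_{(1)}),
\end{equation*}
where the right-hand factor uses the convention that $\mathbf{A}_{(1)}$ has $\min(I_1,I_2I_3)$ singular values. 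This step is purely algebraic and holds for any realization of the Gaussian matrices.

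Next I would convert the spectral-norm factor into the stated scaling. Since transposition preserves the spectral norm, $\|(\mathbf{G}_{1,3}\otimes\mathbf{G}_{1,2})^\top\|_2=\|\mathbf{G}_{1,3}\otimes\mathbf{G}_{1,2}\|_2$. Treating $\mathbf{G}_{1,3}\otimes\mathbf{G}_{1,2}$ as the sub-Gaussian matrix of size $L_{1,3}L_{1,2}\times I_3I_2$, exactly as was done for the block $\mathbf{R}$ in the proof of Theorem~\ref{SUB:thm9}, and using $L_{1,2}L_{1,3}\leq I_2I_3$ to satisfy the row-at-most-column requirement of Theorem~\ref{SUB:thm4}, I would apply Theorem~\ref{SUB:thm4} with $J:=I_2I_3$ to obtain
\begin{equation*}
\|\mathbf{G}_{1,3}\otimes\mathbf{G}_{1,2}\|_2\leq a_1\sqrt{I_2I_3}
\end{equation*}
with probability at least $1-e^{-a_1'I_2I_3}$, where $a_1=6\alpha\sqrt{a_1'+4}$. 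Substituting this bound into the deterministic inequality on the same event produces the claimed estimate, completing the argument.

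The main obstacle is precisely the justification that $\mathbf{G}_{1,3}\otimes\mathbf{G}_{1,2}$ may be treated as a sub-Gaussian matrix so that Theorem~\ref{SUB:thm4} applies with the single factor $a_1\sqrt{I_2I_3}$, rather than the weaker product bound $\|\mathbf{G}_{1,3}\|_2\|\mathbf{G}_{1,2}\|_2$ that the multiplicativity of the spectral norm under the Kronecker product would yield (this would cost an extra factor of $a_1$ and a different failure exponent). The entries of the Kronecker product are pairwise products of independent standard Gaussians, so in the strict probabilistic sense they are only sub-exponential rather than sub-Gaussian; the sharp $\sqrt{I_2I_3}$ scaling and the exponent $a_1'I_2I_3$ in the failure probability therefore rely on the sub-Gaussian treatment of the Kronecker product already adopted in the earlier proofs of the excerpt, which I take as given here.
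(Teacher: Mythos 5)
Your proposal is correct and follows essentially the same route as the paper: the paper's proof simply cites Theorem~\ref{SUB:thm10} (whose own proof is exactly Corollary~\ref{SUB:lem6} plus Theorem~\ref{SUB:thm4}) together with Theorem~\ref{SUB:thm4} applied to $(\mathbf{G}_{1,3}\otimes\mathbf{G}_{1,2})^\top$ with $J=I_2I_3$, which is precisely the argument you spelled out. The caveat you flag---that the Kronecker product of Gaussian matrices has sub-exponential rather than sub-Gaussian entries, so invoking Theorem~\ref{SUB:thm4} on it is not strictly justified---is a gap the paper itself shares (it asserts sub-Gaussianity of the Kronecker product in the proof of Theorem~\ref{SUB:thm9}), so treating it as given matches the paper's own level of rigor.
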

\begin{proof}
	Combining Theorems \ref{SUB:thm4} and \ref{SUB:thm10}, we can prove this theorem.
\end{proof}

Combining Theorems \ref{SUB:thm9} and \ref{SUB:thm11}, we obtain the following theorem.
\begin{theorem}
	\label{SUB:thm13}
	Suppose that $I_1\leq I_2I_3$. Let $\mu_1$, $L_{1,2}$ and $L_{1,3}$ be integers such that $L_{1,2}L_{1,3}<\min(I_1,I_2I_3)$ and $(1+1/\ln(\sqrt{\mu_1}))\sqrt{\mu_1}<L_{1,2},L_{1,3}$. Let  $\sqrt{\mu_1}$ be a positive integer. We define $a_{1}$, $a_{1}'$, $c_{12}$, $c_{12}'$, $c_{13}$ and $c_{13}'$ as in Theorems {\rm\ref{SUB:thm4}} and {\rm\ref{SUB:thm5}}. For a given tensor $\mathcal{A}\in\mathbb{R}^{I_1\times I_2\times I_3}$, if $\mathbf{Q}_1$ is derived from Algorithm {\rm\ref{SUB:alg3}} with $n=1$, then
	\begin{equation*}
	\begin{split}
	\left\|\mathcal{A}-\mathcal{A}\times_1(\mathbf{Q}_1\mathbf{Q}_1^\top)\right\|_F\leq
	2\left(\sqrt{\frac{a_{1}^2I_2I_3}{c_{1}^2L_{1,2}L_{1,3}}+1}
	+\sqrt{\frac{a_{1}^2I_2I_3}{c_{12}^2c_{13}^2L_{1,2}L_{1,3}}}\right)
	\Delta_{\mu_1+1}(\mathbf{A}_{(1)})
	\end{split}
	\end{equation*}
	with probability at least $1-e^{-c_{12}'L_{1,2}}-e^{-c_{13}'L_{1,3}}-e^{-a_{1}'I_2I_3}$.
\end{theorem}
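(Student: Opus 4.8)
The plan is to assemble the bound directly from the decomposition already established in Lemma~\ref{SUB:lem11} together with the two probabilistic estimates in Theorems~\ref{SUB:thm9} and~\ref{SUB:thm11}, so the argument is essentially careful bookkeeping rather than new analysis. Specializing Lemma~\ref{SUB:lem11} to the matrices $\mathbf{Q}_1$ and $\mathbf{S}_1$ produced by Algorithm~\ref{SUB:alg3} with $n=1$, and to the matrix $\mathbf{F}$ furnished by Theorem~\ref{SUB:thm9}, inequality~(\ref{SUB:eqn11}) controls $\|\mathcal{A}-\mathcal{A}\times_1(\mathbf{Q}_1\mathbf{Q}_1^\top)\|_F^2$ by $2\,(\text{term }1)^2+2\|\mathbf{F}\|_2^2\,(\text{term }2)^2$, where term~$1$ is $\|\mathbf{A}_{(1)}(\mathbf{G}_{1,3}\otimes\mathbf{G}_{1,2})^\top\mathbf{F}-\mathbf{A}_{(1)}\|_F$ and term~$2$ is $\|\mathcal{S}_1\times_1\mathbf{Q}_1-\mathcal{A}\times_2\mathbf{G}_{1,2}\times_3\mathbf{G}_{1,3}\|_F$. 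I would then bound the two terms separately.

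First I would handle term~$1$ and the factor $\|\mathbf{F}\|_2$. Theorem~\ref{SUB:thm9} supplies, on a single event of probability at least $1-e^{-c_{12}'L_{1,2}}-e^{-c_{13}'L_{1,3}}-e^{-a_1'I_2I_3}$, both the estimate $\text{term }1\le C_1'\Delta_{\mu_1+1}(\mathbf{A}_{(1)})$ with $C_1'=\sqrt{a_1^2I_2I_3/(c_{12}^2c_{13}^2L_{1,2}L_{1,3})+1}$ and the operator-norm bound $\|\mathbf{F}\|_2\le 1/(c_{12}c_{13}\sqrt{L_{1,2}L_{1,3}})$. Writing $t:=a_1^2I_2I_3/(c_{12}^2c_{13}^2L_{1,2}L_{1,3})$, this gives $(\text{term }1)^2\le(t+1)\Delta_{\mu_1+1}(\mathbf{A}_{(1)})^2$ and $\|\mathbf{F}\|_2^2\le t/(a_1^2I_2I_3)$.

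The key observation for term~$2$ is that $\mathcal{S}_1\times_1\mathbf{Q}_1$ and $\mathcal{A}\times_2\mathbf{G}_{1,2}\times_3\mathbf{G}_{1,3}$ have mode-$1$ unfoldings $\mathbf{Q}_1\mathbf{S}_1$ and $\mathbf{B}_{1,(1)}=\mathbf{A}_{(1)}(\mathbf{G}_{1,3}\otimes\mathbf{G}_{1,2})^\top$, so that, since the Frobenius norm is invariant under unfolding, term~$2$ equals $\|\mathbf{Q}_1\mathbf{S}_1-\mathbf{B}_{1,(1)}\|_F$. Because the pair $(\mathbf{Q}_1,\mathbf{S}_1)$ in Step~$4$ of Algorithm~\ref{SUB:alg3} may be taken from the truncated SVD of $\mathbf{B}_{1,(1)}$, Lemma~\ref{SUB:lem10} yields $\|\mathbf{Q}_1\mathbf{S}_1-\mathbf{B}_{1,(1)}\|_F\le\Delta_{\mu_1+1}(\mathbf{B}_{1,(1)})$, and Theorem~\ref{SUB:thm11} then bounds the right-hand side by $a_1\sqrt{I_2I_3}\,\Delta_{\mu_1+1}(\mathbf{A}_{(1)})$ on an event of probability at least $1-e^{-a_1'I_2I_3}$. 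Consequently $\|\mathbf{F}\|_2^2(\text{term }2)^2\le t\,\Delta_{\mu_1+1}(\mathbf{A}_{(1)})^2$.

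Finally I would combine the two estimates and take a square root. Substituting into (\ref{SUB:eqn11}) gives $\|\mathcal{A}-\mathcal{A}\times_1(\mathbf{Q}_1\mathbf{Q}_1^\top)\|_F^2\le 2(t+1)\Delta_{\mu_1+1}(\mathbf{A}_{(1)})^2+2t\,\Delta_{\mu_1+1}(\mathbf{A}_{(1)})^2$, and using $\sqrt{x+y}\le\sqrt{x}+\sqrt{y}$ together with $\sqrt{2}\le 2$ produces the stated prefactor $2(\sqrt{t+1}+\sqrt{t})$. The only genuinely delicate point is the probability accounting: the event $\|\mathbf{G}_{1,3}\otimes\mathbf{G}_{1,2}\|_2\le a_1\sqrt{I_2I_3}$ (carrying mass $e^{-a_1'I_2I_3}$) is shared by the proofs of Theorems~\ref{SUB:thm9} and~\ref{SUB:thm11}, so a careful union bound over the three failure events---$\{\|\mathbf{G}_{1,3}\otimes\mathbf{G}_{1,2}\|_2$ large$\}$, $\{\sigma_{\min}(\mathbf{G}_{1,2})$ small$\}$, and $\{\sigma_{\min}(\mathbf{G}_{1,3})$ small$\}$---avoids double counting and recovers exactly the claimed success probability $1-e^{-c_{12}'L_{1,2}}-e^{-c_{13}'L_{1,3}}-e^{-a_1'I_2I_3}$. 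I expect this bookkeeping, rather than any single inequality, to be the main place to be careful.
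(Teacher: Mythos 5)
Your proposal is correct and follows exactly the route the paper intends: the paper's "proof" is the single remark that Theorem \ref{SUB:thm13} follows by combining Theorems \ref{SUB:thm9} and \ref{SUB:thm11}, and your argument fills in precisely that combination via Lemma \ref{SUB:lem11}, the identification of the second term with $\|\mathbf{Q}_1\mathbf{S}_1-\mathbf{B}_{1,(1)}\|_F$ bounded through Lemma \ref{SUB:lem10}, and the union bound over the three failure events (correctly noting that the $\|\mathbf{G}_{1,3}\otimes\mathbf{G}_{1,2}\|_2$ event is shared, so the probability is not double-counted). Your reading of the constant $c_1^2$ in the statement as $c_{12}^2c_{13}^2$, and the final step $\sqrt{2(t+1)+2t}\leq 2(\sqrt{t+1}+\sqrt{t})$, are both consistent with the constants $C_1$ appearing in Theorem \ref{SUB:thm6}.
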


Now, we provide a proof for Theorem \ref{SUB:thm6} based on the above discussions.
\begin{proof}
	Theorem \ref{SUB:thm6} is derived from (\ref{SUB:eqn13}) and Theorem \ref{SUB:thm13}.
\end{proof}

\section{Numerical examples}

\label{SUB:sect6}

In this section, the codes are written using MATLAB and the MATLAB Tensor Toolbox \cite{tensortool} and the computations
are implemented on a laptop with Intel Core i5-4200M CPU (2.50GHz) and 8.00GB RAM.  Floating point numbers in each example have four decimal digits. In order to implement all algorithms in this paper, we set $K=10$. We use three functions `ttv', `ttm' and `ttt' in \cite{tensortool} to implement the tensor-vector product, the tensor-matrix product and the tensor-tensor product, respectively.

  We suppose that $I_1=I_2=I_3:=I$, $\mu_1=\mu_2=\mu_3=P$ and $L_{n,1}=L_{n,2}=L_{n,3}:=P+K$ with $n=1,2,3$. Under these assumptions, $\{p_1,p_2,p_3\}$ in Algorithm \ref{SUB:alg8} is set by $\{1,2,3\}$. For a given low multilinear rank approximation $\widehat{\mathcal{A}}=\mathcal{A}\times_{1}({\bf S}_{1}{\bf S}_{1}^\top)\times_{2}({\bf S}_{2}{\bf S}_{2}^\top)\times_{3}({\bf S}_{3}{\bf S}_{3}^\top)$ of $\mathcal{A}\in\mathbb{R}^{I\times I\times I}$, where the matrices ${\bf S}_n\in\mathbb{R}^{I\times \mu}$ are derived form the desired numerical algorithms. The relative least normalized error (RLNE) of the approximation is defined as
\begin{equation}\label{SUB:eqn21}
	{\rm RLNE}=\|\mathcal{A}-\widehat{\mathcal{A}}\|_F/\|\mathcal{A}\|_F.
\end{equation}

In this section, we compare Tucker-SVD with the existing deterministic and randomized algorithms for computing low multilinear rank approximations of a tensor via several examples. These algorithms are given by:
\begin{enumerate}
	\item[$\bullet$] tucker\_ALS: higher-order orthogonal iteration \cite{tensortool} (the maximum number of iterations is set to 50, the order to loop through dimensions is $\{1,2,3\}$, the entries of initial values are i.i.d. standard Gaussian variables and the tolerance on difference in fit is set to 0.0001);
	\item[$\bullet$] mlsvd: truncated multilinear singular value decomposition \cite{vvm_2012_sisc} (the order to loop through dimensions is $\{1,2,3\}$ and a faster but possibly less accurate eigenvalue decomposition is used to compute the factor matrices);
	\item[$\bullet$] lmlra\_aca: low multilinear rank approximation by adaptive cross-approximation \cite{tensor_cur_2010,tensorlab} (the relative singular value tolerance in determining the factor matrices is set to $1e-12$ and the factor matrices are orthonormal);
	\item[$\bullet$] Adap-Tucker: low multilinear rank approximation by the adaptive randomized algorithm \cite{che2018randomized};
	\item[$\bullet$] ran-Tucker: the randomized Tucker decomposition \cite{decomposition_big_tensor};
	\item[$\bullet$] mlsvd\_rsi: truncated multilinear SVD  \cite{vvm_2012_sisc} by  a randomized SVD algorithm based on randomized subspace iteration \cite{random_2011_siamreview} (the oversampling parameter is 10, the number of subspace iterations to be performed is 2 and we remove the parts of the factor matrices and core tensor corresponding due to the oversampling).
\end{enumerate}

\begin{figure}
	\centering
	\includegraphics[width=7in, height=2.5in]{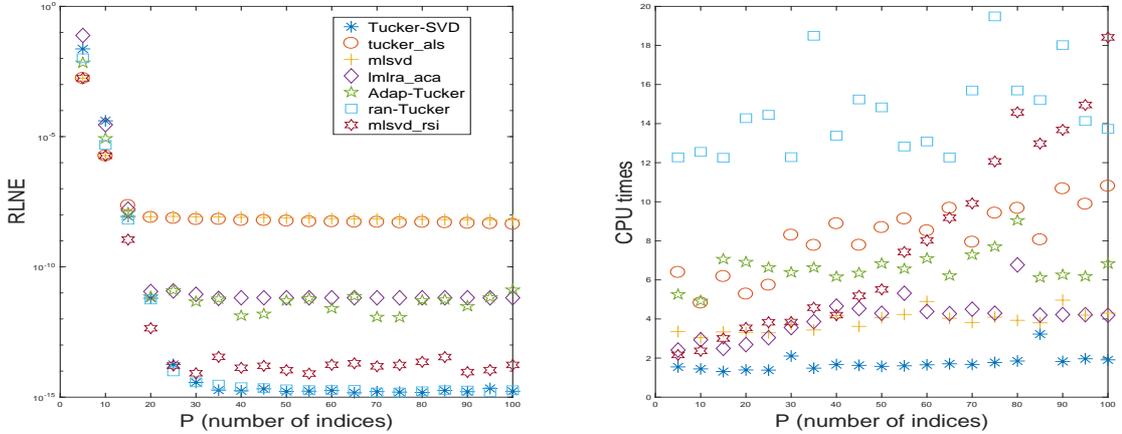}\\
	\caption{Numerical simulation results of applying Tucker-SVD, tucker\_als, mlsvd, lmlra\_aca, Adap-Tucker, ran-Tucker and mlsvd\_rsi to $\mathcal{A}$ with $P=5,10,\dots,100$ and $I=400$.}\label{SUB:fig1}
\end{figure}
\subsection{The test tensors generated by smooth functions}

Now we consider two tensors generated by sampling two families of smooth functions as follows,
\begin{equation*}
	a_{ijk}=\frac{1}{i+j+k},\quad b_{ijk}=\frac{1}{\ln(i+2j+3k)},
\end{equation*}
with $i,j,k=1,2,\dots,I$. The type of tensor $\mathcal{A}$ is chosen from \cite{tensor_cur_2010}.

Suppose that $I=400$. We compute a low multilinear rank approximation of $\mathcal{A}$ and $\mathcal{B}$ with multilinear rank $\{P,P,P\}$ using Tucker-SVD,  tucker\_als, mlsvd, lmlra\_aca, Adap-Tucker, ran-Tucker and mlsvd\_rsi. Figures \ref{SUB:fig1} and \ref{SUB:fig2} compare efficiency and accuracy of different methods on $\mathcal{A}$ and $\mathcal{B}$, respectively.

\begin{figure}
	\centering
	\includegraphics[width=7in, height=2.5in]{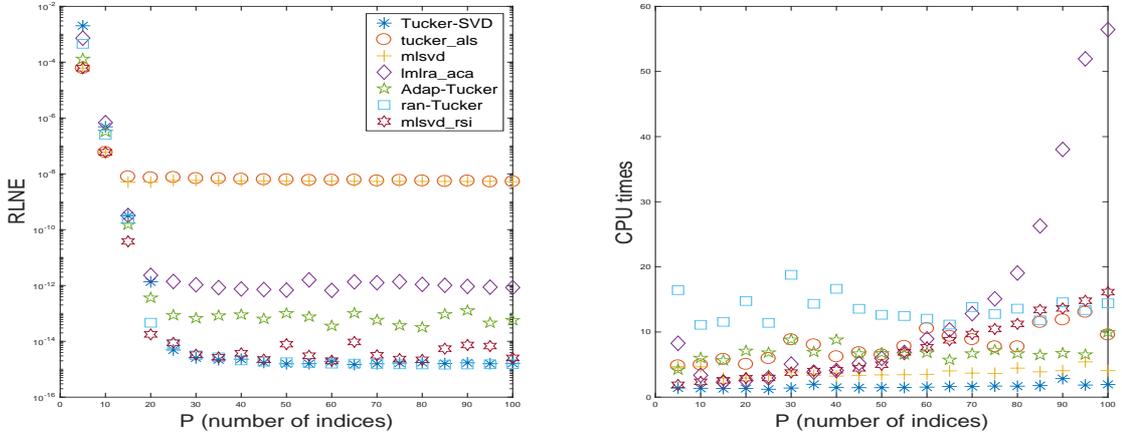}\\
	\caption{Numerical simulation results of applying Tucker-SVD, mlsvd, lmlra\_aca, Adap-Tucker, ran-Tucker and mlsvd\_rsi to $\mathcal{B}$ with $P=5,10,\dots,100$ and $I=400$.}\label{SUB:fig2}
\end{figure}

\subsection{A sparse tensor}

A sparse tensor $\mathcal{A}\in\mathbb{R}^{I\times I\times I}$ is defined as \cite{s_2016_simax,sorensen2016a}
\begin{equation*}
	\mathcal{A}=\sum_{j=1}^{10}\frac{1000}{j}(\mathbf{x}_j
	\circ\mathbf{y}_j\circ\mathbf{z}_j)+
	\sum_{j=11}^I\frac{1}{j}(\mathbf{x}_j
	\circ\mathbf{y}_j\circ\mathbf{z}_j)
\end{equation*}
where $\mathbf{x}_j,\mathbf{y}_j,\mathbf{z}_j\in\mathbb{R}^I$ are sparse vectors with nonnegative entries in MATLAB,
\begin{equation*}
\begin{split}
&\mathbf{x}_j={\rm sprand(I,1,0.015)},\quad\mathbf{y}_j={\rm sprand(I,1,0.025)},\quad \mathbf{z}_j={\rm sprand(I,1,0.035)}.
\end{split}
\end{equation*}
The symbol `$\circ$' represents the vector outer product. Here we assume that $I=400$.
Figure \ref{SUB:fig3} shows the results of RLNE and CPU time for
Tucker-SVD, tucker\_als, mlsvd, lmlra\_aca, Adap-Tucker, ran-Tucker and mlsvd\_rsi used to find a low multilinear rank approximation of $\mathcal{A}$ with different multilinear ranks $\{P,P,P\}$.
\begin{figure}[htb]
	\centering
	\includegraphics[width=7in, height=2.5in]{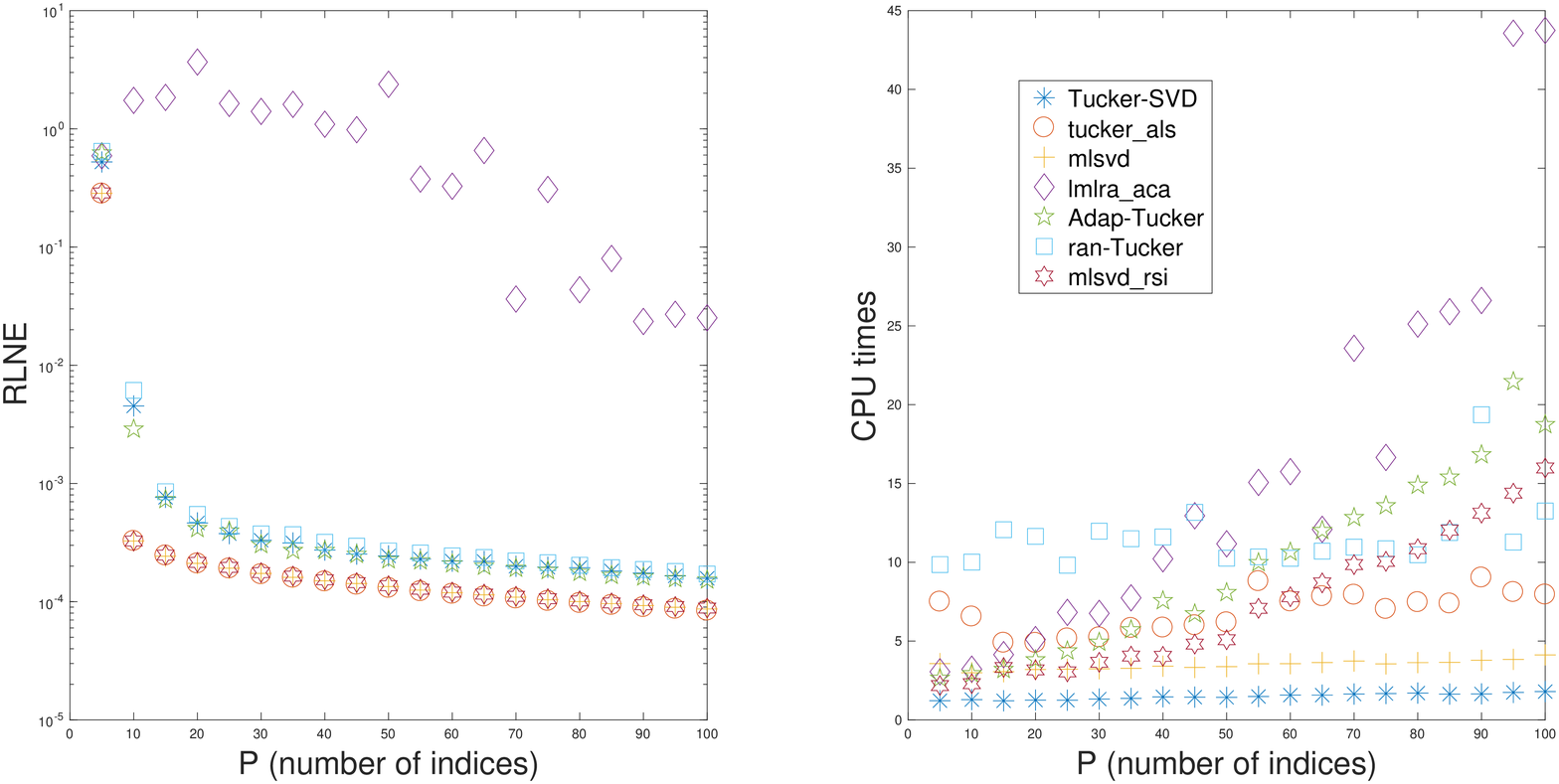}\\
	\caption{Numerical simulation results of applying Tucker-SVD, tucker\_als, mlsvd, lmlra\_aca, Adap-Tucker, ran-Tucker and mlsvd\_rsi to the sparse tensor $\mathcal{A}$ with $P=5,10,\dots,100$ and $I=400$.}\label{SUB:fig3}
\end{figure}

\subsection{Tucker form tensors plus the white noise}
Let $\mathcal{A}\in\mathbb{R}^{I\times I\times I}$ be given in the Tucker form \cite{tensor_cur_2010}
$
	\mathcal{A}=\mathcal{G}\times_1\mathbf{B}_1\times_2\mathbf{B}_2\times_3\mathbf{B}_3
$,
where the entries of $\mathcal{G}\in\mathbb{R}^{100\times 100\times 100}$  and $\mathbf{B}_n\in\mathbb{R}^{I\times 100}\ (n=1,2,3)$ are i.i.d. Gaussian variables with zero mean and unit variance. The form of this test tensor $\mathcal{C}$ is given as $\mathcal{C}=\mathcal{A}+\beta\mathcal{N}$, where $\mathcal{N}\in\mathbb{R}^{I\times I\times I}$ is an unstructured perturbation tensor with different noise level $\beta$. The following signal-to-noise ratio (SNR) measure will be used
\begin{equation*}
	{\rm SNR}\ [{\rm dB}]=10\log\left(\frac{\|\mathcal{B}\|_F^2}{\|\beta\mathcal{N}\|_F^2}\right).
\end{equation*}

The FIT value for approximating the tensor $\mathcal{C}$ is defined by
${\rm FIT}=1-{\rm RLNE}$,
where ${\rm RLNE}$ is given in (\ref{SUB:eqn21}). We assume that $I=400$. We compute a low multilinear rank approximation of $\mathcal{C}$ with the given multilinear rank $\{100,100,100\}$ using Tucker-SVD, tucker\_als, mlsvd, lmlra\_aca, ran-Tucker and mlsvd\_rsi. Figure \ref{SUB:fig4} compares efficiency and accuracy of different methods on $\mathcal{C}$ with different SNR values.

\begin{figure}[htb]
	\centering
	\includegraphics[width=7in, height=2.5in]{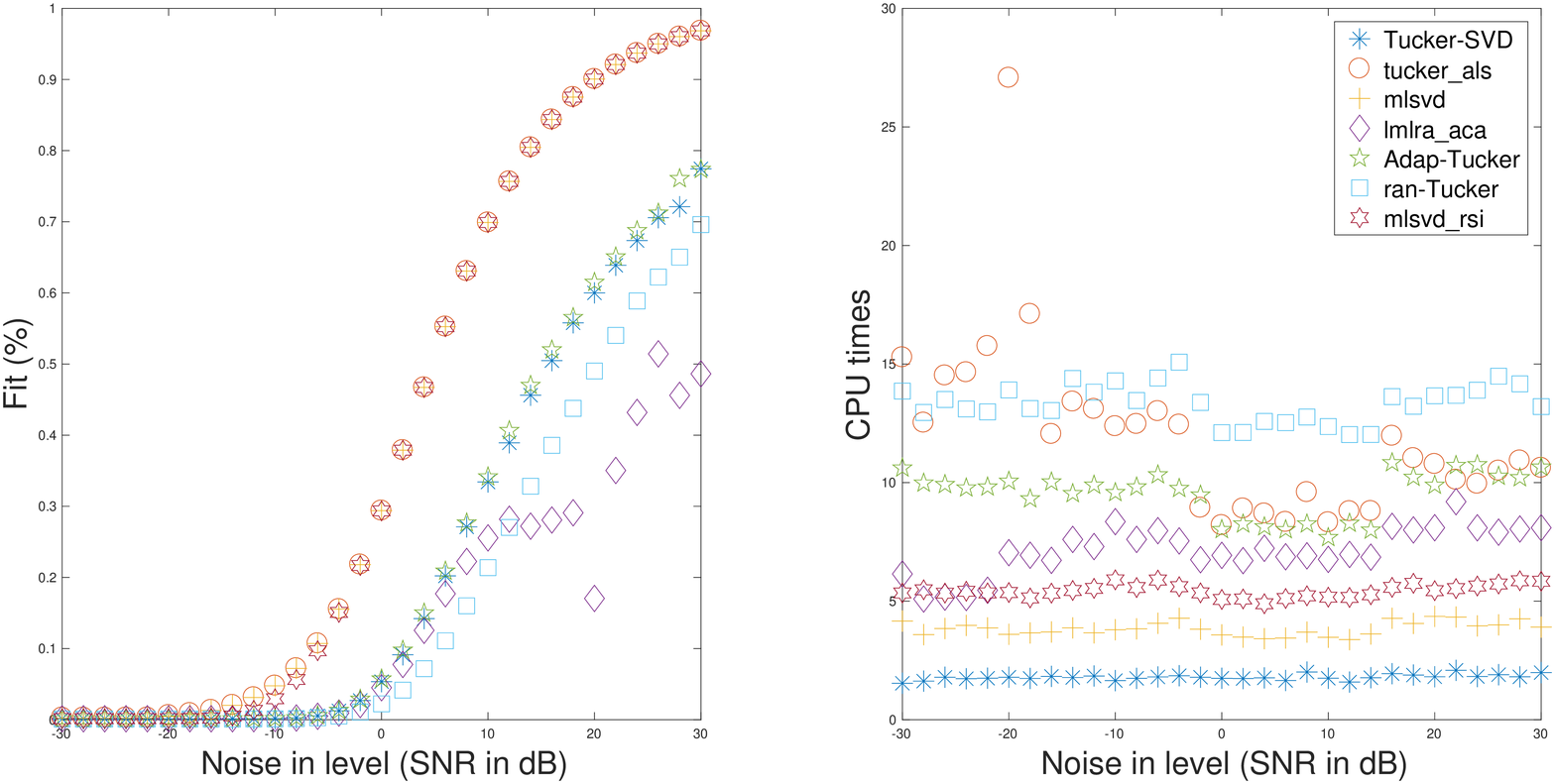}\\
	\caption{Numerical simulation results of applying Tucker-SVD, tucker\_als, mlsvd, lmlra\_aca, Adap-Tucker and mlsvd\_rsi to the sparse tensor $\mathcal{C}$ with different SNRs and $I=400$.}\label{SUB:fig4}
\end{figure}

\begin{remark}
	As shown in Figure {\rm\ref{SUB:fig4}}, for each algorithm, the CPU time of different SNRs is not very different. The reason is that the size of $\mathcal{C}$ is $400\times 400\times 400$ and $P=100$.
\end{remark}

As shown in Figures \ref{SUB:fig1}, \ref{SUB:fig2}, \ref{SUB:fig3} and \ref{SUB:fig4} and in terms of CPU time, Tucker-SVD is the fastest one; in terms of RLNE and FIT, Tucker-SVD is comparable to tucker\_als, mlsvd and mlsvd\_rsi.

\subsection{Handwritten digit classification}

In handwritten digits classification, we train a classifier to classify new unlabeled
images. Savas and Eld\'{e}n \cite{savas2007handwritten} presented two algorithms for handwritten digit classification based on HOSVD. To reduce the training time, Vannieuwenhoven {\it et al.} \cite{vvm_2012_sisc} presented a more efficient ST-HOSVD algorithm. In this section, we compare the performance of Tucker-SVD tucker\_als, mlsvd, ran-Tucker and mlsvd\_rsi on the MNIST database\footnote{The database can be obtained from http://yann.lecun.com/exdb/mnist/.} \cite{lecun1998gradient}, which contains 60,000 training images and 10,000 test images. Here the digit size is $28\times 28$ pixels with the same intensity range. The digit distribution is
given in Table \ref{SUB:tab1}. As seen in Table \ref{SUB:tab1}, The training images are unequally distributed over the ten
classes. Therefore, we restricted the number of training images in every class is less than or equal
to 5421.
\begin{table}
	\centering
	\begin{tabular}{cccccccccccc}
		\hline
		& 0 & 1 & 2 & 3 & 4 & 5 & 6 & 7 & 8 & 9 & Total \\
		\hline
		Train & 5923 & 6742 & 5958 & 6131 & 5842 & 5421 & 5918 & 6265  & 5851 &  5949 & 60000 \\
		\hline
		Test & 940 & 1135 & 1032 & 1010 &  982 &  892 &  958 &  1028 &  974  & 1009  & 10000 \\
		\hline
	\end{tabular}
	\caption{The digit distribution in the MNIST data set.}\label{SUB:tab1}
\end{table}

The training set can be represented by a tensor $\mathcal{A}\in\mathbb{R}^{786\times K\times 10}$, where $K\leq 5421$, this assumption is the same as in \cite{savas2007handwritten}. The first mode is the texel mode. The second mode corresponds to the training images. The third mode corresponds to different classes. Here we use Algorithm 2 in \cite{savas2007handwritten} to handwritten digit classification. We use various algorithms to obtain an approximation $\mathcal{A}\approx\mathcal{G}\times_1 \mathbf{U}\times_2 \mathbf{V}\times_3 \mathbf{W}$ with $\mathcal{G}\in\mathbb{R}^{65\times 142\times 10}$.

For $K=2500$, the related results are summarized in Table \ref{SUB:fig2}. In terms of CPU time, Tucker-SVD is the fastest one. In term of classification accuracy, Tucker-SVD is comparable to Tucker-ALS, mlsvd, Adap-Tucker, ran-Tucker and mlsvd\_rsi.

\begin{remark}
	By using the algorithms in {\rm\cite{savas2007handwritten}} to handwritten digit classification, the factor matrices are orthonormal. Hence we do not use Tucker-RRLU for handwritten digit classification.
\end{remark}
\begin{table}[htb]
	\centering
	\begin{tabular}{cccc}
		\hline
		& TT [sec] & RLNE & CA [\%]  \\
		\hline
		Tucker-SVD & 0.8200 & 0.4468 & 91.49  \\
		\hline
		tucker\_als & 20.0400 & 0.3128 & 93.11  \\
		\hline
		mlsvd & 13.0600 & 0.3140 & 93.18  \\
		\hline
		Adap-Tucker & 1.8900 & 0.4628 & 92.50  \\
		\hline
		ran-Tucker & 44.0500 & 0.4418 & 92.02  \\
		\hline
		mlsvd\_rsi & 3.9700 & 0.4418 & 93.50  \\
		\hline
	\end{tabular}
	\caption{Comparison on handwritten digits classification. Note that `TT' and `AC' denote the training time and classification accuracy, respectively, and floating point numbers in each example have four
		decimal digits.}\label{SUB:tab2}
\end{table}

For different $K$, the results are shown in Figure \ref{SUB:fig5}. From this figure, in terms of running time, Tucker-ALS is the most expensive one; in term of classification accuracy, Tucker-SVD, Tucker-ALS, mlsvd, Adap-Tucker and mlsvd\_rsi are comparable.

\begin{figure}[htb]
	\centering
	\includegraphics[width=7in, height=2.5in]{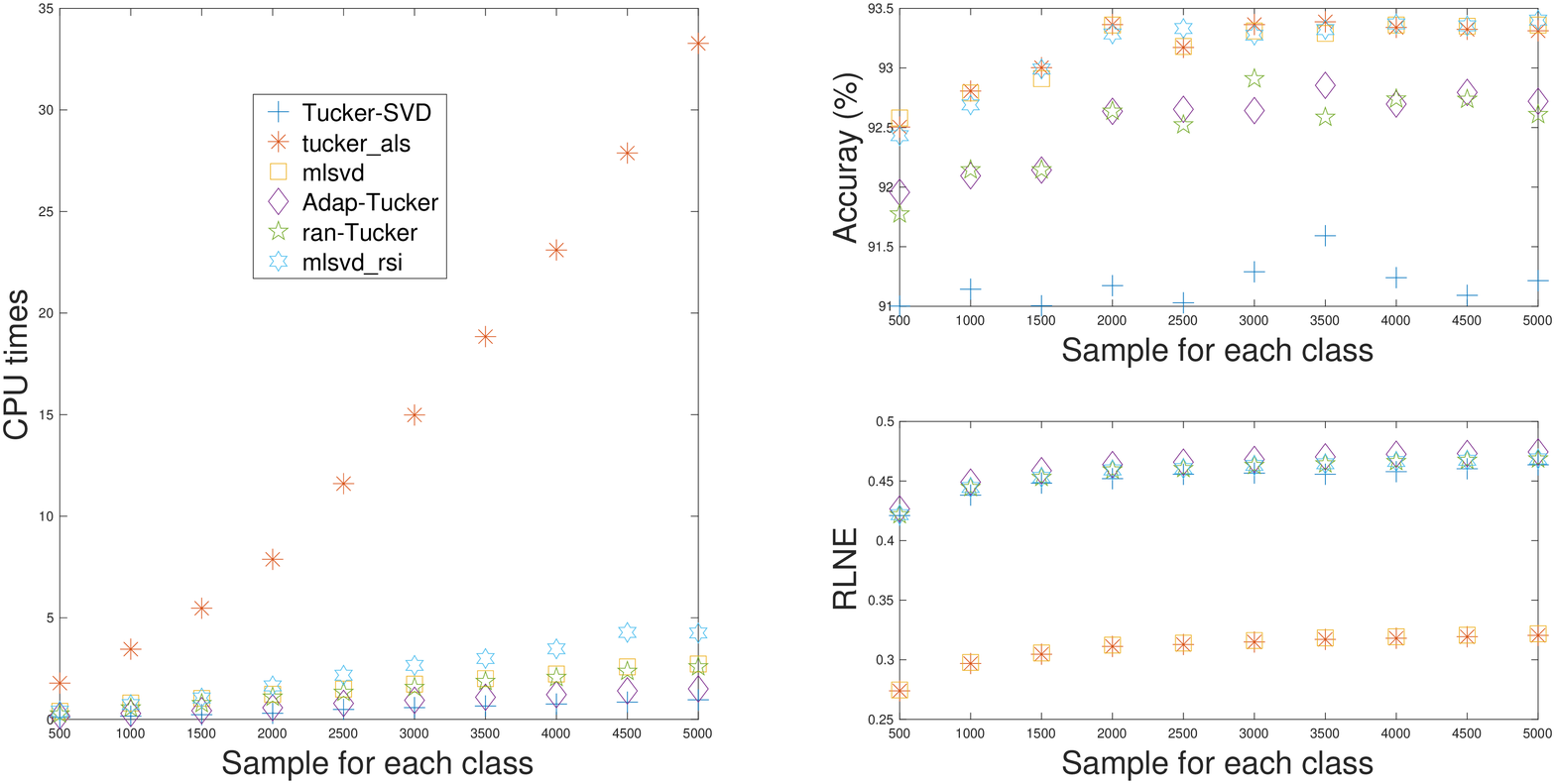}\\
	\caption{Comparison on handwritten digits classification with $K=500,1000,\dots,5000$.}\label{SUB:fig5}
\end{figure}
\subsection{Generalization for the case of $N=4$}

For the given multilinear rank $\{\mu_1,\mu_2,\mu_3,\mu_4\}$ of $\mathcal{A}\in\mathbb{R}^{I_1\times I_2\times I_3\times I_4}$, the generalization of Algorithm \ref{SUB:alg8} is summarized in the following algorithm. Without loss of generality, Algorithm \ref{SUB:alg9} is also denoted as Tucker-SVD.
\begin{algorithm}[htb]
	\caption{The proposed randomized algorithm for low multilinear rank approximations with $N=4$}
	\label{SUB:alg9}
	\begin{algorithmic}[1]
		\STATEx {\bf Input}: A tensor $\mathcal{A}\in \mathbb{R}^{I_1\times I_2\times I_3\times I_4}$ to decompose, the desired multilinear rank $\{\mu_1,\mu_2,\mu_3,\mu_4\}$, $L_{4,1}L_{4,2}L_{4,3}\geq\mu_4+K$, $L_{3,1}L_{3,2}L_{3,4}\geq\mu_3+K$, $L_{2,1}L_{2,3}L_{2,4}\geq\mu_2+K$, $L_{1,2}L_{1,3}L_{1,4}\geq\mu_1+K$ number of columns to use, and a processing order $\mathbf{p}\in\mathbb{S}_4$, where $K$ is a oversampling parameter.
		\STATEx {\bf Output}: Four orthonormal matrices $\mathbf{Q}_n$ such that $\|\mathcal{A}\times_1 (\mathbf{Q}_1\mathbf{Q}_1^\top)\times_2 (\mathbf{Q}_2\mathbf{Q}_2^\top)
		\times_3 (\mathbf{Q}_3\mathbf{Q}_3^\top)\times_4 (\mathbf{Q}_4\mathbf{Q}_4^\top)-\mathcal{A}\|_F\leq
		\sum_{n=1}^4O(\Delta_{\mu_n+1}(\mathbf{A}_{(n)}))$.
		\STATE Set  the temporary tensor: $\mathcal{C}=\mathcal{A}$.
		\FOR {$n=p_1,p_2,p_3,p_4$}
		\STATE Form three real matrices $\mathbf{G}_{n,m}\in\mathbb{R}^{L_{n,m}\times I_m}$ whose entries are i.i.d. Gaussian random variables of zero mean and unit variance, where $m=1,2,3,4$ and $m\neq n$.
		\STATE Compute the product tensor
		\begin{equation*}
			\mathcal{B}_n=\mathcal{C}\times_1\mathbf{G}_{n,1}\dots\times_{m-1}\mathbf{G}_{n,m-1}
			\times_{m+1}\mathbf{G}_{n,m+1}\dots\times_4\mathbf{G}_{n,4}.
		\end{equation*}
		\STATE Form the mode-$n$ unfolding $\mathbf{B}_{n,(n)}$ of the tensor $\mathcal{B}_n$.
		\STATE For the $\mathbf{B}_{n,(n)}$, find a real $I_n\times \mu_n$ matrix $\mathbf{Q}_n$ whose columns are orthonormal, such that there exists a real $\mu_n\times \prod_{m=1,m\neq n}^4L_{n,m}$ matrix $\mathbf{S}_n$ for which
		$$\|\mathbf{Q}_n\mathbf{S}_n-\mathbf{B}_{n,(n)}\|_2\leq\sigma_{\mu_n+1}(\mathbf{B}_{n,(n)}).$$
		\STATE Set $I_n=\mu_n$ and $\mathbf{Q}_n=\mathbf{Q}_n(:,1:\mu_n)$.
		\STATE Compute $\mathcal{C}=\mathcal{C}\times_n\mathbf{Q}_n^\top$.
		\ENDFOR
	\end{algorithmic}
\end{algorithm}

\begin{figure}
	\centering
	\includegraphics[width=7in, height=2.5in]{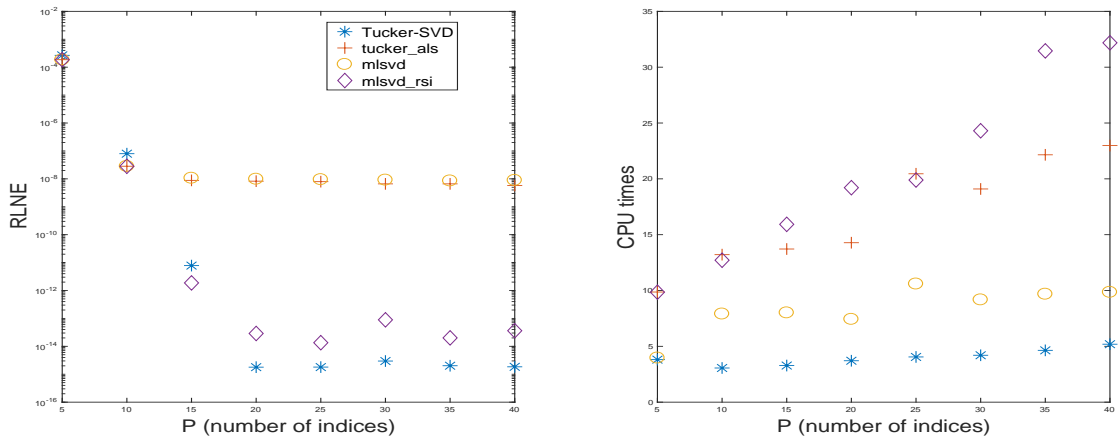}\\
	\caption{Numerical simulation results of applying Tucker-SVD, tucker\_als, mlsvd and mlsvd\_rsi to $\mathcal{A}$ with $P=5,10,\dots,40$ and $I=100$.}\label{SUB:fig1add}
\end{figure}

For a given low multilinear rank approximation $\widehat{\mathcal{A}}=\mathcal{A}\times_{1}({\bf S}_{1}{\bf S}_{1}^\top)\times_{2}({\bf S}_{2}{\bf S}_{2}^\top)\times_{3}({\bf S}_{3}{\bf S}_{3})^\top\times_{4}({\bf S}_{4}{\bf S}_{4}^\top)$ of $\mathcal{A}\in\mathbb{R}^{I_1\times I_2\times I_3\times I_4}$, where the matrices ${\bf S}_n\in\mathbb{R}^{I\times \mu}$ are derived form the desired numerical algorithms, its relative least normalized error (RLNE) is defined as
\begin{equation*}
	{\rm RLNE}=\|\mathcal{A}-\widehat{\mathcal{A}}\|_F/\|\mathcal{A}\|_F.
\end{equation*}

Now we consider the first test tensor generated by sampling a smooth function as follows
\begin{equation*}
	a_{ijkl}=\frac{1}{i+j+k+l},
\end{equation*}
with $i,j,k,l=1,2,\dots,I$.

Suppose that $I=100$. We compute a low multilinear rank approximation of $\mathcal{A}$ with multilinear rank $\{P,P,P,P\}$ using Tucker-SVD, tucker\_als, mlsvd and mlsvd\_rsi, respectively.

Figure \ref{SUB:fig1add} compares efficiency and accuracy of different methods on $\mathcal{A}$. In terms of CPU time, Tucker-SVD is the fastest; in terms of RLNE, Tucker-SVD is comparable to mlsvd\_rsi.

Another test tensor $\mathcal{B}\in\mathbb{R}^{I\times I\times I}$ is a sparse tensor, which is defined as \cite{s_2016_simax,sorensen2016a}
\begin{equation*}
	\mathcal{B}=\sum_{j=1}^{10}\frac{1000}{j}(\mathbf{x}_j
	\circ\mathbf{y}_j\circ\mathbf{z}_j\circ\mathbf{w}_j)+
	\sum_{j=11}^I\frac{1}{j}(\mathbf{x}_j
	\circ\mathbf{y}_j\circ\mathbf{z}_j\circ\mathbf{w}_j)
\end{equation*}
where $\mathbf{x}_j,\mathbf{y}_j,\mathbf{z}_j,\mathbf{w}_j\in\mathbb{R}^I$ are sparse vectors with nonnegative entries. In MATLAB,
\begin{equation*}
\begin{cases}
\mathbf{x}_j={\rm sprand(I,1,0.015)},\quad\mathbf{y}_j={\rm sprand(I,1,0.025)},\\
\mathbf{z}_j={\rm sprand(I,1,0.035)},\quad\mathbf{w}_j={\rm sprand(I,1,0.045)}.
\end{cases}
\end{equation*}
Here we assume that $I=100$.

  Figure \ref{SUB:fig2add} shows three results of RLNE and CPU time for Tucker-SVD, tucker\_als, mlsvd, and mlsvd\_rsi used to find a low multilinear rank approximation of $\mathcal{B}$ with multilinear rank $\{P,P,P,P\}$.  In terms of CPU time, Tucker-SVD is the fastest and in terms of RLNE, Tucker-SVD is  comparable to tucker\_als, mlsvd, and mlsvd\_rsi.
\begin{figure*}[htb]
	\centering
	\includegraphics[width=7in, height=2.5in]{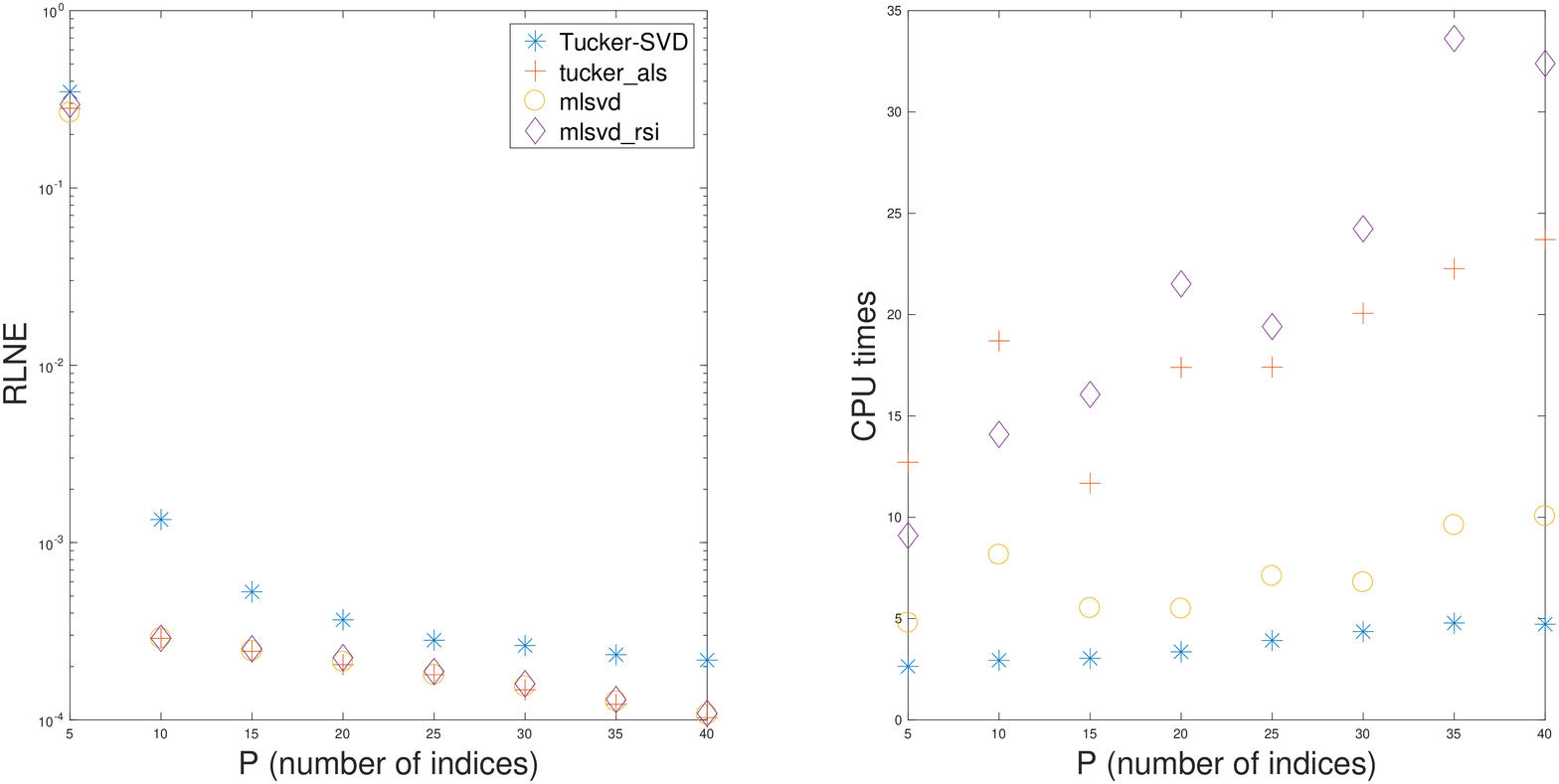}\\
	\caption{Numerical simulation results of applying Tucker-SVD, tucker\_als, mlsvd, and mlsvd\_rsi to the sparse tensor $\mathcal{B}$ with $P=5,10,\dots,40$ and $I=100$.}\label{SUB:fig2add}
\end{figure*}

\section{Conclusion and discussion}

\label{SUB:sect7}
In this paper, based on the SVD and random projections, we propose a randomized algorithm Tucker-SVD for low multilinear rank approximations of tensors. Numerical examples illustrate that Tucker-SVD is fastest in terms of CPU time and the low multilinear rank approximation derived by Tucker-SVD can be used as a criterion for judging the merits and demerits of other algorithms. The error bound in Theorem \ref{SUB:thm6} is a rough estimation. Improving this bound would be an interesting topic. Numerical examples illustrate that in terms of RLNE, Tucker-SVD is worse than these algorithms in some cases. In order to reduce RLNE obtained by Tucker-SVD, Che {\it et al.} \cite{che2018randomized1} obtain another randomized algorithm for solving Problem \ref{SUB:prob1} by combining Tucker-SVD and power scheme.

Che and Wei \cite{che2018randomized} consider the adaptive randomized algorithm for the approximate tensor train decomposition. One of the future considerations is to design more effective randomized algorithms for the approximate tensor train decomposition, based on the idea of the proposed algorithms in this paper. The tensor train structure is a special case of the Hierarchical Tucker decomposition. Our second  consideration is to design randomized algorithms for the Hierarchical Tucker approximation of tensors.

{\small
\bibliographystyle{siam}
\bibliography{random_algorithm_for_tucker_decomposition}
}
\end{document}